\documentclass[11pt]{amsart}

\usepackage[english]{babel}
\usepackage{hyperref}
\usepackage{amsmath}
\usepackage{amsfonts}
\usepackage{amssymb}
\usepackage{amsthm}
\usepackage{thmtools}
\usepackage{color}
\usepackage{tikz}
\usepackage{tikz-cd}
\usepackage{tikz-3dplot}
\usepackage{enumitem}
\usetikzlibrary{babel, intersections, decorations.markings, decorations.pathreplacing}

\allowdisplaybreaks

\definecolor{dunkelrot}{RGB}{210,30,30}

\setlist[enumerate,1]{label={\arabic*.}}

\declaretheorem[name=Theorem, style=plain, numberwithin=section]{satz}
\declaretheorem[name=Lemma, style=plain, sibling=satz]{lem}
\declaretheorem[name=Corollary, style=plain, sibling=satz]{kor}

\declaretheorem[name=Theorem, style=plain, numbered=no]{satzA}

\declaretheorem[name=Definition, style=definition, qed=$\blacktriangle$, sibling=satz]{defi}

\declaretheorem[name=Remark, style=definition, qed=$\blacklozenge$, sibling=satz]{bem}

\newcommand{\N}{\mathbb{N}}
\newcommand{\Z}{\mathbb{Z}}

\newcommand{\R}{\mathbb{R}}

\renewcommand{\S}{\mathbb{S}}

\newcommand{\Vol}{\operatorname{Vol}}

\renewcommand{\epsilon}{\varepsilon}

\newcommand{\Isom}{\operatorname{Isom}}
\newcommand{\id}{\operatorname{id}}

\newcommand{\tors}{\operatorname{tors}}

\newcommand{\K}{\mathbb{K}}
\renewcommand{\H}{\mathbb{H}}

\newcommand{\FIX}{\operatorname{Fix}}

\newcommand{\arsinh}{\operatorname{arsinh}}
\newcommand{\artanh}{\operatorname{artanh}}

\begin{document}

\title{Homology bounds for hyperbolic orbifolds}

\author{Hartwig Senska}
\address{Karlsruhe Institute of Technology, Germany}
\email{hartwig.senska@alumni.kit.edu}

\begin{abstract}
We will provide bounds on both the Betti numbers and the torsion part of the homology of hyperbolic orbifolds. These bounds are linear in the volume and are a direct consequence of an efficient simplicial model of the thick part, which we will construct as well. The homology statements complement previous work of Bader, Gelander and Sauer (torsion homology of manifolds), Samet (Betti numbers of orbifolds) and a classical theorem of Gromov (Betti numbers of manifolds).

For arithmetic, non-compact hyperbolic orbifolds -- i.e. in the case of arithmetic, non-uniform lattices in $\Isom(\H^n)$ -- the strongest results will be obtained.
\end{abstract}
\maketitle

\section{Introduction}

An interesting feature of negative curvature is that the topology (e.g. in terms of the homology) of sufficiently well-behaved spaces like manifolds or orbifolds can in some sense be controlled by their volume. The following result of Gromov \cite{BGS} can be seen as a starting point for studies into this theme: for a Hadamard $n$-manifold $X$ with pinched negative sectional curvature\footnote{In the analytic case, this extends to non-positive curvature $-1 \leq K \leq 0$, given that there are no Euclidean de Rham factors in $X$.} $-1 \leq K \leq a < 0$ (for some $a<0$) and a torsion-free lattice $\Gamma < \Isom(X)$, the Betti numbers -- that is, the free part of the homology -- of the quotient manifold $X\slash\Gamma$ is linearly bounded by the volume, i.e.
\[
b_k(X\slash\Gamma; \K) \leq C \cdot \Vol(X\slash\Gamma).
\]
This holds for all degrees $k=0,\ldots,n$ and arbitrary coefficient fields $\K$. Here, $C = C(n) > 0$ is a constant depending only on the dimension $n$.

Using a similar Morse theoretic argument to the one of Gromov, Samet \cite{Samet} showed that this can be extended to general lattices $\Gamma < \Isom(X)$, i.e. where $X\slash\Gamma$ is an orbifold and not necessarily a manifold. Again, the Betti numbers of the quotient orbifold satisfy
\[
b_k(X\slash\Gamma; \K) \leq C \cdot \Vol(X\slash\Gamma),
\]
where $k=0,\ldots,n$; here, the coefficient field $\K$ has to have characteristic $0$. Moreover, $C = C(n, \eta) > 0$ will no longer depend only on the dimension $n$, but also on an upper bound $\eta$ on the order of finite subgroups of $\Gamma$ (which always exists), i.e. $|G| \leq \eta$ for all $G < \Gamma$ finite\footnote{This extra dependency on $\eta$ is not stated explicitly in the paper \cite{Samet}, but it can be seen that the proof indeed uses $\eta$. Moreover, there are geometric reasons that strongly suggest that it won't be possible to dispose of this additional assumption.}.

As the Betti numbers constitute only one part of the homology, it is a natural question whether the other part -- the torsion in the homology -- might admit similar bounds. Bader, Gelander and Sauer \cite{Sauer} settled this question positively for the case of negatively pinched manifolds (i.e. torsion-free latices again), showing that
\[
\log |\tors H_k(X\slash\Gamma; \Z)| \leq C \cdot \Vol(X\slash\Gamma)
\]
holds for all $k=0,\ldots,n$, where $C = C(n) > 0$ is a constant depending only on the dimension $n$. As a special case, the statement for degree $k=1$ in dimension $n=3$ has to be exlcuded; using Dehn surgery, \cite{Sauer} gives an explicit counterexample for that situation. Unlike Gromov \cite{BGS} and Samet \cite{Samet}, Bader, Gelander and Sauer \cite{Sauer} don't employ Morse theory to prove the statement above; instead, they construct an efficient simplical decomposition of the thick part of $X\slash\Gamma$, which yields the torsion homology result as a direct consequence. This decomposition would also imply another proof for Gromov's theorem above. The curvature conditions in \cite{Sauer} can be relaxed to negatively curved visibility manifolds, as was shown by the author of the present paper in \cite{SenskaVis}.

We will provide answers for the remaining case of torsion homology for orbifolds (i.e. general, not necessarily torsion-free lattices) in the hyperbolic setting. Similar to \cite{Sauer}, we will achieve this by first constructing an efficient simplical decomposition of the thick part of the orbifold. To fix notation, $\Gamma < \Isom(\H^n)$ will denote a lattice, $\eta \in \N$ an upper bound on the order of finite subgroups of $\Gamma$ and $\nu > 0$ a lower bound on the displacement of hyperbolic elements\footnote{In the manifold setting, this would be equivalent to a lower bound on the length of geodesic loops in $M$; the interpretation is only slightly more complicated in our orbifold situation.} of $\Gamma$ (both $\eta$ and $\nu$ always exist). We let $M := \H^n\slash\Gamma$ denote the quotient orbifold and write $M_+$ for its thick part. Our main result now states:

\begin{satzA}[see Theorem \ref{Satz:Hauptresultat_beschraenkt_komplizierter_Simplizialkomplex}]
There are constants $C = C(n,\eta,\nu) > 0$ and $D = D(n,\nu) > 0$, such that for any such orbifold $M$, the pair $(M_+, \partial M_+)$ -- i.e. the thick part and its boundary -- is as a pair homotopy equivalent to a simplicial pair $(S, S')$, where the number of vertices of $S$ is bounded by $C \cdot \Vol(M)$ and the degree at the vertices of $S$ is universally bounded by $D$. For arithmetic, non-uniform $\Gamma$, the constants $C$ and $D$ will only depend on the dimension $n$.
\end{satzA}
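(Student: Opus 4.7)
The plan is to adapt the nerve construction of Bader--Gelander--Sauer \cite{Sauer} from the manifold to the orbifold setting: cover $M_+$ by metric balls of a fixed small radius, take the nerve to produce $S$, and invoke a nerve theorem to identify $|S|$ with $M_+$ up to homotopy. The pair structure $(S, S')$ will arise from a distinguished subfamily of the cover that hits a collar of $\partial M_+$.

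First I would fix a radius $r = r(n, \nu) > 0$ smaller than both $\nu/3$ and the $n$-dimensional Margulis constant, pick a maximal $r/2$-separated net $\{x_1, \ldots, x_N\} \subset M_+$, and consider the cover $\mathcal{U} = \{B(x_i, r)\}_{i=1}^N$. The vertex bound is a volume-packing estimate: the half-balls $B(x_i, r/2)$ are pairwise disjoint in $M$, and each carries orbifold volume at least $\Vol(B_{\H^n}(0, r/2))/\eta$ because the local isotropy group at $x_i$ has order at most $\eta$. Summing gives $N \leq C(n, \eta, \nu) \cdot \Vol(M)$. For the degree bound, $B(x_i, r) \cap B(x_j, r) \neq \emptyset$ forces $d(x_i, x_j) \leq 2r$, and the number of $r/2$-separated points in a ball of radius $2r$ in $\H^n$ is bounded by $D(n, r) = D(n, \nu)$ via the standard packing estimate; crucially, $r$ was chosen independently of $\eta$.

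The main obstacle is verifying the hypothesis of the nerve theorem, namely that every non-empty multi-intersection $\bigcap_{i \in I} B(x_i, r)$ is contractible as an orbifold. Lifts to $\H^n$ of such intersections are convex and hence contractible in the usual sense; the subtlety is that in the quotient the local isotropy may act non-trivially. Because $r$ is smaller than the Margulis constant, one arranges that each lifted intersection is acted upon by a single finite subgroup of $\Gamma$ (of order at most $\eta$), and that this subgroup fixes an interior point of the lift. The resulting quotient is then a cone on a spherical quotient and is therefore contractible. An orbifold version of the nerve theorem for good open covers then produces the homotopy equivalence $|S| \simeq M_+$.

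For the pair structure, I would additionally require $\mathcal{U}$ to contain a distinguished subfamily $\mathcal{U}'$ whose union is a collar neighborhood of $\partial M_+$, and let $S'$ be the subcomplex of $S$ given by the nerve of $\mathcal{U}'$; the pair version of the nerve theorem then yields $(|S|, |S'|) \simeq (M_+, \partial M_+)$, and the vertex and degree bounds automatically restrict to $S'$. In the arithmetic non-uniform case, classical estimates—bounds on torsion in arithmetic lattices, together with uniform systole bounds arising from number-theoretic rigidity—provide $\eta = \eta(n)$ and $\nu = \nu(n)$, making $C$ and $D$ functions of $n$ alone.
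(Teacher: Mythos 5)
The overall strategy (nerve of a cover, packing estimates for the vertex and degree bounds, arithmetic rigidity for the uniform constants) is the right outline, but the central step of your argument is incorrect as stated, and it is precisely the step the paper has to work hardest on.

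You claim that with a fixed small radius $r$ and a maximal $r/2$-separated net, ``one arranges that each lifted intersection is acted upon by a single finite subgroup of $\Gamma$ (of order at most $\eta$), and that this subgroup fixes an interior point of the lift.'' This is not something you can arrange with an \emph{arbitrary} maximal net, and it is false in general. Consider the model example $M = \H^2/\langle\gamma\rangle$ with $\gamma$ a rotation of small angle about $p$: a ball $B(x,r)$ whose center $x$ lies \emph{near} $p$ but not \emph{at} $p$ has a convex lift, but the rotation does not fix any point of that lift, and the image in $M$ is homotopy equivalent to $\S^1$ no matter how small $r$ is. A maximal separated net has no reason to avoid such positions or to place centers exactly on fixed-point sets, so the cover you build need not be a good cover, and the nerve theorem does not apply. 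This is exactly why the paper (following Samet) uses a \emph{stratified} choice of centers: points of $\mathcal{D}_i$ lie on the $i$-dimensional singular stratum $\pi(S_i)$, at controlled distance from lower strata, with radii $\mu_i$ chosen iteratively so that Lemmas \ref{Samet4.6} and \ref{Samet4.7} force the relevant finite groups to fix the centers, and the resulting balls are \emph{foldable} in the sense of Lemma \ref{Lem:Durchschnitt_Kugeln_faltbar}. The contractibility of the intersections is then a genuine theorem about folded sets, not a consequence of $r$ being small.

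The second gap concerns the pair structure. You want $\mathcal{U}'$ to be ``a distinguished subfamily whose union is a collar neighborhood of $\partial M_+$,'' but with ordinary balls placed by a net this subfamily's union will generically both overshoot $M_+$ and leave gaps near $\partial M_+$, and the union of $\mathcal{U}$ will not be homotopy equivalent to $M_+$ (only to some open set containing it). The paper resolves this by introducing \emph{stretched balls} along flow geodesics toward the cusps and proving that the resulting cover is stable under the flow (Lemmas \ref{Lem:Laenge_Streckung}, \ref{Lem:Ueberdeckung_ist_stabil}), that stretched balls and their intersections are foldable (Lemmas \ref{Lem:Parabolisch_gestreckte_Kugeln_faltbar}, \ref{Lem:Durchschnitt_parabolisch_gestreckte_Kugeln_faltbar}), and that taking $\mathcal{B}_g$ to be exactly the stretched balls yields the commutative square of Lemma \ref{Lem:Kommutatives_Diagramm_M_+_N_+} needed for the pair statement. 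None of this is automatic, and it is the bulk of the proof; your proposal essentially assumes it. The packing/counting parts of your argument, and the use of Gelander's bounds to obtain $\eta(n)$ and $\nu(n)$ in the arithmetic non-uniform case, are correct and do match the paper (Lemmas \ref{Lem:Ueberdeckung_Anzahl_und_Schnitte} and \ref{Lem:Arithmetisch_nicht-uniform_gute_Konstanten}), but they are the easy parts.
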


By a straightforward Mayer-Vietoris argument, our simplicial decomposition yields another proof for the linear bounds on the Betti numbers as a byproduct.

\begin{satzA}[see Theorem \ref{Satz:Freie_Homologie_Schranke}]
There is a constant $E = E(n, \eta, \nu) > 0$ such that for any such orbifold $M$, we have
\[
b_k(M;\K) \leq E \cdot \Vol(M)
\]
for all $k \in \N_0$ and arbitrary coefficient field $\K$. For arithmetic, non-uniform $\Gamma$, the constant $E$ will only depend on the dimension $n$.
\end{satzA}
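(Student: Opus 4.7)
The strategy is to combine the simplicial model of $(M_+, \partial M_+)$ from the preceding theorem with a Mayer--Vietoris argument for the decomposition $M = M_+ \cup M_-$. After a standard thickening of the two pieces so that they become open and meet in a collar of $\partial M_+$, the associated long exact sequence yields
\[
b_k(M;\K) \leq b_k(M_+;\K) + b_k(M_-;\K) + b_{k-1}(\partial M_+;\K)
\]
for every degree $k$ and every coefficient field $\K$. It therefore suffices to bound each of these three summands linearly in $\Vol(M)$.

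For $M_+$ and its boundary I would pass to the simplicial pair $(S,S')$ supplied by the previous theorem. Since $S$ has at most $C \cdot \Vol(M)$ vertices and every vertex has degree at most $D$, each vertex lies in boundedly many simplices of any fixed dimension; hence the number of $k$-simplices of $S$ (and of the subcomplex $S'$) is at most $C_k \cdot \Vol(M)$, which gives the same bound for $b_k(M_+;\K) = b_k(S;\K)$ and $b_k(\partial M_+;\K) = b_k(S';\K)$ with $C_k = C_k(n,\eta,\nu)$.

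For $M_-$ I would appeal to the standard thick--thin structure: each connected component is either a Margulis tube around a closed (possibly singular) geodesic or a cusp neighborhood whose cross-section is a closed flat $(n-1)$-orbifold. Every such component deformation retracts onto a finite orbi-CW complex whose Betti numbers are uniformly bounded by a constant depending only on $n$ and $\eta$ (using Bieberbach's classification and the bound on point groups for the cusps, and a direct analysis for the tubes). The number of components of $M_-$ is in turn linear in $\Vol(M)$: the displacement lower bound $\nu$ forces every tube core to contribute at least a definite quantum of volume, and each cusp contains a Margulis-lemma volume quantum as well. Together these estimates produce $b_k(M_-;\K) \leq E' \cdot \Vol(M)$, and Mayer--Vietoris then delivers the asserted bound.

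The principal nontrivial input is already encoded in the simplicial model theorem, so the proof above is essentially bookkeeping. The main obstacle that remains is keeping the constants uniform: in particular, for the arithmetic non-uniform case one has to verify that the displacement lower bound and the order of finite subgroups can be replaced by universal constants depending only on $n$ (by arithmeticity and reduction theory), and that the cusp cross-sections admit Betti-number bounds depending only on $n$, so that the final constant $E$ inherits the same dependence as in the preceding theorem.
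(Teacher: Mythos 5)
Your argument is correct, but it takes a noticeably different route than the paper. The crucial simplification you are missing is spelled out in the remark following Corollary \ref{Kor:Dick_duenn_Zerlegung_Orbifaltigkeit_gute_Niveaus}: because the Margulis constant is taken to be smaller than $\nu$ throughout, the thin part $M_-$ consists of cusps only, each of which strong deformation retracts onto its boundary component in $\partial M_+$ (Theorem \ref{Satz:Dick_duenn_Zerlegung_Orbifaltigkeit}, point 4), so $M \simeq M_+ \simeq N(\mathcal{B})$ outright. Consequently the paper's ``Mayer--Vietoris argument'' (cited from \cite{SenskaVis} Theorem 4.11) is not run on the decomposition $M = M_+ \cup M_-$ as you propose; it is the standard simplex-counting argument for a complex of bounded vertex degree, applied to the pair $(N(\mathcal{B}), N(\mathcal{B}_g))$, and it already yields the constant $E = (D^{n-1} + D^n + 1)\cdot C$. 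Your route instead forces you to estimate $b_k(M_-;\K)$ directly, which brings in a tube analysis that is vacuous here (there are no tubes once $\epsilon < \nu$) and a separate Bieberbach-based bound for the Betti numbers of cusp cross-sections; the latter is fine in the hyperbolic case, where cross-sections are compact flat $(n-1)$-orbifolds and finiteness per dimension holds, but it would be the first thing to break if one tried to transport the argument to pinched negative curvature (infranil cross-sections). You could shorten your proof considerably by observing that $M_- \simeq \partial M_+$ via the cusp retraction, so all three Mayer--Vietoris terms are already controlled by the simplicial pair $(S,S')$, and the separate geometric analysis of $M_-$ is unnecessary. Both approaches produce the same linear bound in $\Vol(M)$ with the same dependence of $E$ on $(n,\eta,\nu)$, and the arithmetic non-uniform improvement is correctly attributed to Lemma \ref{Lem:Arithmetisch_nicht-uniform_gute_Konstanten}.
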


While Samet's \cite{Samet} result above already provided a linear bound for the Betti numbers even under more general curvature assumptions, it is restricted to coefficient fields of characteristic $0$; our statement has the advantage that it is valid for arbitrary coefficients.

The application of central interest is the torsion of the homology, where we will show:

\begin{satzA}[see Theorem \ref{Satz:Torsion_Homologie_Schranke}]
There is a constant $F = F(n, \eta, \nu) > 0$ such that for any such orbifold $M$, we have
\[
\log | \tors H_k(M;\Z) | \leq F \cdot \Vol(M)
\]
for all $k \in \N_0$. For arithmetic, non-uniform $\Gamma$, the constant $F$ will only depend on the dimension $n$.
\end{satzA}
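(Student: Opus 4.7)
Proof plan. The strategy is to promote the simplicial model $(S, S')$ of $(M_+, \partial M_+)$ furnished by Theorem~A to a simplicial model $\tilde{S}$ of the entire orbifold $M$, and then apply the Hadamard--Soul\'e torsion estimate (as used in \cite{Sauer}) directly to the chain complex $C_*(\tilde{S}; \Z)$. A naive Mayer--Vietoris argument at the homology level is insufficient, because the torsion of the cokernels appearing in the long exact sequence cannot be controlled purely by the torsions of the neighbouring terms; working at the chain level circumvents this difficulty.

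From Theorem~A one obtains $(S, S')$ with $|V(S)| \leq C \cdot \Vol(M)$ and vertex degree universally bounded by $D$, so that $S$ (and $S'$) has at most $C_1 \cdot \Vol(M)$ simplices in total. It remains to describe the thin part $M_-$ combinatorially. By the Margulis lemma each connected component of $M_-$ is either a cuspidal end or a tubular neighbourhood of a short orbifold-geodesic. Since $\Gamma$ is a lattice, each cuspidal cross-section is a closed flat $(n-1)$-orbifold of the form $T^{n-1}/G$ with $|G| \leq \eta$; the finiteness (up to conjugacy) of finite subgroups of $GL_{n-1}(\Z)$, combined with Bieberbach's theorems, implies that only finitely many topological types of cross-sections occur, depending solely on $n$ and $\eta$. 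An analogous statement holds for Margulis tubes, which are quotients of standard tubes by finite groups of order $\leq \eta$.

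One then chooses, once and for all, a standard simplicial model of bounded complexity (with a fixed boundary triangulation) for each topological type, and glues these models to $S$ along $S'$ to form $\tilde{S} \simeq M$. The number of Margulis components is at most $C_2 \cdot \Vol(M)$ since each carries a definite amount of volume (a Margulis-type lower bound in terms of $n, \eta, \nu$), and each contributes a bounded number of simplices; hence $\tilde{S}$ has at most $C_3 \cdot \Vol(M)$ simplices with uniformly bounded vertex degree.

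Finally, in the simplicial bases the boundary operators of $C_*(\tilde{S}; \Z)$ have entries in $\{-1, 0, +1\}$ with uniformly bounded column support. The product of the invariant factors of any boundary operator divides a maximal non-zero minor, which by Hadamard's inequality is bounded by a universal constant to the power of the rank; this yields $\log|\tors H_k(M;\Z)| \leq F \cdot \Vol(M)$. The arithmetic, non-uniform case is analogous: the thin part then consists only of arithmetic cusps whose combinatorial structure is controlled purely by $n$ (reduction theory), matching the dimensional dependence of the constants of Theorem~A. The main obstacle is ensuring that the restriction of $S'$ to each boundary component of $M_+$ matches the chosen boundary triangulation of the corresponding standard model --- a compatibility that is to be either built into the construction of $(S,S')$ in Theorem~A or achieved by a bounded-geometry subdivision argument that preserves all the relevant bounds.
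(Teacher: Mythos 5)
Your overall strategy --- pass from a bounded-complexity simplicial model to a torsion bound via Hadamard's inequality applied to the integral boundary matrices --- is exactly the mechanism the paper invokes; this is precisely the content of \cite{Sauer} Lemma~5.2, and the paper quotes that lemma as a black box. Where your proposal diverges, and where it introduces genuine difficulties, is the intermediate step of gluing a separate simplicial model of the thin part onto $(S,S')$ to obtain a model $\tilde S$ of $M$. This step is unnecessary: the paper's thick-thin decomposition is set up with a Margulis parameter taken smaller than $\nu$, so that every hyperbolic element has displacement above the cutoff. Consequently the thin part has \emph{no} tubes --- it consists of cusps only (see the remark following Corollary~\ref{Kor:Dick_duenn_Zerlegung_Orbifaltigkeit_gute_Niveaus}) --- and each cusp strongly deformation retracts onto its common boundary with $M_+$ (Theorem~\ref{Satz:Dick_duenn_Zerlegung_Orbifaltigkeit}, item~4). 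Hence $M \simeq M_+$, and the simplicial complex $S$ from Theorem~\ref{Satz:Hauptresultat_beschraenkt_komplizierter_Simplizialkomplex} is already a model of $M$ of the required complexity; one applies \cite{Sauer} Lemma~5.2 to $C_*(S;\Z)$ and is done, with $F = D^n \log(n+2)\cdot C$. This is the actual role of the hypothesis on $\nu$ in the statement.

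By not exploiting this, your proposal takes on burdens it does not discharge. You would need a bounded-complexity triangulation of every thin component together with a boundary triangulation compatible with $S'$ --- a problem you explicitly flag as the ``main obstacle'' without resolving it, and which does not go away without further work (nothing in Theorem~\ref{Satz:Hauptresultat_beschraenkt_komplizierter_Simplizialkomplex} aligns $S'$ with any prescribed pattern on $\partial M_+$). You also assert that Margulis tubes have only finitely many topological types with bounded simplicial complexity depending on $n,\eta$; tubes carry a continuous parameter (the length of the core geodesic), and while the orbifold \emph{type} of the fiber and monodromy is finite, turning this into a uniform simplicial bound requires an argument you do not give. In the paper's route both issues evaporate, because no gluing happens. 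So: the Hadamard endgame is correct and coincides with the paper's, but the reduction from $M$ to a simplicial complex should go through $M\simeq M_+$ rather than through a gluing construction; as written, the gluing step leaves the proof incomplete.
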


Note that we did not exclude the case of degree $k=1$ in dimension $n=3$, in contrast to the similar statement of Bader, Gelander and Sauer \cite{Sauer} above; our extra restrictions regarding $\nu$ enable us to do so.

Although the general situation of our statements uses assumptions on $\eta$ and $\nu$, the results are as good as possible for the arithmetic, non-uniform case. Recall that arithmetic, non-uniform lattices in $\Isom(\H^n)$ correspond to arithmetic, non-compact hyperbolic orbifolds. They form an interesting and widely studied class of orbifolds, with the maybe most prominent examples given by the Bianchi orbifolds (in dimension $n=3$).

Our restriction to the hyperbolic setting is mainly for technical reasons, as we will use convexity arguments that only hold in constant curvature to prove that the cover we construct for the thick part is indeed a good cover; the goodness of the cover is needed to be able to use (some appropriate modification of) the Nerve lemma to obtain the desired simplicial model.

We strongly suspect that it will not be possible to eliminate $\eta$ from the assumptions. This is mainly due to a geometric reason, which also manifests itself in the following, simplified example: for orbifolds, a tube in the thin part takes the form\footnote{Tubes might also be of other forms than the one stated here (see our thick-thin decomposition Theorem \ref{Satz:Dick_duenn_Zerlegung_Orbifaltigkeit}), but this does not invalidate our general argument.} of a bundle over $\S^1$, with the fiber being the quotient of an $(n-1)$-disk by some finite subgroup $E$ of $\Gamma$. The volume of such a an orbifold tube is the quotient of the volume of the corresponding tube with trivial $E$ (of which we can think of as a manifold tube) by the order of $E$. In other words, $|E|$-many such orbifold tubes would contribute the same amount of volume as one corresponding manifold tube. If with rising volume\footnote{Note that due to Wang's theorem, it will not be possible to check this idea by just looking at lattices of a given volume; we need to increase the volume of the admissible lattices to obtain enough new orbifolds to look at.}, there was no joint bound $\eta$ on the order of finite subgroups of the lattices, disproportionally many ever smaller tubes (in terms of volume) could be fitted into the corresponding orbifolds; e.g. it could be possible that with linear growth of the volume, the number of tubes would grow polynomially or exponentially. Since the tubes are homotopic to $\S^1$, they could each contribute another free summand to the first homology, or -- in other terms -- increase the first Betti number by $1$; hence, the first Betti number would grow in orders larger than the volume. Thus to be able to bound the Betti numbers linearly by the volume, it seems necessary to assume a joint bound $\eta$ on the order of finite subgroups of the lattices.

The situation is different for $\nu$ -- which was needed for technical reasons only -- and we suspect that it could be eliminated from all the above statements. In fact, more general versions of the results presented here (see \cite{SenskaDissertation}) already no longer rely on $\nu$; but as of now, this comes at the price of having bounds polynomial in the volume\footnote{Essentially in the form of $C \cdot \Vol(M)^{k+1}$ for a constant $C = C(n, \eta) > 0$ and homology degree $k$.} instead of the linear ones presented in this paper. As these slight generalizations need significantly more technical work, we refrain from presenting them here.

\subsection{Structure of the paper}

The following section \ref{Kapitel:Grundlagen} summarizes some well-known facts utilized throughout this paper. As a next step, in section \ref{Kapitel:Dick-duenn-Zerlegung} we state the thick-thin decomposition for orbifolds in a fairly general setting. In section \ref{Kapitel:Hauptresultat}, we restrict ourselves to the hyperbolic case and prove the main result, namely the efficient simplicial model for the thick part of hyperbolic orbifolds. The main applications for the homology of such orbifolds are contained in the final section \ref{Kapitel:Anwendungen}.

\subsection{Acknowledgement}

The results presented here are part of my doctoral thesis \cite{SenskaDissertation}, which was written under the supervision of Prof. Roman Sauer, to whom I am grateful for supporting my work. The author acknowledges funding by the Deutsche Forschungsgemeinschaft (DFG, German Research Foundation) -- 281869850 (RTG 2229).

\section{Preliminaries}\label{Kapitel:Grundlagen}

First, let us fix notation and state some useful facts. General references for the concepts covered here are \cite{BGS} and \cite{BenPet}.

We will always let $X$ denote a Hadamard manifold with curvature $-1 \leq K \leq a < 0$ for some $a<0$. For a lattice $\Gamma < \Isom(X)$, $M := X\slash\Gamma$ will be the finite-volume orbifold to be studied ($M$ is a manifold if and only if $\Gamma$ is torsion-free). The boundary at infinity of $X$ will be denoted by $X(\infty)$ or sometimes $\partial X$.

Every isometry $\gamma \in \Isom(X)$ gives rise to a displacement function
\[
d_{\gamma}: X \rightarrow [0,\infty), \qquad x \mapsto d_{\gamma}(x) := d(x, \gamma x),
\]
which can be used to classify the nontrivial isometries of $\Isom(X)$: $\gamma$ is elliptic if $d_{\gamma}$ has minimum $0$; it is hyperbolic if $d_{\gamma}$ has minimum $>0$; and it is parabolic if $d_{\gamma}$ has no minimum. Elliptic isometries are precisely the torsion elements of $\Gamma$; if $M$ is non-compact, there has to be a parabolic $\gamma \in \Gamma$. The different isometry types are stable under taking powers (with powers $\neq 0$). Hyperbolic isometries have precisely two fixed points in $X(\infty)$, whereas parabolic isometries have precisely one.

Since the distance function of $X$ is convex, the same holds for the displacement functions. Hence the sublevel sets $\{ d_{\gamma} < a \}$ and $\{ d_{\gamma} \leq a \}$ (for $a \geq 0$) are convex as well. Fixed point sets $\FIX(\gamma)$ of elliptic isometries $\gamma$ arise in the special case of $a=0$; they are complete, totally geodesic submanifolds of codimension $\geq 1$ (sometimes we treat $\id$ as an elliptic isometry, in which case $\FIX(\id) = X$ has codimension $0$).

For a closed convex set $W \subseteq X$, there is a well-defined projection $\pi_W: X \rightarrow W$ sending a point $x \in X$ to the (unique) point $\pi_W(x) \in W$ of smallest distance to $x$; we will call $\pi_W(x)$ the projection point or foot point of $x$ in $W$. This projection is equivariant under isometries preserving $W$, i.e. if $\gamma \in \Isom(X)$ with $\gamma W = W$, then $\pi_W(\gamma x) = \gamma \pi_W(x)$ for all $x \in X$.

We will adopt some notation from \cite{Samet} regarding singular submanifolds. For $G < \Isom(X)$, let $F(G) := \bigcap_{g\in G} \FIX(g)$. Now for $\Delta < \Isom(X)$, define
\begin{align*}
\Sigma(\Delta) &:= \{ \FIX(G) : G < \Delta \text{ finite} \}, \\
\Sigma_i(\Delta) &:= \{ Y \in \Sigma(\Delta) : \dim(Y) = i \}, \text{ and}\\
\Sigma_{<i}(\Delta) &:= \{ Y \in \Sigma(\Delta) : \dim(Y) < i \}.
\end{align*}
Moreover,
\[
S_i(\Delta) := \bigcup_{Y\in \Sigma_i(\Delta)} Y \qquad\text{and}\qquad S_{<i}(\Delta) := \bigcup_{Y\in \Sigma_{<i}(\Delta)} Y.
\]
We will often omit $\Delta$ in the notation if it is obvious from the context. Note that since $\id \in \Delta$ (for arbitrary $\Delta < \Isom(X)$), we always have $X = \FIX(\{\id\}) \in \Sigma(\Delta)$.

To prove our main results, we will restrict ourselves to the hyperbolic space $X = \H^n$ and use the upper half space model:
\[
\H^n = \{ (x, t) = (x_1, \ldots, x_{n-1}, t) \in \R^{n-1} \times \R \,|\, t > 0 \} = \R^{n-1} \times \R_{>0} \subseteq \R^n
\]
with the usual hyperbolic metric. In $\H^n$, a nonempty, closed subset is convex if and only if it is the intersection of all its (closed) supporting half spaces (\cite{CEM} Proposition II.1.4.1). For the upper half space model, the distances are given by the following formulas:
\begin{itemize}
\item $d((x,t), (y,s)) = 2 \cdot \artanh \left( \sqrt{\frac{\|x-y\|^2 + (t-s)^2}{\|x-y\|^2 + (t+s)^2}} \right)$,

\item $d((x,t), (x,s)) = \left| \ln\left( \frac{t}{s} \right) \right|$,

\item $d((x,t), (y,t)) = 2 \cdot \arsinh \left( \frac{\|x-y\|}{2 \cdot t} \right)$.
\end{itemize}
Here, $\|\cdot\|$ denotes the usual Euclidean norm on the $\R^{n-1}$-factor.

\section{Thick-thin decomposition}\label{Kapitel:Dick-duenn-Zerlegung}

The general idea behind the thick-thin decomposition is most obvious in the manifold case: the manifold can be decomposed into two parts -- thick part and thin part --, with the thick part being characterized by a uniform lower bound on the injectivity radius; this makes its geometry easy to control. On the other hand, the thin part turns out to consist of only two types of components, tubes (which are ball bundles over the circle) and cusps (which are products of a compact manifold with a ray). Tubes correspond to sublevel sets of (the displacement function of) hyperbolic isometries in the universal cover, while cusps similarly correspond to sublevel sets of parabolic isometries. This relies on the absence of elliptic isometries in the lattice $\Gamma$. While the general situation is similar in the orbifold case, the elliptic isometries occurring now might complicate the picture. As we will see, using our slightly different construction, we can essentially remove the influence elliptic isometries might have: the thin part will be given by the sublevel sets of parabolic and hyperbolic isometries only.

Arguably the most essential tool in the thick-thin decomposition is the Margulis lemma:

\begin{satz}[Margulis lemma; \cite{Samet} Theorem 2.1]\label{Samet2.1}
There are constants $\epsilon(n) > 0$ and $m(n) \in \N$ depending only on $n$, such that if $X$ is an $n$-dimensional Hadamard manifold with sectional curvature $-1 \leq K \leq 0$, then for every discrete group $\Gamma < \Isom(X)$, every $x \in X$ and every $\epsilon \leq \epsilon(n)$, the group
\[
\Gamma_{\epsilon}(x) := \langle \{ \gamma \in \Gamma : d_{\gamma}(x) < \epsilon \} \rangle
\]
contains a nilpotent normal subgroup $N$ of index $\leq m(n)$. If $\Gamma_{\epsilon}(x)$ is finite, then $N$ is abelian.
\end{satz}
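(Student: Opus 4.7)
The plan is to follow the classical Zassenhaus--Kazhdan--Margulis route, specialized to the isometry group of a non-positively curved Hadamard manifold. The guiding idea is that isometries with small displacement at $x$ are ``almost the identity'' in $\Isom(X)$, that discreteness prevents them from being arbitrarily small unless they are trivial, and that iterated commutators force displacements to shrink; together these facts force the group they generate to be virtually nilpotent.

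First I would establish the central commutator estimate. Linearize any $\gamma \in \Isom(X)$ at $x$ into a translational part (the tangent vector recording where $x$ moves) and a rotational part (an element of the isotropy group $O(T_xX) \cong O(n)$ obtained from $d\gamma_x$). Jacobi field comparison with curvature $\geq -1$ provides a quantitative modulus of continuity for $d_\gamma$ on any unit ball around $x$, and a direct computation then shows that if $d_\gamma(x), d_\delta(x) < \epsilon$ and both rotational parts lie in a sufficiently small neighborhood of $\id \in O(n)$, the commutator satisfies $d_{[\gamma,\delta]}(x) < C(n)\cdot \epsilon \cdot (\epsilon + \vartheta)$, where $\vartheta$ measures the angular size of the rotational parts. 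This strict contraction is the quantitative heart of the lemma.

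Next I would extract the almost-nilpotent subgroup. Cover the compact group $O(n)$ by $m(n)$ balls of some small radius $\delta(n)$ to be fixed later. Call two generators $\gamma, \delta$ of $\Gamma_\epsilon(x)$ equivalent when their rotational parts land in the same ball, let $N_0$ be the subgroup of $\Gamma_\epsilon(x)$ generated by all $\gamma \delta^{-1}$ over equivalent pairs, and take $N$ to be the intersection of the finitely many $\Gamma_\epsilon(x)$-conjugates of $N_0$; a coset count gives $[\Gamma_\epsilon(x) : N] \leq m(n)$ after absorbing a dimensional factor into $m(n)$, and every generator of $N$ has rotational part in a small ball around $\id$ and displacement at most $2\epsilon$. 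The commutator estimate now applies inside $N$ and iterates: the $k$-th term of the lower central series lies in the sublevel set $\{\, d_\gamma(x) < (C\epsilon)^{2^k}/C \,\}$. Choosing $\epsilon(n)$ small enough that this eventually drops below the minimum displacement forced by discreteness on elements with small rotational part -- a bound uniform in $\Gamma$ by a volume packing argument in $B_1(x) \times O(n)$ -- forces the series to terminate at the identity, so $N$ is nilpotent.

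Finally, if $\Gamma_\epsilon(x)$ is finite, then every element is torsion and hence elliptic, and a circumcenter argument produces a common fixed point $y$; at $y$ the whole group sits inside $O(T_yX) \cong O(n)$, and after the $m(n)$-covering step the generators of $N$ lie in an arbitrarily small neighborhood of $\id$ where the commutator estimate collapses to the linearized Lie algebra commutator, forcing it to vanish and $N$ to be abelian. The chief technical obstacle is making the commutator contraction fully quantitative and uniform across all admissible $X$, $\Gamma$, and $x$; once that estimate is secured, the index bound, the termination of the descending series, and the abelian refinement are essentially bookkeeping.
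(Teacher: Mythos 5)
This is a cited result in the paper (Samet's Theorem 2.1, ultimately going back to the Gromov/Ballmann--Gromov--Schroeder version of the Margulis lemma); the paper itself contains no proof, so there is no argument in the paper to compare yours against. Your sketch does follow the standard Zassenhaus--Kazhdan--Margulis route, which is the right framework, but there are gaps that would have to be closed before it counts as a proof.

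The most serious one is the claim that there is ``a bound uniform in $\Gamma$'' on the displacement of non-identity elements with small rotational part, obtained ``by a volume packing argument in $B_1(x) \times O(n)$.'' No such uniform bound exists: a hyperbolic surface with a very short closed geodesic of length $\ell$ produces a discrete group containing a pure translation $\gamma$ (trivial rotational part) with $d_\gamma(x) = \ell$ arbitrarily small. What discreteness gives you is, for each \emph{fixed} $\Gamma$, only finitely many elements with bounded displacement and rotational part near the identity, hence a positive minimum displacement $\delta_\Gamma$ \emph{depending on} $\Gamma$. The iterated-commutator contraction then forces the lower central series of $N$ to terminate after finitely many steps; the number of steps, i.e.\ the nilpotency class, is \emph{not} uniformly bounded, and the statement does not claim that it is. As written, your argument would prove a uniformly bounded nilpotency class, which is false.

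Two further points need tightening. First, your commutator contraction $d_{[\gamma,\delta]}(x) < C(n)\,\epsilon\,(\epsilon + \vartheta)$ depends on the angular parameter $\vartheta$, so to iterate it you must also show that the rotational parts contract along the lower central series; the iteration bound $(C\epsilon)^{2^k}/C$ you write down is stated only for displacements and tacitly assumes this. Second, the index bound is asserted by ``a coset count'' on the subgroup $N_0$ generated by $\gamma\delta^{-1}$ over equivalent pairs of generators, but having generators distributed among $m(n)$ rotational-part classes does not by itself bound $[\Gamma_\epsilon(x):N_0]$ by $m(n)$; the standard proofs use a pigeonhole on powers $\gamma, \gamma^2, \ldots, \gamma^{m+1}$ of an arbitrary element to place a bounded power in the small-rotation subgroup, and some additional work (or an appeal to the structure of compact Lie groups) to convert that into an actual index bound. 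The abelian refinement in the finite case via a circumcenter/fixed-point argument is fine.

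Since the paper treats this as a black box, the practical recommendation is to cite Samet's Theorem 2.1 (or \cite{BGS}) as the paper does, rather than reproving it; but if you do want a self-contained proof, the two items above -- replacing the uniform-in-$\Gamma$ displacement bound by a per-$\Gamma$ discreteness argument, and making the index-bound pigeonhole precise -- are where the real work lies.
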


The constants $\epsilon(n)$ and $m(n)$ in Theorem \ref{Samet2.1} will be called Margulis $\epsilon$ and Margulis index constant, respectively.

In the standard thick-thin decompositions (see e.g. \cite{BGS} chapter 10, \cite{BenPet} chapter D; moreover, \cite{Bowditch} chapter 3.5 is interesting as it explicitly covers the orbifold case), all isometries are treated equally: the sublevel sets in question are given by $\{ d_{\gamma} < \epsilon \}$ for some fixed $\epsilon \in (0,\epsilon(n)]$. \cite{Sauer} already introduced the concept of varying levels $\epsilon_{\gamma}$ for different $\gamma$ for manifolds (see also \cite{SenskaVis} chapter 3 for a proof). Our definition extends this to the orbifold case.

Let $\epsilon \in (0, \epsilon(n)/2]$ be arbitrary, but fixed\footnote{We will generally choose $\epsilon$ to be some fixed fraction of $\epsilon(n)$ to make sure all constants only depend on the dimension $n$.}, and let $\Gamma \ni \gamma \mapsto \epsilon_{\gamma} \in [\epsilon, \epsilon(n)/2]$ be a conjugation-invariant choice of levels, i.e. $\epsilon_{\gamma \gamma' \gamma^{-1}} = \epsilon_{\gamma'}$ for all $\gamma, \gamma' \in \Gamma$. Define
\[
\Gamma_{\epsilon_{\Gamma}}(x) := \langle \gamma \in \Gamma : d_{\gamma}(x) < \epsilon_{\gamma} \rangle
\]
and consequently the \textbf{thick part} $X_+$ of $X$ as
\[
X_+ := \{ x \in X : \Gamma_{\epsilon_{\Gamma}}(x) \text{ is finite} \}.
\]
The \textbf{thin part} $X_-$ is its complement, i.e.
\[
X_- := \{ x \in X : \Gamma_{\epsilon_{\Gamma}}(x) \text{ is infinite} \}.
\]
We define the \textbf{thick part} $M_+$ and the \textbf{thin part} $M_-$ of $M = X\slash\Gamma$ as the quotient of the thick part and the thin part of $X$, respectively, by the group action of $\Gamma$, i.e.
\[
M_+ := X_+\slash\Gamma \qquad\text{and}\qquad M_- := X_-\slash\Gamma.
\]
By conjugation-invariance of $\gamma \mapsto \epsilon_{\gamma}$, this is well-defined.

An important fact is that in the present situation, for a fixed lattice $\Gamma$, there always exists an upper bound $\eta := \eta(\Gamma) \in \N$ on the order of finite subgroups of $\Gamma$ (see \cite{Bowditch} Proposition 5.4.2), i.e. $|G| \leq \eta$ for all finite $G < \Gamma$.

As the lengthy proof of the thick-thin decomposition is of limited benefit, we will omit it. The interested reader might look up the details in \cite{SenskaDissertation} chapter 3.2\footnote{The proof is -- as a gross simplification -- the combination of the ideas of the thick-thin decomposition for orbifolds with constant levels $\epsilon$ as in \cite{Bowditch} and the thick-thin decomposition with variable levels $\epsilon_{\gamma}$ (for manifolds) as in \cite{Sauer} and \cite{SenskaVis}.}. Eventually, we arrive at the following result:

\begin{satz}\label{Satz:Dick_duenn_Zerlegung_Orbifaltigkeit}
We have:

\begin{enumerate}
\item $M_+$ is a compact orbifold with boundary.

\item $M_+$ is connected for dimension $n>2$.

\item The number of connected components of $M_-$ is bounded by $C \cdot \Vol(M)$, where $C = C(\epsilon, n, \eta) > 0$ is a constant only depending on $\epsilon$, $n$ and $\eta = \eta(\Gamma) \in \N$. In dimension $n=2$, $C$ is independent of $\eta$.

\item The connected components $U$ of $M_-$ are of one of the following two shapes:
\begin{itemize}
\item Tubes (bounded components), i.e. (type 1) $U$ is homeomorphic to a $(D^{n-1}\slash E)$-bundle over $\S^1$ with $E < O(n-1)$ finite; or (type 2) $U$ is homeomorphic to
\[
D' \times [0,1] \quad\text{or}\quad D' \times (0,1),
\]
where $D' := (D^{n-1}\slash E)\slash \Z_2$ with finite $E < O(n-1)$, and the $\Z_2$-action on $D^{n-1}\slash E$ might be trivial.

Type 1 tubes or homotopy equivalent to $\S^1$, whereas type 2 tubes are contractible
\item Cusps (unbounded components), i.e. $U$ is homeomorphic to $V \times (0,\infty)$ for some compact $(n-1)$-dimensional orbifold $V$. If $\partial U$ is the boundary of $U$ in $M$, then there is a strong deformation retraction of $U$ onto $\partial U$.
\end{itemize}
In particular, $M$ is homotopy equivalent to the compact orbifold $M_C$ with boundary, which is constructed out of $M$ by contracting the cusps onto their common boundary with $M_+$. Equivalently, $M_C$ is the union of $M_+$ with the finitely many tubes.
\end{enumerate}
\end{satz}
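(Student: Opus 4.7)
The plan is to combine Bowditch's constant-level thick-thin decomposition for orbifolds with the variable-level refinement for manifolds from \cite{Sauer} and \cite{SenskaVis}, analyzing the stabilizer of each thin component via the Margulis lemma. The starting observation is that for $x \in X_-$ the group $\Gamma_{\epsilon_\Gamma}(x)$ is infinite and, by Theorem \ref{Samet2.1}, virtually nilpotent; its nilpotent finite-index subgroup $N$ is then also infinite, so its center $Z(N)$ contains an element of infinite order, which is either hyperbolic or parabolic. The standard fact that two commuting hyperbolic (resp.\ parabolic) isometries in negative curvature share an axis (resp.\ a fixed point at infinity) then forces all non-elliptic elements of $\Gamma_{\epsilon_\Gamma}(x)$ to share a common axis $A$ or a common fixed point $p \in X(\infty)$. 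This tags each component of $M_-$ as hyperbolic or parabolic.

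For a hyperbolic component I would take the union $T_A$ of the convex sublevel sets $\{d_\gamma < \epsilon_\gamma\}$ over all hyperbolic $\gamma \in \Gamma$ with axis $A$; by the conjugation-invariance of $\gamma \mapsto \epsilon_\gamma$ the collection $\{T_{gA}\}_{g \in \Gamma}$ is $\Gamma$-equivariant. The stabilizer $\Gamma_A = \{g \in \Gamma : gA = A\}$ is virtually cyclic, generated by a primitive translation $\gamma_0$ along $A$, a finite rotation group $E < O(n-1)$ pointwise fixing $A$, and possibly one involution flipping $A$. Quotienting the normal tube $T_A$ by $\Gamma_A$ produces either the $(D^{n-1}/E)$-bundle over $\S^1$ (type 1, no flip) or its $\Z_2$-quotient $D' \times [0,1]$ (type 2, flip present); the latter is contractible because the $\S^1$ direction has been folded into an interval. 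For a parabolic component, the analogous horoball analysis at $p$ using the virtually abelian stabilizer of $p$ yields a product $V \times (0,\infty)$ with $V$ a compact $(n-1)$-orbifold, and the radial coordinate supplies the strong deformation retraction onto $\partial U$.

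The remaining assertions follow by volumetric arguments. Compactness of $M_+$ is immediate since all non-compact components of the finite-volume $M$ lie in cusps, which are in $M_-$, while the orbifold-with-boundary structure comes from local convexity of the defining sublevel sets. Connectedness of $M_+$ for $n > 2$ follows because each thin component has a connected $(n-1)$-orbifold boundary, and in dimension $\geq 3$ any two paths can be homotoped around such a codimension-zero region with codimension-one separator. For the counting assertion, I would establish a uniform lower bound $\Vol(U) \geq c(\epsilon, n, \eta) > 0$ on each thin component: for a type 1 tube, the unquotiented tubular neighborhood has volume bounded below in terms of $\epsilon$ and $n$ alone, and passing to the quotient costs at most a factor of $\eta$ coming from the finite part of $\Gamma_A$; cusps admit an analogous estimate via horoball quotients by virtually abelian groups of rank at most $n-1$; in dimension $n=2$ the rotation group $E < O(1)$ is essentially trivial and $\eta$ drops out.

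The main technical obstacle is verifying that the variable-level sublevel sets organize themselves cleanly into the four listed shapes: one must rule out mixed hyperbolic/parabolic behavior in a single component (using that in an infinite virtually nilpotent group, hyperbolic and parabolic elements cannot coexist without sharing incompatible fixed-point data), handle the bifurcation between type 1 and type 2 tubes via the presence or absence of the axis-flipping involution, and ensure that $\partial M_+$ is an embedded suborbifold, which needs a careful transversality-style analysis where different sublevel sets meet. As the introduction's heuristic argument shows, the dependence of the constants on $\eta$ in dimension $n > 2$ is genuinely unavoidable.
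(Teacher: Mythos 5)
The paper does not actually present a proof of this theorem: it explicitly omits the argument, refers the reader to \cite{SenskaDissertation} chapter 3.2, and describes the strategy only in a footnote as the combination of Bowditch's constant-level orbifold thick-thin decomposition with the variable-level manifold version from \cite{Sauer} and \cite{SenskaVis}. Your proposal opens with exactly that strategy and fills it in with the standard ingredients one would expect in that combination --- Margulis lemma giving virtually nilpotent (hence elementary) infinite stabilizers, classification of thin components into hyperbolic-axis (tube) and parabolic-fixed-point (cusp) type, quotient analysis of normal tubes by virtually cyclic $\Gamma_A$ and of horoballs by the parabolic stabilizer, a per-component volume lower bound $c(\epsilon,n,\eta)$ for the count, and the $E < O(1)$ observation for $n=2$ --- so it takes essentially the same approach as the reference the paper cites.
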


It can be shown that using $M(n) := 2m(n)+1$ (with $m(n)$ the Margulis index constant), if the group
\[
\Gamma_{\epsilon/M(n)}(x) = \langle \gamma \in \Gamma : d_{\gamma}(x) < \epsilon/M(n) \rangle
\]
is infinite (where $\epsilon$ s.t. $0 < \epsilon/M(n) \leq \epsilon(n)$), there already has to be some $\gamma \in \Gamma_{\epsilon/M(n)}(x)$ of infinite order with $d_{\gamma}(x) < \epsilon$. Using this fact, we can deduce the following corollary.

\begin{kor}\label{Kor:Dick_duenn_Zerlegung_Orbifaltigkeit_gute_Niveaus}
If for some fixed $\epsilon' \in (0, \epsilon(n)/2]$ we choose
\[
\epsilon_{\gamma} := \begin{cases}
\epsilon' & \text{if } \gamma \text{ hyperbolic or parabolic}, \\
\epsilon := \frac{\epsilon'}{M(n)} & \text{if } \gamma \text{ elliptic},
\end{cases}
\]
with $M(n) \in \N$ as above, then the constant $C > 0$ in Theorem \ref{Satz:Dick_duenn_Zerlegung_Orbifaltigkeit} point 3. depends only on $\epsilon'$, $n$ and $\eta$, i.e. $C = C(\epsilon', n, \eta)$, and we have
\begin{align*}
X_- &= \{ x \in X : \text{There is a hyperbolic or parabolic } \gamma \in \Gamma \text{ with } d_{\gamma}(x) < \epsilon_{\gamma} \} \\
&= \bigcup_{\gamma \in \Gamma'} \{ d_{\gamma} < \epsilon_{\gamma} \}, \qquad \text{where } \Gamma' = \{ \gamma \in \Gamma : \gamma \text{ hyperbolic or parabolic} \}.
\end{align*}
\end{kor}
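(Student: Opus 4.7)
The plan is to deduce both assertions by combining the auxiliary fact stated immediately before the corollary with the dependency statement in Theorem \ref{Satz:Dick_duenn_Zerlegung_Orbifaltigkeit}.3. The constant claim is pure bookkeeping, while the characterization of $X_-$ is the substantive part.

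For the constant, the prescribed choice yields $\epsilon_\gamma \in \{\epsilon'/M(n), \epsilon'\} \subseteq [\epsilon'/M(n), \epsilon(n)/2]$, so the uniform lower bound on the levels is $\epsilon = \epsilon'/M(n)$. Theorem \ref{Satz:Dick_duenn_Zerlegung_Orbifaltigkeit}.3 then gives $C = C(\epsilon'/M(n), n, \eta)$, and since $M(n)$ depends only on $n$, this collapses to $C(\epsilon', n, \eta)$ as claimed.

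For the characterization of $X_-$, the containment $\supseteq$ is immediate: any hyperbolic or parabolic $\gamma$ with $d_\gamma(x) < \epsilon_\gamma = \epsilon'$ is an infinite-order element of $\Gamma_{\epsilon_\Gamma}(x)$, forcing that group to be infinite and hence $x \in X_-$. For the inclusion $\subseteq$, I would argue by contrapositive: suppose there is no hyperbolic or parabolic $\gamma \in \Gamma$ with $d_\gamma(x) < \epsilon'$, and show that $\Gamma_{\epsilon_\Gamma}(x)$ is finite. Under this assumption, every generator $\gamma$ of $\Gamma_{\epsilon_\Gamma}(x)$ must be elliptic — a hyperbolic or parabolic one would satisfy $d_\gamma(x) < \epsilon_\gamma = \epsilon'$, contradicting the assumption — and therefore satisfies $d_\gamma(x) < \epsilon'/M(n)$. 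Conversely, every $\gamma \in \Gamma$ with $d_\gamma(x) < \epsilon'/M(n)$ must be elliptic under the assumption (else $d_\gamma(x) < \epsilon'/M(n) < \epsilon'$ would again contradict it), so $\Gamma_{\epsilon_\Gamma}(x)$ and $\Gamma_{\epsilon'/M(n)}(x)$ have the same set of generators and thus coincide.

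Now apply the auxiliary fact (with its free parameter set to $\epsilon'$): if $\Gamma_{\epsilon'/M(n)}(x)$ were infinite, it would contain an element $\gamma$ of infinite order with $d_\gamma(x) < \epsilon'$. Since torsion elements of $\Gamma$ are precisely the elliptic ones, such $\gamma$ is hyperbolic or parabolic, contradicting the standing assumption. Hence $\Gamma_{\epsilon_\Gamma}(x) = \Gamma_{\epsilon'/M(n)}(x)$ is finite, i.e., $x \in X_+$. The displayed union formula then rewrites the first characterization using $\epsilon_\gamma = \epsilon'$ on $\Gamma'$. The only step with any genuine content is the contrapositive inclusion, where the work lies in correctly matching the two generating sets and invoking the auxiliary fact; everything else is notational.
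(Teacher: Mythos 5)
Your proof is correct and follows the route the paper implicitly intends: the paper states the auxiliary fact precisely so that, under the hypothesis that no hyperbolic or parabolic element has displacement $<\epsilon'$ at $x$, one identifies $\Gamma_{\epsilon_{\Gamma}}(x)$ with $\Gamma_{\epsilon'/M(n)}(x)$ and then invokes the auxiliary fact to force finiteness. The bookkeeping for the constant (lower level is $\epsilon'/M(n)$, and $M(n)$ depends only on $n$) is also as the paper intends.
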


\begin{bem}
Note that for a fixed lattice $\Gamma$, there always exists a lower bound $\nu := \nu(\Gamma) > 0$ on the displacement of hyperbolic isometries, i.e. $d_{\gamma}(x) > \nu$ for all $x \in X$ and $\gamma \in \Gamma$ hyperbolic. This can be used to move the tubes to the thick part; one way of doing this is to simply assume that the Margulis $\epsilon$ is smaller than $\nu$, which effectively means replacing every occurrence of $\epsilon(n)$ by $\widetilde{\epsilon}(n)$, where $\widetilde{\epsilon}(n) := \min(\epsilon(n), \nu)$. Hence every constant depending on $\epsilon(n)$ -- in particular, the $C$ from the previous statements -- will depend on $\nu$ as well. From now on we will always assume this, i.e. every time we use the Margulis $\epsilon$, we will implicitly take its minimum with $\nu$. The benefit is that $M_-$ will then consist of cusps only (so $X_- = \bigcup_{\gamma \in \Gamma_p} \{ d_{\gamma} < \epsilon_{\gamma} \}$, where $\Gamma_p = \{ \gamma \in \Gamma : \gamma \text{ parabolic} \}$). In particular, $M_+ = M_C$ and thus $M$ itself will be homotopy equivalent to its thick part.
\end{bem}

\section{Efficient simplicial model}\label{Kapitel:Hauptresultat}

In order to achieve the desired bounds on the homology of the orbifold $M$, we will show that $M$ admits an efficient simplicial model, i.e. there is a suitable homotopy equivalence to a simplicial complex with bounded complexity. This is done in several steps. First, contracting the cusps is a straightforward way to get a homotopy equivalence between $M$ and its shrunken thick part $M'_+$\footnote{Again, using our construction in which the thin part consists solely of cusps; of course the presence of tubes in the thin part would further complicate the matter. This more general situation is dealt with in \cite{SenskaDissertation}, yielding only slightly better results than the ones stated here.}, so it suffices to construct a nice simplicial model for $M'_+$. For that, we will essentially use the nerve construction: a space with a good cover (i.e. contractible covering sets with contractible intersections) is homotopy equivalent to its nerve (simplicial) complex. The tricky part is constructing this good cover of $M'_+$. Due to the singularities in the orbifold $M$, we are no longer free to choose the position of possible covering sets. In fact, just to achieve a good cover of $M'_+$ as a subspace of $M$, the covering sets have to conform to the positions of the singular submanifolds of $M$ (to that end, we will use \textit{foldable sets} similar to those in \cite{Samet}). This rigidity in the position of the covering sets leads to a complicated boundary of the covering space (as a subspace of $M$), which will in general no longer be homotopy equivalent to $M'_+$. To remedy this, we will fill in possible gaps at the boundary of that covering space by using new sets called \textit{stretched balls}, eventually obtaining a good cover of $M'_+$ homotopy equivalent to $M'_+$.

\subsection{Defining the flow}

We often utilize that the orbifolds we study are homotopy equivalent to their thick parts. While there are many possible ways to contract onto the thick part, there is a very natural one which turns out to have many useful properties; in particular, it will define a homotopy equivalence between the thick part and the shrunken thick part as well. From now on, we will use the thick-thin decomposition (Theorem \ref{Satz:Dick_duenn_Zerlegung_Orbifaltigkeit} and Corollary \ref{Kor:Dick_duenn_Zerlegung_Orbifaltigkeit_gute_Niveaus}) with $\epsilon' := \epsilon(n)/2$. Here and henceforth, the \textbf{shrunken thick part} will be defined by
\[
X'_+ := X \setminus (X_-)_{\epsilon(n)/32} \qquad\text{and}\qquad M'_+ := M \setminus (M_-)_{\epsilon(n)/32},
\]
respectively. Recall that in our case, the thin part consists of cusps only. Now for every cusp of $M$, there is a parabolic fixed point $z \in X(\infty)$. Hence for the region around a preimage of such a cusp, we can define the flow (in $X$) by flowing along the geodesics to/from $z$. As this turns out to be $\Gamma$-equivariant, we get a similar flow in $M$. This procedure can be repeated for every single cusp separately, and since the cusps have a uniform minimal distance from each other, we can stick these flows together to construct a global flow for all cusps simultaneously. Using the notation from the thick-thin decomposition (Theorem \ref{Satz:Dick_duenn_Zerlegung_Orbifaltigkeit}), the precise statement is as follows:

\begin{lem}\label{Lem:Fluss_starke_Deformationsretraktion}
The map $F: X_+ \times [0,1] \rightarrow X_+$ -- given by flowing away from the parabolic fixed points $z \in X(\infty)$ along the geodesics to them -- defines a strong deformation retraction of $X_+$ onto $X'_+$, which is $\Gamma$-equivariant and at time $1$ induces a homeomorphism $F(\cdot,1)_{|\partial X_+}$ between $\partial X_+$ and $\partial X'_+$.

Consequently, flowing along the images (under the projection $\pi: X\rightarrow {X\slash\Gamma} = M$) of these geodesics yields a map $f: M_+ \times [0,1] \rightarrow M_+$, which is a strong deformation retraction of $M_+$ onto $M'_+$ and at time $1$ induces a homeomorphism $f(\cdot,1)_{|\partial M_+}$ between $\partial M_+$ and $\partial M'_+$.
\end{lem}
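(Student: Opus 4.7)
The plan is to construct the flow $F$ near each cusp separately and then glue the local pieces. Since the thin part consists only of cusps (by the Remark assuming $\epsilon(n) \leq \nu$), each connected component $U$ of $X_-$ corresponds to a unique parabolic fixed point $z = z_U \in X(\infty)$ and lies inside a horoball based at $z$. A first step is to check that not only the components of $X_-$, but also their $\epsilon(n)/32$-neighborhoods, are mutually disjoint; this follows from the Margulis lemma together with $\epsilon(n)/2 + \epsilon(n)/32 < \epsilon(n)$, so the collars of distinct cusps cannot merge.

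For a fixed cusp with point $z$, I would pass to the upper half-space model normalized so that $z = \infty$. In these coordinates the geodesics to $z$ are vertical rays, $U$ takes the form $\{(x,t) : t > T\}$ for some $T > 0$, and the thickening $(X_-)_{\epsilon(n)/32}$ near this cusp is $\{t > T'\}$ with $T' = T \cdot e^{-\epsilon(n)/32}$, since hyperbolic distance along a vertical line is the logarithm of the height ratio. Define the local flow by
\[
F((x,t), s) := (x,\, t^{1-s} (T')^s) \text{ for } T' \leq t \leq T, \qquad F((x,t), s) := (x, t) \text{ for } t \leq T'.
\]
At $s = 0$ this is the identity; at $s = 1$ it maps the horosphere $\{t = T\}$ onto $\{t = T'\}$; and the two pieces agree along $\{t = T'\}$, so $F$ is continuous. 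Since isometries fixing $z$ preserve vertical lines in the normalized model, the construction commutes with the parabolic stabilizer $\Gamma_z$.

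To globalize, I would transport this local flow to every other parabolic fixed point via the $\Gamma$-action, setting the flow at $\gamma z$ equal to $\gamma \circ F(\cdot, s) \circ \gamma^{-1}$. The disjointness of the $\epsilon(n)/32$-collars established above makes these definitions consistent and assembles them into a single $\Gamma$-equivariant map $F: X_+ \times [0,1] \to X_+$ which fixes $X'_+$ pointwise, is the identity at $s = 0$, and carries every remaining point of $X_+$ into $X'_+$ at $s = 1$. This gives the strong deformation retraction; $\Gamma$-equivariance then descends $F$ to the required map $f$ on $M_+$ with the analogous retraction property.

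The main obstacle I expect is verifying that $F(\cdot, 1)|_{\partial X_+}$ is a homeomorphism onto $\partial X'_+$. Continuity and surjectivity are clear by construction; the subtle point is injectivity, which requires that the flow lines, i.e.\ geodesics going away from the parabolic fixed points, stay pairwise disjoint between $\partial X_+$ and $\partial X'_+$. Within a single cusp this is immediate in the normalized model, where the flow lines are distinct vertical rays; across different cusps it follows from the disjointness of the thickened collars. Continuity of the inverse then holds since $F(\cdot, 1)$ is a continuous bijection between closed topological hypersurfaces to which invariance of domain applies. Descending to $M$ via $\Gamma$-equivariance preserves all of these properties and yields the homeomorphism $f(\cdot, 1)|_{\partial M_+} : \partial M_+ \to \partial M'_+$.
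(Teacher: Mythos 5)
Your overall strategy—work in the upper half-space model with the parabolic fixed point at $\infty$, flow points down along vertical geodesics, globalize by $\Gamma$-equivariance using the disjointness of the cusp collars—is exactly the approach the paper intends (it only defers the details to \cite{SenskaVis} and \cite{SenskaDissertation}). However, there is a genuine gap in the middle of your argument.

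You assert that the cusp component $U$ of $X_-$ (in coordinates with $z = \infty$) has the form $\{(x,t): t > T\}$ for a \emph{constant} $T > 0$, i.e.\ that it is a horoball, and then deduce $(X_-)_{\epsilon(n)/32} = \{t > T e^{-\epsilon(n)/32}\}$ and write the explicit flow $F((x,t),s) = (x,\,t^{1-s}(T')^s)$. This is false in general. By Corollary \ref{Kor:Dick_duenn_Zerlegung_Orbifaltigkeit_gute_Niveaus}, $U$ is the union of the sublevel sets $\{d_\gamma < \epsilon(n)/2\}$ over parabolic $\gamma$ fixing $z$. In the normalized model such a $\gamma$ acts on $\R^{n-1}$ by a Euclidean motion $x \mapsto Ax + b$, and $d_\gamma((x,t)) = 2\arsinh\bigl(\|(A-I)x + b\|/(2t)\bigr)$. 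Only when $A = I$ (a pure translation) is $\{d_\gamma < \epsilon\}$ a horoball $\{t > \text{const}\}$; for parabolics with nontrivial rotational part, the sublevel set depends on $x$. Since $\Gamma$ is an arbitrary lattice (not torsion-free), the cusp stabilizer $\Gamma_z$ is a Bieberbach group that in general contains screw motions, so $U$ is of the form $\{t > T(x)\}$ for a genuinely non-constant function $T$. Consequently neither your formula for $T'$ nor the explicit flow formula applies as written.

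The idea is salvageable: the cusp component is still star-shaped toward $z$ (each sublevel set contains the full upward vertical ray through any of its points, since $d_\gamma((x,t))$ is decreasing in $t$), and the same argument applied to $(X_-)_{\epsilon(n)/32}$ shows that $X'_+$ near $z$ has the form $\{t \leq T'(x)\}$ for some function $T'$. You would then set $F((x,t),s) = (x,\,t^{1-s} T'(x)^s)$ for $T'(x) \leq t \leq T(x)$. But then you must additionally verify that $T'$ is continuous (so $F$ is continuous) and $\Gamma_z$-invariant, i.e.\ $T'(\gamma x) = T'(x)$ for $\gamma \in \Gamma_z$ (so $F$ is $\Gamma$-equivariant)—neither of which is automatic and neither of which your proof addresses. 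The homeomorphism at time $1$ and the descent to $M_+$ go through once the corrected flow is in place, along the lines you describe.

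A minor point: the Lipschitz constant of $d_\gamma$ is $2$, so the relevant estimate for collar disjointness is $\epsilon(n)/2 + 2\cdot\epsilon(n)/32 < \epsilon(n)$ rather than $\epsilon(n)/2 + \epsilon(n)/32 < \epsilon(n)$; both inequalities hold, so your conclusion is unaffected, but the derivation should be stated correctly.
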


\begin{proof}
The desired properties of the flow can be shown in a similar way as in \cite{SenskaVis} chapter 4.1. A more detailed discussion can also be found in \cite{SenskaDissertation} chapter 3.
\end{proof}

\subsection{Stretched and foldable sets}

Our special tools to arrive at a good covering of the thick part in the orbifold case will be the foldable sets and stretched balls mentioned earlier. While foldable sets can be used in the more general negative curvature setting, the stretched balls are only useful in the hyperbolic case\footnote{In fact, stretched balls might also be helpful in the more general setting, but it will be much harder to show that. A crucial feature of the covering sets is that they and their intersections have to be contractible. In the hyperbolic case, it is easy to show that stretched balls are convex, immediately giving the desired properties. This uses the fact that in $\H^n$, every half space is convex; but on the other hand -- due to a classical result of Cartan -- a space where for every point $x$ and every tangent plane $\Sigma_x$ at that point there is a totally geodesic submanifold tangent to $\Sigma_x$ (which holds true under the convex half space condition) already has to have constant curvature, restricting us to the case of $\H^n$. Hence in general negative curvature, other arguments than convexity would have to be used to show that stretched balls and their intersections are contractible.}, hence from now on we will always assume $X = \H^n$.

\subsubsection*{Foldable sets}

Foldable sets were already introduced in \cite{Samet}. The motivation behind them is that in order to have a contractible image in the quotient orbifold, the shape and position of a set in the universal cover has to be compatible with the singular submanifolds\footnote{As an example, let the fixed point $p \in \H^2$ of a suitable rotation $\gamma$ be the singular submanifold. The quotient $M := \H^2\slash\langle \gamma \rangle$ will look like a cone, where $\pi(p)$ is the cone point. A ball far away from $p$ will still have an image in $M$ that looks like a ball, which thus is contractible. But as we let the ball move towards $p$, at some point the rotation will glue opposite parts of the ball together, leading to a non-contractible image homotopic to $\S^1$ -- no matter how small the radius of the ball is. Note that if the ball was centered in $p$, the image would always be contractible, as it would just be a ball around the cone point. So the moral is that (in order to have a contractible image) balls would either have to lie far away from the singular submanifolds, or -- if they lie close to them -- already be centered in them.}.

\begin{defi}
Let $U \subseteq X$ be open and $Y \subseteq X$ be a convex, complete, totally geodesic submanifold. $U$ is \textbf{$Y$-foldable}, if it has the following properties:
\begin{enumerate}
\item $U$ is convex and precisely invariant under $\Gamma$, i.e. for $\gamma \in \Gamma$ we always have $\gamma U = U$ or $\gamma U \cap U = \emptyset$.
\item $Y$ is fixed pointwise by $\Gamma_U = \{ \gamma \in \Gamma : \gamma U = U \}$.
\item $\pi_Y(U) \subseteq U$, where $\pi_Y: X \rightarrow Y$ projects to the closest point in $Y$.
\item The image $\pi(U \cap Y)$ of $U \cap Y$ in $X\slash\Gamma$ is contractible.
\end{enumerate}
If $U$ is a $Y$-foldable set for suitable $Y$, we will call $U$ \textbf{foldable}. In that case, the image $\pi(U) \subseteq X\slash\Gamma$ is a \textbf{folded set}.
\end{defi}

Note that we altered the definition slightly: we only assume that $\pi(U \cap Y)$ is contractible, but not necessarily convex. Later on, $Y$ will always be given by the fixed set of a finite subgroup of $\Gamma$, hence the assumptions on $Y$ will be fulfilled automatically.

Just as in \cite{Samet}, we get the following statements.

\begin{lem}[\cite{Samet} Proposition 4.9]\label{Lem:Gefaltete_Mengen_zusammenziehbar}
Folded sets are contractible.
\end{lem}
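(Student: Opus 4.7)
The plan is to exhibit an explicit $\Gamma_U$-equivariant strong deformation retraction of $U$ onto $U \cap Y$ and push it down to the quotient, so that $\pi(U)$ deformation retracts onto $\pi(U\cap Y)$, which is contractible by hypothesis (condition 4).

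First, I would identify $\pi(U)$ with the topological quotient $U/\Gamma_U$. By precise invariance (condition 1), the translates $\gamma U$ for $\gamma \in \Gamma\setminus\Gamma_U$ are disjoint from $U$, so the restriction of the quotient map $\pi\colon X\to X/\Gamma$ to $U$ factors through $U/\Gamma_U$ and induces a homeomorphism $U/\Gamma_U \xrightarrow{\cong} \pi(U)$. Applying the same argument to $U\cap Y$ (which is $\Gamma_U$-invariant, as $\Gamma_U$ preserves both $U$ and $Y$) gives $\pi(U\cap Y) \cong (U\cap Y)/\Gamma_U$.

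Next, I would use the foot-point projection to construct the retraction. By condition 3, $\pi_Y(U)\subseteq U$, hence in fact $\pi_Y(U)\subseteq U\cap Y$, so that $r := \pi_Y|_U \colon U\to U\cap Y$ is a retraction. For each $u\in U$ there is a unique geodesic segment $\sigma_u\colon[0,1]\to X$ from $u$ to $r(u)$; since $U$ is convex, $\sigma_u([0,1])\subseteq U$. Defining $H\colon U\times[0,1]\to U$ by $H(u,t):=\sigma_u(t)$ yields a strong deformation retraction of $U$ onto $U\cap Y$ (continuity is clear from the standard continuous dependence of geodesics on endpoints, and $H(u,t)=u$ for $u\in U\cap Y$ since then $r(u)=u$).

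The key point is $\Gamma_U$-equivariance. For any $\gamma\in\Gamma_U$, condition 2 says $\gamma$ fixes $Y$ pointwise, so in particular $\gamma Y=Y$, and equivariance of the projection onto closed convex sets gives $\pi_Y(\gamma u)=\gamma \pi_Y(u)=\pi_Y(u)$. Thus the geodesic from $\gamma u$ to $\pi_Y(\gamma u)$ is precisely $\gamma\circ \sigma_u$, so $H(\gamma u,t)=\gamma H(u,t)$. Therefore $H$ descends to a strong deformation retraction $\bar H$ of $U/\Gamma_U\cong\pi(U)$ onto $(U\cap Y)/\Gamma_U\cong\pi(U\cap Y)$. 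Since $\pi(U\cap Y)$ is contractible by condition 4, $\pi(U)$ is contractible as well.

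I do not expect any real obstacle: all four axioms of foldability enter in a single compact step each (precise invariance for identifying the quotient, pointwise fixing of $Y$ for equivariance, $\pi_Y(U)\subseteq U$ for the retraction being into $U\cap Y$, and contractibility of $\pi(U\cap Y)$ for the final conclusion), and convexity of $U$ supplies the ambient space for the straight-line homotopy. The only minor care needed is verifying that the quotient map $U\to U/\Gamma_U$ really is a homeomorphism onto $\pi(U)$, which is immediate from precise invariance and the fact that $\pi$ is a quotient map.
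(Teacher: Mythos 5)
Your argument is correct and is essentially the standard proof (the one behind Samet's Proposition 4.9, which the paper cites without reproving): use precise invariance to identify $\pi(U)\cong U/\Gamma_U$, use conditions 2 and 3 to make the foot-point projection $\pi_Y|_U$ into a $\Gamma_U$-equivariant retraction onto $U\cap Y$, upgrade it to a strong deformation retraction via the geodesic homotopy inside the convex set $U$, descend to the quotient, and finish with condition 4. No gaps; this is the intended route.
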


\begin{lem}\label{Lem:Gefaltete_Mengen_U_geschnitten_Y_injektiv}
If $U$ and $Y$ are subsets of $X$, such that $U$ is precisely invariant under $\Gamma$ and $Y$ is fixed pointwise by $\Gamma_U$, then $U \cap Y$ is mapped injectively into $X\slash\Gamma$. In particular, $U \cap Y$ is homeomorphic to $\pi(U \cap Y) \subseteq X\slash\Gamma$.
\end{lem}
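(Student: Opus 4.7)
The plan is to prove injectivity of $\pi|_{U \cap Y}$ directly from the two structural hypotheses (precise invariance of $U$ and pointwise fixation of $Y$ under $\Gamma_U$), and then to upgrade the resulting continuous bijection to a homeomorphism using that the quotient map $\pi: X \to X\slash\Gamma$ is open.

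For injectivity, I would take two points $x, x' \in U \cap Y$ with $\pi(x) = \pi(x')$. By definition of the quotient, there is some $\gamma \in \Gamma$ with $\gamma x = x'$. Since $x \in U$, the point $x' = \gamma x$ lies in $\gamma U$, and simultaneously $x' \in U$. Hence $U \cap \gamma U \neq \emptyset$, so by precise invariance $\gamma U = U$, i.e.\ $\gamma \in \Gamma_U$. But by the second hypothesis $\gamma$ fixes $Y$ pointwise, and since $x \in Y$ we conclude $x' = \gamma x = x$. This shows $\pi|_{U \cap Y}$ is injective.

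For the homeomorphism statement, $\pi|_{U \cap Y}$ is automatically a continuous bijection onto $\pi(U \cap Y)$ (equipped with the subspace topology from $X\slash\Gamma$); the only remaining issue is continuity of the inverse, which I expect to be the main (though mild) obstacle. In the applications $U$ will be open (cf.\ the definition of foldable sets), so I would argue as follows: given any relatively open $V \subseteq U \cap Y$, write $V = W \cap (U \cap Y)$ with $W \subseteq X$ open, set $\widetilde V := W \cap U$ (open in $X$), and observe that $\pi(\widetilde V)$ is open in $X\slash\Gamma$ because the $\Gamma$-action is by homeomorphisms, so $\pi$ is an open map. The crucial identity to verify is
\[
\pi(\widetilde V) \cap \pi(U \cap Y) = \pi(V).
\]
The inclusion $\supseteq$ is immediate. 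For $\subseteq$, a point in the left-hand side comes from some $y \in \widetilde V$ with $\pi(y) = \pi(z)$ for $z \in U \cap Y$; the injectivity argument above applies to the pair $(y,z)$ once one notes that $y \in U$ and $z \in U$ force the connecting $\gamma \in \Gamma_U$, hence $y = z \in Y$. Thus $y \in W \cap U \cap Y = V$, which gives the identity and shows $\pi(V)$ is open in $\pi(U \cap Y)$. This establishes that $\pi|_{U\cap Y}$ is an open bijection onto its image, hence a homeomorphism.
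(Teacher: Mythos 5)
The paper does not actually supply a proof of this lemma, so there is no argument in the source to compare against line by line; I can only assess your proposal on its own.

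Your injectivity argument is exactly the natural one and is correct: $\pi(x)=\pi(x')$ gives $\gamma$ with $\gamma x = x'$, then $x'\in U\cap\gamma U$ forces $\gamma\in\Gamma_U$ by precise invariance, and pointwise fixation of $Y$ gives $\gamma x = x$ since $x\in Y$. This is presumably the proof the author had in mind.

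Two small remarks on the homeomorphism part. First, you are right to flag that the ``in particular'' does not follow from injectivity alone; as literally stated (``$U$ and $Y$ subsets of $X$''), the lemma would be false. For instance, with $X=\R$, $\Gamma=\Z$, $Y=X$, and $U=\{0\}\cup\{1-\tfrac1n : n\ge 2\}$, the set $U$ is precisely invariant with $\Gamma_U$ trivial, yet $\pi|_U$ is an injection whose inverse fails to be continuous at $\pi(0)$. So one genuinely needs some extra hypothesis such as openness of $U$, which holds in every application (the definition of a foldable set requires $U$ open), and your openness argument then closes the gap correctly. Second, in the step where you conclude $y=z$ from $y\in\widetilde V\subseteq U$, $z\in U\cap Y$ and $\gamma y = z$ with $\gamma\in\Gamma_U$: the injectivity argument as you stated it needs both points in $Y$. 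The quickest fix, which is clearly what you intend, is to observe that $\Gamma_U$ is a group, so $\gamma^{-1}\in\Gamma_U$ also fixes $Y$ pointwise; applying this to $z\in Y$ gives $y = \gamma^{-1}z = z$, and in particular $y\in Y$. With that phrase spelled out the argument is complete.
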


We see that the above Lemma \ref{Lem:Gefaltete_Mengen_U_geschnitten_Y_injektiv} holds in particular for $Y$-foldable sets $U$. Conversely, we can also use it to prove that a set is foldable: if properties 1. and 2. hold, then for property 4. the situation in $X\slash\Gamma$ can be reduced to the one in $X$, which -- in general -- is less complicated.

The following lemma tells us, in which situations \textit{ordinary} balls (and their intersections) are foldable and when an intersection of folded balls remains folded (and thus contractible). In a next step, we will extend this to our special covering sets, the \textit{stretched} balls.

\begin{lem}[\cite{Samet} Proposition 4.10]\label{Lem:Durchschnitt_Kugeln_faltbar}
Let $Y \in \Sigma_i$ (where $i = n$ -- i.e. $Y = X$ -- is also possible).
\begin{enumerate}
\item[a)] If $y \in Y \setminus S_{<i}$ and $\mu$ is sufficiently small, such that $Y$ is fixed pointwise by $\Gamma_{4\mu}(y)$, then $B_{\mu}(y)$ is $Y$-foldable.
\item[b)] Let $U_1$ be a $Y$-foldable ball as in a) and $U_2, \ldots, U_k$ foldable balls with centers $y_2, \ldots, y_k \in Y$; note that we don't assume $y_2, \ldots, y_k \in Y \setminus S_{<i}$ and hence the $U_2, \ldots, U_k$ will in general not be $Y$-foldable. If $U := \bigcap_{j=1}^k U_j \neq \emptyset$, then:
\begin{enumerate}
\item[1.] $U$ is $Y$-foldable.
\item[2.] If the radii $\mu_j$ of the $U_j$ are chosen in a way such that $y_j$ is fixed by $\Gamma_{4\mu_j}(y_j)$ ($j=1,\ldots,k$), then $\pi(U) = \bigcap_{j=1}^k \pi(U_j)$.

Hence the intersection $\bigcap_{j=1}^k \pi(U_j)$ of the $\pi(U_j)$ is folded and thus contractible.
\end{enumerate}
\end{enumerate}
\end{lem}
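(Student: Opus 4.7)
First I would verify Part~(a) by checking the four axioms of $Y$-foldability for $U := B_\mu(y)$ in turn. Convexity of a ball is automatic. For precise invariance, any $\gamma \in \Gamma$ with $\gamma B_\mu(y) \cap B_\mu(y) \neq \emptyset$ forces $d(y, \gamma y) < 2\mu$, so $\gamma \in \Gamma_{4\mu}(y)$; since $y \in Y$ and $Y$ is fixed pointwise by this group, $\gamma y = y$, hence $\gamma B_\mu(y) = B_\mu(y)$. The same inclusion $\Gamma_U \subseteq \Gamma_{4\mu}(y)$ immediately yields axiom~2. Axiom~3 follows from $\pi_Y$ being $1$-Lipschitz with $\pi_Y(y) = y$, giving $d(\pi_Y(x), y) \leq d(x, y) < \mu$ for all $x \in U$. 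For axiom~4, $U \cap Y = B_\mu(y) \cap Y$ is a convex intrinsic ball in $Y$, and Lemma~\ref{Lem:Gefaltete_Mengen_U_geschnitten_Y_injektiv} (applied using axioms 1--2) gives that $\pi(U \cap Y)$ is homeomorphic to it and therefore contractible.

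Next, for Part~(b).1, the four axioms for $U := \bigcap_{j=1}^k U_j$ propagate componentwise from Part~(a). Convexity is immediate. For precise invariance, if $\gamma U \cap U \neq \emptyset$, then $\gamma U_j \cap U_j \neq \emptyset$ for every $j$, so $\gamma U_j = U_j$ by precise invariance of each $U_j$, whence $\gamma U = U$. This also shows $\Gamma_U \subseteq \Gamma_{U_1}$, which fixes $Y$ pointwise, giving axiom~2. For axiom~3, each $U_j = B_{\mu_j}(y_j)$ with $y_j \in Y$ satisfies $\pi_Y(U_j) \subseteq U_j$ by the argument of Part~(a), and intersecting yields $\pi_Y(U) \subseteq U$. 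For axiom~4, $U \cap Y$ is a finite intersection of intrinsic metric balls in $Y$, hence convex; Lemma~\ref{Lem:Gefaltete_Mengen_U_geschnitten_Y_injektiv} transfers contractibility to $\pi(U \cap Y)$.

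For Part~(b).2, the inclusion $\pi(U) \subseteq \bigcap_j \pi(U_j)$ is tautological. For the reverse inclusion I take $q \in \bigcap_j \pi(U_j)$, pick lifts $x_j \in U_j$ of $q$, and write $x_j = \gamma_j x_1$. The task reduces to showing that $x_1$ already lies in every $U_j$, i.e.\ that each $\gamma_j$ fixes $y_j$; by the radius hypothesis this will follow once $\gamma_j \in \Gamma_{4\mu_j}(y_j)$. The concluding assertion -- that $\bigcap_j \pi(U_j)$ is folded and thus contractible -- then follows by combining (b).1 with Lemma~\ref{Lem:Gefaltete_Mengen_zusammenziehbar}.

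The main obstacle is precisely this last displacement bound: a naive triangle-inequality estimate for $d(y_j, \gamma_j y_j)$ in terms of $\mu_1$ and $\mu_j$ is just too weak when the radii are not comparable. Following Samet, one sharpens it by first projecting $x_1$ and $x_j$ onto $Y$ via $\pi_Y$ (staying inside the respective $U_j$ by axiom~3) and then invoking the injectivity statement Lemma~\ref{Lem:Gefaltete_Mengen_U_geschnitten_Y_injektiv} for $U_1 \cap Y$; the $Y$-foldability of $U_1$ -- specifically the fact that $\Gamma_{U_1}$ fixes $Y$ pointwise -- then forces each $\gamma_j$ into $\Gamma_{U_1}$, and $\gamma_j y_j = y_j$ follows automatically.
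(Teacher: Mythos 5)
Parts (a) and (b).1 of your proposal are correct and match the intended arguments: the axiom checks via the Lipschitz property of $\pi_Y$, the precise-invariance argument through $\Gamma_U \subseteq \Gamma_{4\mu}(y)$ (resp.\ $\Gamma_U \subseteq \Gamma_{U_1}$), and contractibility of $U \cap Y$ via convexity plus Lemma~\ref{Lem:Gefaltete_Mengen_U_geschnitten_Y_injektiv} are all sound.

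Part (b).2 has a genuine gap, and you have in fact correctly located it: the naive triangle-inequality bound $d(y_j,\gamma_j y_j) < 2\mu_1 + 2\mu_j$ only gives $\gamma_j \in \Gamma_{4\mu_j}(y_j)$ when $\mu_1 \leq \mu_j$, which is not guaranteed. But the fix you then sketch does not work. Projecting $x_1$ and $x_j$ onto $Y$ lands $\pi_Y(x_j)$ in $U_j \cap Y$, \emph{not} in $U_1 \cap Y$, so Lemma~\ref{Lem:Gefaltete_Mengen_U_geschnitten_Y_injektiv} for $U_1 \cap Y$ doesn't apply; moreover $\pi_Y$ is only equivariant under $\Gamma_Y$, so there is no reason the two projections should map to the same point of $M$, and no mechanism is given that actually places $\gamma_j$ in $\Gamma_{U_1}$. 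The missing idea -- which is what Samet and the analogous stretched-ball Lemma~\ref{Lem:Durchschnitt_parabolisch_gestreckte_Kugeln_faltbar} in this paper actually do -- is to first describe a preimage of $\bigcap_j \pi(U_j)$ as a union of sets $\bigcap_j \gamma_j U_j$, pick $j_0$ with $\mu_{j_0}$ \emph{minimal}, and normalize by $\gamma_{j_0}^{-1}$ so that $\gamma_{j_0} = \id$. Then a common point of $\bigcap_j \gamma_j U_j$ gives $d(y_{j_0}, \gamma_j y_j) < \mu_{j_0} + \mu_j$, a common point of $\bigcap_j U_j$ gives $d(y_{j_0}, y_j) < \mu_{j_0} + \mu_j$, and combining these with $\mu_{j_0} \leq \mu_j$ yields $d(y_j, \gamma_j y_j) < 2\mu_{j_0} + 2\mu_j \leq 4\mu_j$, so $\gamma_j \in \Gamma_{4\mu_j}(y_j)$ fixes $y_j$ and $\gamma_j U_j = U_j$. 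The point is that the reference ball must be the one of smallest radius, not $U_1$; no projection or injectivity argument is needed.
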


\subsubsection*{Stretched balls}

We will now introduce stretched balls, which are needed later on to fill in gaps in the good cover of the thick part, which could appear if we used ordinary balls exclusively. Stretched balls only need to be defined near the common boundary of a cusp and the thick part. Let $z \in X(\infty) = \partial \H^n$ be the corresponding parabolic fixed point of the cusp and take the upper half space model of $X = \H^n$ with $z$ as $\infty$.

Denote the maximal parabolic subgroup of $\Gamma$ corresponding to $z$ by $G'$ (i.e. $G' = \Gamma_z$). If $G < \Gamma$ is a finite group fixing $z$ and $\FIX(G) = \bigcap_{g \in G} \FIX(g)$ its singular submanifold, we have $\FIX(G) \in \Sigma(G')$. Note that $\FIX(G)$ is either equal to $\H^n$ (namely if $G = \{ \id \}$) or given by the (non-empty) intersection of Euclidean affine hyperplanes perpendicular to the boundary $\partial \H^n = \R^{n-1} \times \{ 0 \}$. In the present situation we say that the singular submanifold $\FIX(G)$ contains the parabolic fixed point $z$.

Let $c: \R \rightarrow \R^n$ be the hyperbolic geodesic with endpoints $c(\infty) = z$ and $c(-\infty) = (x^{(0)}, 0)$ for some $x^{(0)} \in \R^{n-1}$, so $c([t'_0, t'_1])$ (for $t'_1 \geq t'_0 > 0$) is a geodesic section. A \textbf{stretched ball} $U$ of Euclidean radius $r > 0$ along this geodesic section is then defined by
\[
U := \{ (x, t) \in \H^n : d_{\text{eucl}}( (x, t), c(t') ) < r \text{ for some } t' \in [t'_0, t'_1] \},
\]
where $d_{\text{eucl}}$ denotes the Euclidean distance in $\R^n$. Since Euclidean balls are just hyperbolic balls (with different radius and center), we can also see such a $U$ as a union of hyperbolic balls around points of $c$. Of those points, the one with the smallest $t$-coordinate will be the (hyperbolic) initial center, whereas the one with the largest $t$-coordinate will be the (hyperbolic) end center. Obviously, the hyperbolic radius decreases monotonically while going from the initial center to the end center. The (hyperbolic) radius at the (hyperbolic) initial center will be called the (hyperbolic) initial radius, and similarly we get the (hyperbolic) end radius; the corresponding balls will be the (hyperbolic) initial ball and the (hyperbolic) end ball, respectively. Note that the hyperbolic initial and end center do not coincide with the Euclidean initial and end center $c(t'_0)$ and $c(t'_1)$, respectively; despite this, the hyperbolic initial and end balls are the same as the Euclidean initial and end balls. For a better understanding, the construction is pictured in Figure \ref{Bild_Parabolisch_gestreckte_Kugel_Konstruktion}. As a union of open balls, $U$ is itself open. $U$ is also convex, as it is the intersection of its supporting half spaces (see \cite{CEM} Proposition II.1.4.1). Finally, note that ordinary balls are special cases of stretched balls for $t'_0 = t'_1$.

{
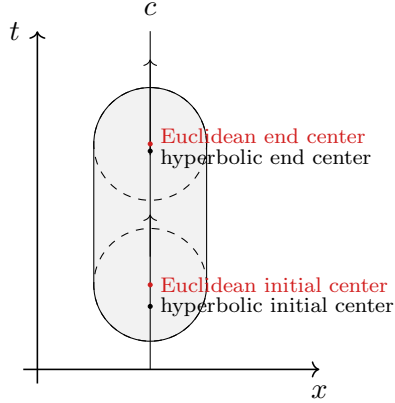
\begin{figure}
\centering

\begin{tikzpicture}[scale=0.75]




\draw[semithick, ->] (-0.25,0) -- (5,0);				
\node at (5,-0.4) {$x$};
\draw[semithick, ->] (0,-0.25) -- (0,6);				
\node at (-0.4,6) {$t$};


\fill[color=gray,opacity=0.1] (1,1.5) arc (180:360:1cm) -- (3,4) arc (0:180:1cm) -- cycle;
\draw[thin] (1,1.5) arc (180:360:1cm) -- (3,4) arc (0:180:1cm) -- cycle;


				\draw[->] (2,0) -- (2,2.75);
				\draw[->] (2,2) -- (2,5.5);
				\draw (2,3.8) -- (2,6);
				\node at (2,6.4) {$c$};

\draw[thin, dashed] (2,1.5) circle (1cm);				
\draw[thin, dashed] (2,4) circle (1cm);					

\filldraw (2,1.118) circle (1pt) node[right] {\scriptsize hyperbolic initial center};						
\filldraw (2,3.873) circle (1pt) node[right, yshift=-3pt] {\scriptsize hyperbolic end center};						

\filldraw[color=dunkelrot] (2,1.5) circle (1pt) node[right] {\scriptsize Euclidean initial center};			
\filldraw[color=dunkelrot] (2,4) circle (1pt) node[right, yshift=3pt] {\scriptsize Euclidean end center};													

\end{tikzpicture}

\caption{Construction of a stretched ball in the upper half space model of $\H^n$. The dashed balls are the initial ball (bottom) and the end ball (top).}
\label{Bild_Parabolisch_gestreckte_Kugel_Konstruktion}

\end{figure}
}

Recall that $G'$ denotes the maximal parabolic subgroup of $\Gamma$ corresponding to $z$. We will always assume that $U$ is entirely contained in the $\epsilon(n)$-thin part of $\H^n$ with respect to $G'$, i.e. $G'_{\epsilon(n)}(x) = \langle g \in G' : d_g(x) < \epsilon(n) \rangle$ is infinite for all $x \in U$. This way, we can reduce the group action of $\Gamma$ on and around $U$ to the action of $G'$, which behaves nicely with respect to the construction of $U$: since the geodesics going to $z$ -- which fiber $U$ -- are permuted by the elements of $g \in G'$ and $G$ preserves the horospheres around $z$, we see that $g U$ is also a stretched ball. Now $g U$ has the same radius as $U$; additionally, if $y$ denotes the initial center of $U$, then $g y$ is the initial center of $g U$, and $y$ and $g y$ lie in the same horosphere around $z$.

If the initial centers of two intersecting stretched balls lie in the same horosphere, we get the following estimate on their distance.

\begin{lem}\label{Lem:Parabolisch_gestreckte_Kugeln_Abstand}
Let $U$ and $U'$ be stretched balls with initial balls $B_{\mu}(y)$ and $B_{\mu'}(y')$. If $y$ and $y'$ lie in the same horosphere around $z$ and $U \cap U' \neq \emptyset$, then $d(y, y') < 2\mu + 2\mu'$.
\end{lem}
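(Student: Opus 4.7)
The plan is to exploit that $y$ and $y'$ lie on a common horosphere around $z$ and to compare the two stretched balls through that horosphere via the Busemann projection toward $z$. In the upper half space model with $z = \infty$, the geodesics underlying $U$ and $U'$ are vertical half lines above points $x^{(0)}, x^{(0)\prime} \in \R^{n-1}$, and $y$, $y'$ share a common Euclidean height $t_y$ characterizing the horosphere.

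The first ingredient is a direct identification inside the upper half space model: using the distance formulas recalled in the preliminaries, one finds that the hyperbolic initial center of $U$ is $y = (x^{(0)}, t_y)$ with $t_y = \sqrt{(t_0')^2 - r^2}$, and that the hyperbolic initial radius $\mu$ is related to the Euclidean radius $r$ by the key relation $r = t_y \sinh \mu$. The analogous relation $r' = t_y \sinh \mu'$ holds for $U'$.

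Now pick $p = (x_p, t_p) \in U \cap U'$. Because $U$ is a union of Euclidean $r$-balls whose centers all lie on the vertical line above $x^{(0)}$, one automatically obtains $\|x_p - x^{(0)}\| < r$, and analogously $\|x_p - x^{(0)\prime}\| < r'$. Let $q := (x_p, t_y)$ denote the Busemann projection of $p$ onto the horosphere through $y$ and $y'$. Applying the third distance formula from the preliminaries (distance between points at the same height $t_y$) yields
\[
d(y, q) \;=\; 2 \arsinh\!\left( \frac{\|x_p - x^{(0)}\|}{2 t_y} \right) \;<\; 2 \arsinh\!\left( \frac{\sinh \mu}{2} \right) \;<\; 2\mu,
\]
using monotonicity of $\arsinh$ and $\sinh \mu / 2 < \sinh \mu$. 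The same argument gives $d(y', q) < 2\mu'$, and the triangle inequality $d(y, y') \leq d(y, q) + d(q, y')$ then closes the proof.

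The main obstacle is conceptual rather than computational: one has to recognize that the two stretched balls, though built on different base geodesics, can be compared through the common horosphere containing $y$ and $y'$, and that the horospherical spread of any point of $U$ is controlled purely by the Euclidean radius $r$ (rather than by the much larger hyperbolic extent of $U$ in the radial direction toward $z$). Once this viewpoint and the identification $r = t_y \sinh \mu$ are in place, the estimate reduces to a single horospherical distance computation together with a triangle inequality.
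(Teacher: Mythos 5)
Your proof is correct and follows essentially the same route as the paper: project an intersection point of $U$ and $U'$ down to the common horosphere, bound the horospherical distance to each initial center via the same-height distance formula, and conclude with the triangle inequality. The only cosmetic difference is that you use the exact identity $r = t_y \sinh\mu$ relating the Euclidean and hyperbolic initial radii, whereas the paper contents itself with the slightly weaker estimate $r < t_y(e^{\mu}-1)$, but both yield the bound $d(y,q) < 2\mu$ in the same way.
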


\begin{proof}
Let $r$ denote the Euclidean radius in the construction of $U$ and $y = (x_y, t_y)$; the horosphere $HS$ around $z$ containing $U$ is thus given by $\R^{n-1} \times \{ t_y \}$. The hyperbolic initial ball $B_{\mu}(y)$ of $U$ can also be seen as a Euclidean ball of radius $r$ around some point $\widetilde{y} = (x_{\widetilde{y}}, t_{\widetilde{y}})$, where $x_{\widetilde{y}} = x_y$ and $t_{\widetilde{y}} > t_y$. With the usual distance formulas, we see that the hyperbolic initial radius $\mu$ is given by
\[
\mu \stackrel{!}{=} d( (x_y, t_{\widetilde{y}} + r), (x_y, t_y) ) = \ln \left( \frac{t_{\widetilde{y}} + r}{t_y} \right),
\]
so $t_{\widetilde{y}} + r = t_y \cdot e^{\mu}$. Using $t_{\widetilde{y}} > t_y$ we conclude
\[
r = t_y \cdot e^{\mu} - t_{\widetilde{y}} < t_y \cdot e^{\mu} - t_y = t_y \cdot (e^{\mu} - 1).
\]
Recall that for a point $(x, t_y) \in HS$, we have
\[
d((x, t_y), (x_y, t_y)) = 2 \cdot \arsinh \left( \frac{\| x - x_y \|}{2 \cdot t_y} \right).
\]
If $(x, t_y)$ also lies in the Euclidean $r$-ball $E$ around $y$, then $\| x - x_y \| < r < t_y \cdot (e^{\mu} - 1)$ and thus
\[
d((x, t_y), (x_y, t_y)) < 2 \cdot \arsinh \left( \frac{t_y \cdot (e^{\mu} - 1)}{2 \cdot t_y} \right) = 2 \cdot \arsinh \left( \frac{e^{\mu} - 1}{2} \right).
\]
As $\mu > 0$, also $(e^{\mu} - 1)/2 < (e^{\mu} - e^{-\mu})/2 = \sinh(\mu)$ holds. By monotonicity of $\arsinh$, we get $\arsinh \left( (e^{\mu} - 1)/2 \right) < \arsinh(\sinh(\mu)) = \mu$, so
\[
d((x, t_y), (x_y, t_y)) < 2 \cdot \arsinh \left( \frac{e^{\mu} - 1}{2} \right) < 2 \cdot \mu.
\]
Hence every point of $HS \cap E$ has a hyperbolic distance $< 2 \mu$ to $y$.

The same arguments hold for $U'$, thus every point in the Euclidean $r'$-ball $E'$ around $y'$, which also lies in the same horosphere $HS'$ around $z$ as $y'$, has hyperbolic distance $< 2 \mu'$ to $y'$; here, $r'$ denotes the Euclidean radius in the construction of $U'$.

Now observe that since $U \cap U' \neq \emptyset$, we have $E \cap E' \neq \emptyset$. By assumption, $HS \ni y$ and $HS' \ni y'$ coincide, hence there is a point $y'' \in HS \cap E \cap E'$ (also see Figure \ref{Bild_Parabolisch_gestreckte_Kugel_Abstand}). By the above arguments we have $d(y, y'') < 2\mu$ and $d(y', y'') < 2\mu'$, thus
\[
d(y, y') < 2\mu + 2\mu'.
\]

{
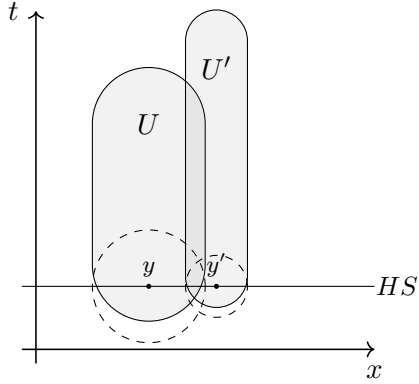
\begin{figure}
\centering

\begin{tikzpicture}[scale=0.75]




\draw[semithick, ->] (-0.25,0) -- (6,0);				
\node at (6,-0.4) {$x$};
\draw[semithick, ->] (0,-0.25) -- (0,6);				
\node at (-0.4,6) {$t$};


\fill[color=gray,opacity=0.1] (1,1.5) arc (180:360:1cm) -- (3,4) arc (0:180:1cm) -- cycle;
\draw[thin] (1,1.5) arc (180:360:1cm) -- (3,4) arc (0:180:1cm) -- cycle;


\draw[thin, dashed] (2,1.118) circle (1cm);				

\filldraw (2,1.118) circle (1pt) node[above] {\footnotesize$y$};			

\node at (2,4) {$U$};


\fill[color=gray,opacity=0.1] (2.651,1.293) arc (180:360:0.549cm) -- (3.749,5.472) arc (0:180:0.549cm) -- cycle;
\draw[thin] (2.651,1.293) arc (180:360:0.549cm) -- (3.749,5.472) arc (0:180:0.549cm) -- cycle;


\draw[thin, dashed] (3.2,1.118) circle (0.549cm);				

\filldraw (3.2,1.118) circle (1pt) node[above] {\footnotesize$y'$};			

\node at (3.2,5) {$U'$};


\draw (-0.25,1.118) -- (6,1.118);
\node at (6.4,1.118) {\small$HS$};

\end{tikzpicture}

\caption{Situation in the proof of Lemma \ref{Lem:Parabolisch_gestreckte_Kugeln_Abstand}. The dashed balls are the Euclidean $r$- and $r'$-balls $E$ and $E'$ around the hyperbolic initial centers $y$ and $y'$, respectively.}
\label{Bild_Parabolisch_gestreckte_Kugel_Abstand}

\end{figure}
}
\end{proof}

The above Lemma \ref{Lem:Parabolisch_gestreckte_Kugeln_Abstand} assumes that the initial centers lie in the \textit{same} horosphere, so we will need another construction to be able to compare stretched balls with initial centers in \textit{different} horospheres. If $B_{\mu}(y)$ is the initial ball of a stretched ball $U$ and $r$ the Euclidean radius of $U$, let $c_y$ be the geodesic from $y$ to the parabolic fixed point $z$. For a horosphere $HS'$ other than the horosphere $HS \ni y$, let $y'$ be the unique intersection of $c_y$ with $HS'$. The \textbf{comparison ball} $U'$ of $U$ at height $HS'$ is defined as the stretched ball with (hyperbolic) initial center $y'$, the same end ball as $U$ and using the same Euclidean radius $r$ in the construction, see Figure \ref{Bild_Parabolisch_gestreckte_Kugel_Vergleichskugel}.

{
\begin{figure}
\centering

\begin{tikzpicture}[scale=0.75]


\begin{scope}[xshift=-6cm]


\draw[semithick, ->] (-0.25,0) -- (4,0);				
\node at (4,-0.4) {$x$};
\draw[semithick, ->] (0,-0.25) -- (0,6);				
\node at (-0.4,6) {$t$};


\draw[->] (2,0) -- (2,0.5);
\draw[->] (2,0.5) -- (2,3.25);
\draw (2,3.25) -- (2,4);
\node at (1.6,3.6) {$c_y$};


\fill[color=gray,opacity=0.1] (1,2) arc (180:360:1cm) -- (3,5) arc (0:180:1cm) -- cycle;
\draw[thin] (1,2) arc (180:360:1cm) -- (3,5) arc (0:180:1cm) -- cycle;

\draw[thin, dashed] (2,2) circle (1cm);				

\filldraw (2,1.732) circle (1pt);			
\node at (2.18,1.92) {\footnotesize$y$};

\node at (2,5) {$U$};


\draw (-0.25,1.732) -- (4,1.732);
\node at (4.4,1.732) {\small$HS$};						

\draw (-0.25,1.118) -- (4,1.118);
\node at (4.4,1.118) {\small\,$HS'$};						
\filldraw (2,1.118) circle (1pt);
\node at (2.24,1.37) {\footnotesize$y'$};			

\draw (-0.25,2.291) -- (4,2.291);
\node at (4.4,2.291) {\small\,\,$HS''$};					
\filldraw (2,2.291) circle (1pt);
\node at (2.31,2.53) {\footnotesize$y''$};			

\end{scope}


\begin{scope}[xshift=0cm]


\draw[semithick, ->] (-0.25,0) -- (4,0);				
\node at (4,-0.4) {$x$};
\draw[semithick, ->] (0,-0.25) -- (0,6);				
\node at (-0.4,6) {$t$};


\fill[color=gray,opacity=0.1] (1,1.5) arc (180:360:1cm) -- (3,5) arc (0:180:1cm) -- cycle;
\draw[thin] (1,1.5) arc (180:360:1cm) -- (3,5) arc (0:180:1cm) -- cycle;

\draw[thin, dashed] (2,1.5) circle (1cm);				

\filldraw (2,1.118) circle (1pt) node[above] {\footnotesize$y'$};			

\node at (2,5) {$U'$};


\draw (-0.25,1.118) -- (4,1.118);
\node at (4.4,1.118) {\small\,$HS'$};

\end{scope}


\begin{scope}[xshift=6cm]


\draw[semithick, ->] (-0.25,0) -- (4,0);				
\node at (4,-0.4) {$x$};
\draw[semithick, ->] (0,-0.25) -- (0,6);				
\node at (-0.4,6) {$t$};


\fill[color=gray,opacity=0.1] (1,2.5) arc (180:360:1cm) -- (3,5) arc (0:180:1cm) -- cycle;
\draw[thin] (1,2.5) arc (180:360:1cm) -- (3,5) arc (0:180:1cm) -- cycle;

\draw[thin, dashed] (2,2.5) circle (1cm);				

\filldraw (2,2.291) circle (1pt) node[above] {\footnotesize$y''$};			

\node at (2,5) {$U''$};


\draw (-0.25,2.291) -- (4,2.291);
\node at (4.4,2.291) {\small\,\,$HS''$};

\end{scope}

\end{tikzpicture}

\caption{In the left picture, we see the stretched ball $U$ with initial center $y \in HS$ as well as the intersections $y' \in HS'$ and $y'' \in HS''$ of $c_y$; these are used as the initial centers of the comparison balls $U'$ and $U''$, respectively. The dashed balls are the initial balls of $U$, $U'$ and $U''$.
}
\label{Bild_Parabolisch_gestreckte_Kugel_Vergleichskugel}

\end{figure}
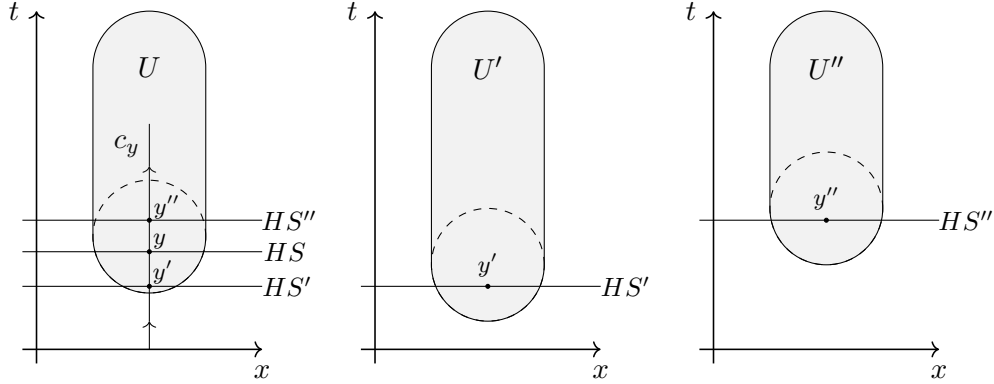
}

In other words, we just choose the parameter $t'_0$ differently. In our later applications, the comparison ball $U'$ will always be well-defined, i.e. the new initial center $y'$ will always be farther away from $z$ than the end center (which coincides with the end center of $U$).

We immediately see that the comparison ball $U'$ intersects precisely those geodesics to $z$ that also intersect with $U$. Since the distance between these geodesics decreases on its way to $z$, we get the following estimates for the hyperbolic initial radius $\mu'$ of $U'$: if $HS'$ is closer to $z$ than $HS$, we have $\mu' \leq \mu$; whereas if $HS'$ is farther away from $z$ than $HS$, we have $\mu' \geq \mu$. Also note that $U$ intersects another stretched ball $U''$ with initial center $y'' \in HS''$ if and only if the comparison ball $U'$ of $U$ at height $HS''$ intersects $U''$.

Similar to the case of ordinary balls (compare Lemma \ref{Lem:Durchschnitt_Kugeln_faltbar}), suitably chosen stretched balls are foldable:

\begin{lem}\label{Lem:Parabolisch_gestreckte_Kugeln_faltbar}
Let $Y \in \Sigma_i$ be a singular submanifold containing the parabolic fixed point $z$. Moreover, let $y \in Y \setminus S_{<i}$ and $\mu > 0$ be sufficiently small, such that $Y$ is fixed pointwise by $\Gamma_{4\mu}(y)$. If $U$ denotes a stretched ball with (hyperbolic) initial ball $B_{\mu}(y)$, then $U$ is $Y$-foldable.
\end{lem}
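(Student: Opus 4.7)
The plan is to verify the four defining properties of a $Y$-foldable set for $U$ in turn; the main obstacle is precise invariance (property~1), which requires reducing to the stabilizer $G' = \Gamma_z$ of the cusp and then combining Lemma~\ref{Lem:Parabolisch_gestreckte_Kugeln_Abstand} with the smallness hypothesis on $\mu$ to upgrade $\gamma U \cap U \neq \emptyset$ to $\gamma U = U$. The essential geometric input is that in the upper half space model with $z = \infty$, the singular submanifold $Y$ takes the form $Y = Y' \times \R_{>0}$ for some affine subspace $Y' \subseteq \R^{n-1}$ with $x_y \in Y'$ (where $y = (x_y, t_y)$); in particular, the entire vertical geodesic $c$ from $y$ to $z$ lies in $Y$. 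Convexity of $U$ has already been noted, and elements of $G'$ act on $\H^n$ as $(x, t) \mapsto (Ax + b, t)$ with $A \in O(n-1)$ and $b \in \R^{n-1}$, in particular preserving the $t$-coordinate and mapping Euclidean balls of fixed radius to Euclidean balls of the same radius.

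For property~1, suppose $\gamma \in \Gamma$ with $\gamma U \cap U \neq \emptyset$. Since $U$ is entirely contained in the $\epsilon(n)$-thin part with respect to $G'$, the standard cusp reduction (as used in the setup preceding the lemma) forces $\gamma \in G'$. Then $\gamma U$ is again a stretched ball with (hyperbolic) initial radius $\mu$ and initial center $\gamma y$, and $\gamma y$ lies in the same horosphere around $z$ as $y$. Lemma~\ref{Lem:Parabolisch_gestreckte_Kugeln_Abstand} applied to $U$ and $\gamma U$ yields $d(y, \gamma y) < 4\mu$, so $\gamma \in \Gamma_{4\mu}(y)$. By hypothesis, $\Gamma_{4\mu}(y)$ fixes $Y$ pointwise, and in particular $\gamma y = y$, i.e.\ $\gamma x_y = x_y$ under the Euclidean action on $\R^{n-1}$. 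This forces $\gamma c = c$ pointwise and, since the stretched ball is determined by its axis $c$ together with the Euclidean radius (both $\gamma$-invariant), $\gamma U = U$. Property~2 is handled in the same way: any $\gamma \in \Gamma_U$ certainly satisfies $\gamma U \cap U \neq \emptyset$, so the argument above places $\gamma$ in $\Gamma_{4\mu}(y)$, which fixes $Y$ pointwise by hypothesis.

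For property~3, I would observe that each Euclidean ball appearing in the definition of $U$ is itself a hyperbolic ball centered at a point of $c$, so $U$ can be written as a union of hyperbolic balls $B_{\mathrm{hyp}}(p, \mu(p))$ with $p \in c$. The projection $\pi_Y \colon \H^n \to Y$ is $1$-Lipschitz and fixes $c \subseteq Y$ pointwise, so each such hyperbolic ball is mapped into itself under $\pi_Y$; hence $\pi_Y(U) \subseteq U$. For property~4, the intersection $U \cap Y$ is convex as the intersection of two convex subsets of $\H^n$, hence in particular contractible. By Lemma~\ref{Lem:Gefaltete_Mengen_U_geschnitten_Y_injektiv} (whose hypotheses are precisely properties~1 and~2, already verified), $U \cap Y$ maps homeomorphically onto $\pi(U \cap Y)$, which is therefore contractible as well, completing the verification that $U$ is $Y$-foldable.
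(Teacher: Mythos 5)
Your proof is correct and follows essentially the same approach as the paper's: reduce precise invariance to the cusp stabilizer $G'$, apply Lemma~\ref{Lem:Parabolisch_gestreckte_Kugeln_Abstand} to obtain $d(y,\gamma y) < 4\mu$, use the hypothesis on $\Gamma_{4\mu}(y)$ to conclude $\gamma y = y$, and then verify properties 3 and 4 via the 1-Lipschitz projection onto $Y$ and the convexity of $U\cap Y$ combined with Lemma~\ref{Lem:Gefaltete_Mengen_U_geschnitten_Y_injektiv}. You are slightly more explicit than the paper in justifying why $\gamma y = y$ forces $\gamma U = U$ (by noting that $\gamma$ then fixes the axis $c$ pointwise and, as a Euclidean motion of the horospherical coordinates, preserves the Euclidean radius), but the underlying strategy is identical.
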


\begin{proof}
We have already seen that stretched balls are open and convex, so let's turn to the precise invariance of $U$. Since we always assume that $U$ lies in the $\epsilon(n)$-thin part of $\H^n$ w.r.t. $G$ (where $G$ is the maximal parabolic subgroup corresponding to the parabolic fixed point $z$) -- and that part is (seen as a component of the $\epsilon(n)$-thin part w.r.t. $\Gamma$) precisely invariant under $\Gamma$ --, it only remains to show the precise invariance of $U$ w.r.t. elements of $G$. Hence, we have to check if for all $g \in G$, the condition $g U \cap U \neq \emptyset$ already implies $g U = U$. As $g \in G$, we know that $gU$ is a stretched ball with same initial radius as $U$ and initial center $gy$ in the same horosphere around $z$ as $y$. Since $gU \cap U \neq \emptyset$, we can apply Lemma \ref{Lem:Parabolisch_gestreckte_Kugeln_Abstand} and get
\[
d_g(y) = d(y, gy) < 2\mu + 2\mu = 4\mu,
\]
i.e. $g \in \Gamma_{4\mu}(y)$. By assumption, this means that $g$ fixes $Y$ pointwise, thus $gy = y$ and hence $gU = U$.

As a next step, we will prove that $Y$ is fixed pointwise by $\Gamma_U$. Note that in our situation, we have $\Gamma_U = G_U$, where $G_U = \{ g \in G : gU = U \}$. By the above arguments we know that $g \in G_U$ already implies $d_g(y) < 4\mu$, so $g \in \Gamma_{4\mu}(y)$; hence by the assumption, $g$ fixes $Y$ pointwise.

We will now prove $\pi_Y(U) \subseteq U$. As every $u \in U$ lies in a suitable hyperbolic ball $B_{\mu_0}(y_0) \subseteq U$ of radius $\mu_0$ around some $y_0 \in Y$ (recall that the geodesic $c$ in the construction of $U$ is entirely contained in $Y$), we get (using that the projection to $Y$ is distance-decreasing, compare \cite{BGS} chapter 1.6)
\[
d(\pi_Y(u), \pi_Y(y_0)) \leq d(u, y_0) < \mu_0.
\]
Since $\pi_Y(y_0) = y_0$, this implies $\pi_Y(u) \in B_{\mu_0}(y_0) \subseteq U$.

In the last step, we have to show that $\pi(U \cap Y) \subseteq \H^n\slash\Gamma$ is contractible; by Lemma \ref{Lem:Gefaltete_Mengen_U_geschnitten_Y_injektiv}, this is equivalent to $U \cap Y \subseteq \H^n$ being contractible. Now recall that $U$ and $Y$ are convex, so $U \cap Y$ is convex and thus contractible.
\end{proof}

We also need a similar statement for the intersection of (the images of) several stretched balls:

\begin{lem}\label{Lem:Durchschnitt_parabolisch_gestreckte_Kugeln_faltbar}
Let $U_1$ be a $Y$-foldable stretched ball as in Lemma \ref{Lem:Parabolisch_gestreckte_Kugeln_faltbar}, i.e. with initial center $y_1 \in Y \setminus S_{<i}$, and let $U_2, \ldots, U_k$ be foldable stretched balls with initial centers $y_2, \ldots, y_k \in Y$; note that we don't assume $y_2, \ldots, y_k \in Y \setminus S_{<i}$ and hence the $U_2, \ldots, U_k$ will in general not be $Y$-foldable. If $U := \bigcap_{j=1}^k U_j \neq \emptyset$, then:
\begin{enumerate}
\item $U$ is $Y$-foldable.
\item If the hyperbolic initial radii $\mu_j$ of the $U_j$ at the initial centers $y_j$ are chosen in a way such that $y_j$ is fixed by $\Gamma_{8\mu_j}(y_j)$ ($j=1,\ldots,k$), then $\pi(U) = \bigcap_{j=1}^k \pi(U_j)$.

Hence the intersection $\bigcap_{j=1}^k \pi(U_j)$ of the $\pi(U_j)$ is folded and thus contractible.
\end{enumerate}
\end{lem}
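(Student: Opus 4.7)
The strategy is to adapt the proof of Lemma \ref{Lem:Durchschnitt_Kugeln_faltbar}b) to stretched balls; once the extra factor of $2$ coming from Lemma \ref{Lem:Parabolisch_gestreckte_Kugeln_Abstand} is accounted for (which is exactly why the hypothesis reads $\Gamma_{8\mu_j}(y_j)$ rather than $\Gamma_{4\mu_j}(y_j)$ as in the ordinary-ball case), the same verification of $Y$-foldability and the same orbit-matching argument for $\pi(U) = \bigcap_j \pi(U_j)$ carry over.

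For statement 1, the four defining conditions of $Y$-foldability will be checked in turn. Convexity of $U = \bigcap_j U_j$ is immediate. Precise invariance passes from the $U_j$ to $U$: if $\gamma U \cap U \neq \emptyset$ then $\gamma U_j \cap U_j \neq \emptyset$ for every $j$, so $\gamma U_j = U_j$ for all $j$ and hence $\gamma U = U$; consequently $\Gamma_U \subseteq \Gamma_{U_1}$, so $\Gamma_U$ fixes $Y$ pointwise. For $\pi_Y(U) \subseteq U$, the key observation is that each geodesic $c_j$ from $y_j \in Y$ to the parabolic fixed point $z \in Y$ lies entirely in $Y$ (by total geodesicity), so every $U_j$ is a union of hyperbolic balls centered on $c_j \subseteq Y$; the distance-decreasing property of $\pi_Y$ then forces $\pi_Y(U_j) \subseteq U_j$ for each $j$. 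Finally, $U \cap Y$ is convex as an intersection of convex sets, so applying Lemma \ref{Lem:Gefaltete_Mengen_U_geschnitten_Y_injektiv} to the pair $(U, Y)$ gives $\pi(U \cap Y) \cong U \cap Y$, which is therefore contractible.

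For statement 2, the inclusion $\pi(U) \subseteq \bigcap_j \pi(U_j)$ is free. For the converse, given $p \in \bigcap_j \pi(U_j)$ I pick lifts $u_j \in U_j$ with $\pi(u_j) = p$ and write $u_j = \delta_j u_1$ for some $\delta_j \in \Gamma$. Because every $U_j$ lies in the $\epsilon(n)$-thin component around the cusp at $z$ and that component is precisely invariant under $\Gamma$, necessarily $\delta_j \in G'$, so $\delta_j$ preserves horospheres around $z$. The whole argument reduces to showing $\delta_j U_j = U_j$: this gives $u_1 = \delta_j^{-1} u_j \in U_j$ for every $j$, hence $u_1 \in U$ and $p \in \pi(U)$.

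The main obstacle is therefore the estimate $d(y_j, \delta_j y_j) < 8\mu_j$, which by the standing hypothesis forces $\delta_j y_j = y_j$; together with $\delta_j \in G'$ this implies that $\delta_j$ fixes both $y_j$ and $z$, hence preserves the geodesic $c_j$ and acts Euclidean-isometrically on each horosphere around $z$, so that $\delta_j U_j = U_j$. To obtain the estimate, I replace $U_1$ by its comparison ball $U_1'$ at the horosphere $HS_j$ containing $y_j$; the remark preceding Lemma \ref{Lem:Parabolisch_gestreckte_Kugeln_faltbar} yields $U_1' \cap U_j \neq \emptyset$ (from $U \subseteq U_1 \cap U_j$) and $\delta_j U_1' \cap U_j \neq \emptyset$ (from $u_j \in \delta_j U_1 \cap U_j$ together with the fact that $\delta_j \in G'$ makes $\delta_j U_1'$ the comparison ball of $\delta_j U_1$ at $HS_j$). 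Applying Lemma \ref{Lem:Parabolisch_gestreckte_Kugeln_Abstand} to these two pairs of stretched balls with initial centers on $HS_j$ gives $d(y_1', y_j), d(\delta_j y_1', y_j) < 2\mu_1' + 2\mu_j$, and the triangle inequality then yields $d(y_j, \delta_j y_j) < 4\mu_1' + 4\mu_j$. After arranging (by relabelling, exploiting that only $U_1$ plays the distinguished role of the $Y$-foldable ball) that $\mu_1' \leq \mu_j$, this bound becomes $\leq 8\mu_j$, which places $\delta_j$ inside $\Gamma_{8\mu_j}(y_j)$ and lets the hypothesis close the argument.
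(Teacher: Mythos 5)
Your proof of statement 1 matches the paper's essentially line by line, and your overall strategy for statement 2 — lift the common point, reduce the deck transformation to $G'$, pass to comparison balls at the horosphere of $y_j$, and use Lemma \ref{Lem:Parabolisch_gestreckte_Kugeln_Abstand} twice plus the triangle inequality — is the paper's as well. However, there is a genuine gap in the last step. Your estimate gives $d(y_j, \delta_j y_j) < 4\mu_1' + 4\mu_j$, where $\mu_1'$ is the hyperbolic initial radius of the comparison ball of $U_1$ at $HS_j$, and you then claim $\mu_1' \leq \mu_j$ ``after arranging by relabelling.'' This cannot be arranged: the index $1$ is pre-assigned by the hypothesis as the (unique) ball whose initial center lies in $Y \setminus S_{<i}$, so you are not free to relabel. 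Moreover, whether $\mu_1' \leq \mu_j$ holds is governed by the \emph{Euclidean} radii: on a fixed horosphere the hyperbolic initial radius increases with the Euclidean radius, so $\mu_1' \leq \mu_j$ holds precisely when $r_1 \leq r_j$ — and nothing forces the $Y$-foldable ball $U_1$ to have the smallest Euclidean radius.

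The correct move (which is what the paper does) is to decouple the pivot for statement 2 from the distinguished ball of statement 1: pick $j_0 \in \{1,\ldots,k\}$ with $r_{j_0} \leq r_j$ for all $j$, normalize so that the lift in $U_{j_0}$ is the reference (equivalently $\gamma_{j_0} = \id$), and form comparison balls of $U_{j_0}$ at each $HS_j$. Since the comparison ball keeps the Euclidean radius $r_{j_0}$ and both its initial center and $y_j$ lie on $HS_j$, minimality of $r_{j_0}$ immediately gives $\mu_{j_0}^{(j)} \leq \mu_j$, hence $d(y_j, \gamma_j y_j) < 4\mu_j + 4\mu_{j_0}^{(j)} \leq 8\mu_j$. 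The $Y$-foldability of the intersection needed to conclude ``folded, hence contractible'' comes from statement 1 and does not require the pivot to be $U_1$; this is exactly the freedom your argument needs and currently lacks.
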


\begin{proof}
Many ideas are similar to those in the proof of \cite{Samet} Proposition 4.10 and its preceding text.
\begin{enumerate}
\item As an intersection of open, convex and precisely invariant sets, $U$ itself is open, convex and precisely invariant.

Since $\Gamma_{U_1}$ fixes $Y$ pointwise (because $U_1$ is $Y$-foldable), we only need to show that $\Gamma_U$ is a subgroup of $\Gamma_{U_1}$ to conclude that $\Gamma_U$ fixes $Y$ pointwise. Let $\gamma \in \Gamma_U$. As $\gamma U = U$ and $U \subseteq U_1$, we see that
\[
\gamma U_1 \supseteq \gamma U = U \subseteq U_1,
\]
so $\gamma U_1 \cap U_1 \neq \emptyset$ (because $U \neq \emptyset$). Knowing that $U_1$ is precisely invariant, this leads to $\gamma U_1 = U_1$, hence $\gamma \in \Gamma_{U_1}$.

Next, let us check if $\pi_Y(U) \subseteq U$. Let $c_j$ denote the geodesic from the initial center $y_j \in Y$ of $U_j$ to the parabolic fixed point $z$; note that all the $c_j$ are entirely contained in $Y$. Hence the $U_j$ are constructed as the union of hyperbolic balls around points of $c_j \subseteq Y$. So if $u \in U$, there are points $y_0^{(j)} \in Y$ and radii $\mu_0^{(j)}$ ($j=1,\ldots,k$) such that $u$ lies in every ball $B_{\mu_0^{(j)}}(y_0^{(j)}) =: B_j \subseteq U_j$. Just as in the proof of Lemma \ref{Lem:Parabolisch_gestreckte_Kugeln_faltbar} we conclude that $\pi_Y(u) \in B_j$ for all $j=1,\ldots,k$. Thus
\[
\pi_Y(u) \in \bigcap_{j=1}^k B_j \subseteq \bigcap_{j=1}^k U_j = U,
\]
proving $\pi_Y(U) \subseteq U$.

It remains to show that $\pi(U \cap Y)$ is contractible. Again, this is equivalent to $U \cap Y$ being contractible (by Lemma \ref{Lem:Gefaltete_Mengen_U_geschnitten_Y_injektiv}), which itself is a consequence of the convexity of $U$ and $Y$.

\item Similar to \cite{Samet} Proposition 4.10, we see that a preimage of $\bigcap_{j=1}^k \pi(U_j)$ in $\H^n$ is just a union of intersections $\bigcap_{j=1}^k \gamma_j U_j$ ($\gamma_j \in \Gamma$), so it only remains to show that every such non-empty intersection $\bigcap_{j=1}^k \gamma_j U_j$ arises as a translate of $\bigcap_{j=1}^k U_j$ under a suitable element of $\Gamma$.

Let $r_j$ be the Euclidean radius in the construction of $U_j$ and choose $j_0 \in \{ 1, \ldots, k \}$ such that $r_{j_0} \leq r_j$ for all $j=1,\ldots,k$. After a possible translation of $\bigcap_{j=1}^k \gamma_j U_j$ by $\gamma_{j_0}^{-1}$, we can assume that $\gamma_{j_0} = \id$. As $\bigcap_{j=1}^k \gamma_j U_j \neq \emptyset$ by assumption, $U_{j_0}$ intersects all the other $\gamma_j U_j$.

We let $U_{j_0}^{(j)}$ denote the comparison ball of $U_{j_0}$ at the height of the horosphere $HS_j \ni y_j$ (for $j=1,\ldots,k$), and $\mu_{j_0}^{(j)}$ the (hyperbolic) initial radius around the (hyperbolic) initial center $y_{j_0}^{(j)}$. Since $U_{j_0}$ intersects $U_j$ and $\gamma_j U_j$, also $U_{j_0}^{(j)}$ intersects $U_j$ and $\gamma_j U_j$ (see definition of the comparison balls). As the Euclidean radius $r_{j_0}$ was chosen to be minimal -- and the Euclidean radius of the comparison ball coincides with $r_{j_0}$ --, the hyperbolic initial radius $\mu_{j_0}^{(j)}$ also has to satisfy $\mu_{j_0}^{(j)} \leq \mu_j$ for all $j=1,\ldots,k$. Using Lemma \ref{Lem:Parabolisch_gestreckte_Kugeln_Abstand}, we get $d(y_j, y_{j_0}^{(j)}) < 2\mu_j + 2\mu_{j_0}^{(j)} \leq 4\mu_j$ and $d(\gamma_j y_j, y_{j_0}^{(j)}) < 2\mu_j + 2\mu_{j_0}^{(j)} \leq 4\mu_j$, thus
\[
d_{\gamma_j}(y_j) = d(y_j, \gamma_j y_j) < 8\mu_j
\]
for every $j$. Hence $\gamma_j \in \Gamma_{8\mu_j}(y_j)$ and so by assumption, $\gamma_j$ fixes $y_j$. We conclude $\gamma_j U_j = U_j$ and thus
\[
\bigcap_{j=1}^k \gamma_j U_j = \bigcap_{j=1}^k U_j.
\]
So $\bigcap_{j=1}^k \pi(U_j) = \pi(U)$, the latter set being the image of the (by 1.) $Y$-foldable set $U$. Hence $\bigcap_{j=1}^k \pi(U_j)$ is folded and thus contractible.
\end{enumerate}
\end{proof}

\subsection{Constructing the cover}

Our next goal is to define a suitable cover of the thick part $M_+$. To this end, we will extend the construction of Samet \cite{Samet} Theorem 4.2: while the cover given there is indeed a good cover and contains $M_+$, it is far from being homotopy equivalent to $M_+$; it goes well beyond $M_+$ and, in general, will have gaps outside $M_+$. By gaps we mean that coming from the thin part (in a suitable way), we might enter and leave the cover several times before entering it for a last time and staying in $M_+$. To fill these gaps, we will use the previously defined stretched balls, eventually giving us a good cover that is also homotopy equivalent to $M_+$.

To achieve all the said properties, a very delicate choice of positions and sizes of the covering sets will be required; the following two lemmas are a major tool for this.

\begin{lem}[\cite{Samet} Proposition 4.6]\label{Samet4.6}
For every $\epsilon_1 > 0$ there is $\epsilon_2 = \epsilon_2(\epsilon_1) > 0$ with the following property. Let $Y_1, Y_2 \in \Sigma(\Gamma)$ with $i = \dim(Y_2) \leq \dim(Y_1)$ and $y_j \in Y_j \cap X_+$ ($j=1,2$), such that $d(y_1, S_{<i}(\Gamma)) \geq \epsilon_1$ or $d(y_2, S_{<i}(\Gamma)) \geq \epsilon_1$. If $d(y_1, y_2) < \epsilon_2$, then already $Y_2 \subseteq Y_1$.
\end{lem}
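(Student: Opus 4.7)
The plan is to combine the thick-part hypothesis $y_1, y_2 \in X_+$ with a centre-of-mass construction in the Hadamard manifold $X$. Write $Y_j = F(G_j)$ for finite subgroups $G_j < \Gamma$ (for instance, taking $G_j$ to be the pointwise stabiliser of $Y_j$ in $\Gamma$), and set $H := \langle G_1, G_2 \rangle$. The key observation is that $F(H) = F(G_1) \cap F(G_2) = Y_1 \cap Y_2$, so controlling this common fixed set is equivalent to controlling whether $Y_2 \subseteq Y_1$.

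First I would bound the displacement of the generators of $H$ at $y_2$. For $g \in G_1$ the triangle inequality together with $g y_1 = y_1$ gives
\[
d_g(y_2) = d(y_2, g y_2) \leq d(y_2, y_1) + d(g y_1, g y_2) = 2\,d(y_1, y_2) < 2\epsilon_2,
\]
and for $g \in G_2$ trivially $d_g(y_2) = 0$. Hence every generator of $H$ lies in $\Gamma_{2\epsilon_2}(y_2)$. Requiring $2\epsilon_2 \leq \epsilon$, we obtain $H \subseteq \Gamma_{\epsilon_\Gamma}(y_2)$, which is finite because $y_2 \in X_+$. In particular $|H| \leq \eta$, so every element of $H$ is represented by a word of length at most $\eta - 1$ in $G_1 \cup G_2$ (Cayley-graph diameter), and iterating the triangle inequality yields $d_h(y_2) < 2(\eta-1)\epsilon_2$ for every $h \in H$. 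The symmetric estimate $d_h(y_1) < 2(\eta-1)\epsilon_2$ holds at $y_1$.

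Since $H$ is finite, the function $x \mapsto \sum_{h \in H} d(x, h y_2)^2$ is $H$-invariant and strictly convex on the Hadamard manifold $X$, hence admits a unique minimiser $c$ which is automatically $H$-fixed; moreover the closed ball $\overline{B}_{2(\eta-1)\epsilon_2}(y_2)$ is convex and contains the orbit $H \cdot y_2$, so it also contains $c$. Thus $c \in F(H) = Y_1 \cap Y_2$ with $d(y_2, c) \leq 2(\eta-1)\epsilon_2$. The analogous barycentre $c'$ of $H \cdot y_1$ lies in $Y_1 \cap Y_2$ with $d(y_1, c') \leq 2(\eta-1)\epsilon_2$.

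To conclude I would distinguish the dimension of $Y_1 \cap Y_2$. If $\dim(Y_1 \cap Y_2) = i$, then $Y_1 \cap Y_2$ and $Y_2$ are both complete, totally geodesic submanifolds of the same dimension with $Y_1 \cap Y_2 \subseteq Y_2$, forcing equality and hence $Y_2 \subseteq Y_1$, as desired. Otherwise $Y_1 \cap Y_2 \in \Sigma_{<i}(\Gamma)$, so $Y_1 \cap Y_2 \subseteq S_{<i}(\Gamma)$; then one of
\[
d(y_1, S_{<i}(\Gamma)) \leq d(y_1, c'), \qquad d(y_2, S_{<i}(\Gamma)) \leq d(y_2, c)
\]
is bounded by $2(\eta-1)\epsilon_2$, contradicting the dichotomy in the hypothesis as soon as we choose $\epsilon_2 := \min\bigl(\epsilon/2,\; \epsilon_1/(2\eta)\bigr)$. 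The main obstacle is ensuring that $H$ really is finite: this is exactly where the assumption $y_1, y_2 \in X_+$ together with the uniform lower bound $\epsilon_\gamma \geq \epsilon$ on the Margulis levels is essential; without it $H$ could a priori only be asserted to be virtually nilpotent, and the barycentre argument would collapse.
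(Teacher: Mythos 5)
Your proof is correct. A small remark on presentation and then a substantive comparison.

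The only wording slip is at the end: you write ``one of $d(y_1,S_{<i})\leq d(y_1,c')$, $d(y_2,S_{<i})\leq d(y_2,c)$ is bounded,'' but in fact \emph{both} are bounded by $2(\eta-1)\epsilon_2$, since $c,c'\in Y_1\cap Y_2\subseteq S_{<i}$ in the case $\dim(Y_1\cap Y_2)<i$; it is this simultaneous bound that contradicts the ``or'' in the hypothesis. The argument you clearly intend is right, so this is purely a matter of phrasing.

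On the mathematics: the chain is sound. Taking $G_j$ to be the full pointwise stabiliser of $Y_j$ (finite, since it is a discrete group fixing a point) gives $F(G_j)=Y_j$, and then $F(H)=Y_1\cap Y_2$ for $H=\langle G_1,G_2\rangle$. The displacement estimate $d_g(y_2)\leq 2d(y_1,y_2)$ for $g\in G_1$, combined with $2\epsilon_2\leq\epsilon\leq\epsilon_g$, places $H$ inside $\Gamma_{\epsilon_\Gamma}(y_2)$, so finiteness with $|H|\leq\eta$ follows from $y_2\in X_+$. The Cayley-graph diameter bound $|H|-1\leq\eta-1$ gives $d_h(y_2)<2(\eta-1)\epsilon_2$ for all $h$, the barycenter of $H\cdot y_2$ minimises a strictly convex $H$-invariant function and lies in the (convex) closed ball of radius $2(\eta-1)\epsilon_2$, and the final dichotomy on $\dim(Y_1\cap Y_2)$ works: equality of dimensions forces $Y_1\cap Y_2=Y_2$ (both are complete, connected, totally geodesic, and nested), while strict inequality places $c,c'$ in $S_{<i}$ and contradicts the hypothesis for $\epsilon_2\leq\epsilon_1/(2\eta)$. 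Even the edge cases $i=0$ and $\eta=1$ are handled implicitly, since $F(H)\neq\emptyset$ always.

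One caveat worth making explicit: the paper itself does not prove this lemma --- it imports it verbatim from Samet as Proposition~4.6, so there is no ``paper's own proof'' to compare with directly. Your barycenter-plus-Margulis strategy is very much in the spirit of the arguments in that reference (and in the present paper's own foldability lemmas, which run on the same fuel: small displacement at a thick-part point implies a common fixed point nearby). Note also that your $\epsilon_2$ depends on $\epsilon$ and $\eta$ as well as $\epsilon_1$; the lemma's statement suppresses this because those are fixed auxiliary constants of the setup, and the paper acknowledges this dependence elsewhere (``We have let our $X_+$ take the role of $X_{\geq\epsilon,m}$ \ldots''), so this is consistent with how the result is used.
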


\begin{lem}[\cite{Samet} Proposition 4.7]\label{Samet4.7}
For every $\epsilon_1 > 0$ there is $\epsilon_3 = \epsilon_3(\epsilon_1) > 0$ with the following property. Let $Y \in \Sigma(\Gamma)$, $y \in Y \cap X_+$ and $i = \dim(Y)$. If $d(y, S_{<i}(\Gamma)) > \epsilon_1$, then $Y$ is fixed pointwise by every element of $\Gamma_{\epsilon_1}(y)$.
\end{lem}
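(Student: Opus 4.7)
The plan is a standard contraposition argument: if $\Gamma_{\epsilon_3}(y)$ fails to fix $Y$ pointwise for a suitably chosen $\epsilon_3$, then a common fixed point of this group produces a point of $S_{<i}(\Gamma)$ close to $y$, contradicting the hypothesis $d(y, S_{<i}(\Gamma)) > \epsilon_1$. (I read the conclusion as referring to $\Gamma_{\epsilon_3}(y)$; the statement's use of $\epsilon_1$ there appears to be a typo, since $\epsilon_3$ would otherwise play no role.)

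First I would choose $\epsilon_3$ at most the Margulis constant $\epsilon(n)$ and at most $\min_\gamma \epsilon_\gamma$, so that $H := \Gamma_{\epsilon_3}(y) \subseteq \Gamma_{\epsilon_\Gamma}(y)$. By the definition of the thick part $X_+$, the latter is finite, and therefore $|H| \leq \eta$. Since $Y = F(G)$ for some finite $G < \Gamma$ fixing $y$ pointwise, every element of $G$ has displacement $0$ at $y$, so $G \subseteq H$. A finite group of isometries of a Hadamard manifold has a common fixed point (for instance the circumcenter of any orbit, well-defined by convexity of the distance function), so pick $y^* \in \FIX(H)$. Since $G \subseteq H$ we have $y^* \in \FIX(G) = Y$; and if $H$ does not fix $Y$ pointwise, then $\FIX(H) \subsetneq Y$ has dimension $< i$, placing $y^* \in S_{<i}(\Gamma)$.

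It then remains to bound $d(y, y^*)$ in terms of $\epsilon_3$. Taking $y^*$ as the circumcenter of the orbit $Hy$, one has $d(y, y^*) \leq \operatorname{diam}(Hy)$. Every $h \in H$ admits a word of length at most $|H| \leq \eta$ in generators with displacement $< \epsilon_3$ at $y$, so the triangle inequality gives $d(y, hy) < \eta \cdot \epsilon_3$ and thus $d(y, y^*) < 2 \eta \epsilon_3$. Choosing $\epsilon_3 < \epsilon_1/(2\eta)$ (together with the Margulis-type bounds above) yields $d(y, y^*) < \epsilon_1$, contradicting $d(y, S_{<i}(\Gamma)) > \epsilon_1$. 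The main obstacle is precisely this word-length/orbit-diameter bound: it is what introduces an implicit dependence on $\eta$ in $\epsilon_3$, which -- although suppressed in the statement -- is inherent and mirrors the role $\eta$ plays elsewhere in the paper.
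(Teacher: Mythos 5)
The paper does not prove this lemma; it cites Samet's Proposition 4.7 and relies on it as a black box, so there is no in-paper argument to compare against. Your reconstruction is correct and uses the natural argument: make $\epsilon_3$ small enough that $H:=\Gamma_{\epsilon_3}(y)$ sits inside the finite group $\Gamma_{\epsilon_\Gamma}(y)$ (so $|H|\le\eta$), take the Cartan/circumcenter fixed point $y^*$ of $H$, note that $y^*$ lies in $\FIX(H)\subseteq\FIX(G)=Y$, and observe that if $H$ fails to fix $Y$ pointwise then $\FIX(H)$ is a \emph{complete} totally geodesic proper subset of $Y$ and hence of strictly smaller dimension, so $y^*\in S_{<i}(\Gamma)$; finally the word-length bound $\le|H|\le\eta$ gives $d(y,y^*)<2\eta\epsilon_3<\epsilon_1$, contradicting the hypothesis. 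All steps are valid in a general Hadamard manifold, which is the level of generality needed here.

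Your two side remarks are also accurate. The conclusion should indeed read $\Gamma_{\epsilon_3}(y)$ rather than $\Gamma_{\epsilon_1}(y)$; this is confirmed by the way the lemma is invoked in the proof of Lemma~\ref{Lem:Ueberdeckung_Kugeln_zusammenziehbar}, where the inequality $24\mu_i\le\epsilon_3(\mu_{i-1})$ is precisely what is used to conclude that $\Gamma_{24\mu_i}(\widetilde{x})$ fixes $Y$ pointwise. And yes, $\epsilon_3$ implicitly depends on $\eta$ (and on $n$, $\nu$ through the Margulis $\epsilon$ and the requirement $\epsilon_3\le\epsilon$); the paper suppresses this in the statement but the dependence is consistent with, and absorbed into, the constants appearing in its main results. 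One minor point of bookkeeping: from $d(y,hy)<\eta\epsilon_3$ you already get $d(y,y^*)\le\operatorname{diam}(Hy)<\eta\epsilon_3$ directly (no factor of $2$ is needed), though your looser $2\eta\epsilon_3$ bound of course also suffices.
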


We have let our $X_+$ take the role of $X_{\geq\epsilon, m}$ in \cite{Samet}, so we will always have to assume (without restriction) that $\epsilon_2(\cdot), \epsilon_3(\cdot) \leq \epsilon(n)/(2 M(n))$; recall that $X_+ \subseteq X_{\geq\epsilon, \eta}$ with $\epsilon = \epsilon(n)/(2M(n))$. Define
\[
\mu_{-1} := \min\left( \frac{\epsilon(n)}{64}, \nu \right),
\]
where $\nu$ is the minimal displacement of hyperbolic isometries of $\Gamma$. We then iteratively define $\mu_{-1} > \mu_0 > \ldots > \mu_n$ by
\[
\mu_{i+1} := \min \left( \frac{\epsilon_2(\mu_i)}{12}, \frac{\epsilon_3(\mu_i)}{24}, \frac{\mu_i}{12} \right),
\]
with $\epsilon_2(\cdot)$ and $\epsilon_3(\cdot)$ given by Lemma \ref{Samet4.6} and \ref{Samet4.7}. Furthermore, let
\begin{align*}
\mathcal{D}_0 &:= \text{ maximal } \mu_0\text{-discrete subset in } \overline{(M'_+)_{8\mu_0}} \cap \pi(S_0), \quad\text{and} \\
\mathcal{D}_i &:= \text{ maximal } \mu_i\text{-discrete subset in } \big( \overline{(M'_+)_{8\mu_i}} \cap \pi(S_i) \big) \setminus \bigcup_{j<i} (\pi(S_j))_{\mu_j}
\end{align*}
for $i > 0$. Without restriction, we can assume that $\mathcal{D}_0$ contains a maximal $\mu_0$-discrete subset of
\[
\partial (M'_+)_{8\mu_0} \cap \pi(S_0) \subseteq \overline{(M'_+)_{8\mu_0}} \cap \pi(S_0),
\]
and similarly $\mathcal{D}_i$ (for $i>0$) contains a maximal $\mu_i$-discrete subset of
\[
\big( \partial (M'_+)_{8\mu_i} \cap \pi(S_i) \big) \setminus \bigcup_{j<i} (\pi(S_j))_{\mu_j} \subseteq \big( \overline{(M'_+)_{8\mu_i}} \cap \pi(S_i) \big) \setminus \bigcup_{j<i} (\pi(S_j))_{\mu_j},
\]
because maximal $\mu_i$-discrete subsets in the sets on the left hand side can be extended to maximal $\mu_i$-discrete subsets in the sets on the right hand side. Denote by
\[
\mathcal{D} := \bigcup_{i=0}^n \mathcal{D}_i
\]
the set of all centers. In a first step, let
\[
\mathcal{B}' := \{ B_{3\mu_i}^M(x) : x \in \mathcal{D}_i \text{ for some } i \in \{ 0, \ldots, n \} \}
\]
be the set of all ordinary balls (in $M$) around the points of $\mathcal{D}$. The following lemma says that this already covers the shrunken thick part.

\begin{lem}\label{Lem:Ueberdeckung_ueberdeckt}
The sets of $\mathcal{B}'$ form a cover of $M'_+$, i.e.
\[
M'_+ \subseteq \bigcup_{B \in \mathcal{B}'} B.
\]
\end{lem}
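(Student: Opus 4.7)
The plan is to show that for any $x \in M'_+$ there exist an index $i \in \{0,\ldots,n\}$ and a center $x' \in \mathcal{D}_i$ with $d(x, x') < 3\mu_i$, by projecting $x$ to a suitable singular stratum and then invoking the maximality of the discrete set. The decisive choice is to take $i$ to be the smallest integer in $\{0,\ldots,n\}$ for which $d(x, \pi(S_i)) < 2\mu_i$. Such an $i$ exists because $X = \FIX(\{\id\}) \in \Sigma_n$, so $\pi(S_n) = M$ and the condition is trivially satisfied at $i = n$. I would then pick any $y \in \pi(S_i)$ with $d(x, y) < 2\mu_i$ (which exists by the definition of the infimum).

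To invoke the maximality of $\mathcal{D}_i$, I would need to check that $y$ lies in the set $A_i := \overline{(M'_+)_{8\mu_i}} \cap \pi(S_i) \setminus \bigcup_{j<i}(\pi(S_j))_{\mu_j}$ of which $\mathcal{D}_i$ is a maximal $\mu_i$-discrete subset. The memberships $y \in \pi(S_i)$ and $y \in \overline{(M'_+)_{8\mu_i}}$ are immediate from the construction and from $x \in M'_+$ with $d(x,y) < 2\mu_i \leq 8\mu_i$. The main obstacle -- really the only nontrivial point -- is to exclude $y$ from each of the enlarged lower strata $(\pi(S_j))_{\mu_j}$ for $j < i$. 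This is where the geometric decay $\mu_{k+1} \leq \mu_k/12$ built into the recursive definition of the radii becomes indispensable: if $d(y, \pi(S_j)) < \mu_j$ held for some $j < i$, the triangle inequality combined with $d(x,y) < 2\mu_i$ would yield $d(x, \pi(S_j)) < 2\mu_i + \mu_j$, whereas the minimality of $i$ forces $d(x, \pi(S_j)) \geq 2\mu_j$. Together these give $\mu_j < 2\mu_i$, contradicting $\mu_j \geq 12\mu_i$ (which follows from $\mu_{k+1} \leq \mu_k/12$ for any $j \leq i-1$). The choice of the constant $2$ in the definition of $i$ is made precisely so that this contradiction works; any constant strictly greater than $12/11$ and at most $2$ would do.

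Once $y \in A_i$ is established, maximality of $\mathcal{D}_i$ as a $\mu_i$-discrete subset of $A_i$ produces some $x' \in \mathcal{D}_i$ with $d(x', y) < \mu_i$, and a final application of the triangle inequality yields $d(x, x') < 2\mu_i + \mu_i = 3\mu_i$. Hence $x \in B^M_{3\mu_i}(x') \in \mathcal{B}'$, and since $x \in M'_+$ was arbitrary, the sets of $\mathcal{B}'$ cover $M'_+$.
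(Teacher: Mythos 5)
Your proof is correct and follows essentially the same route as the paper: choose $i$ minimal with $d(x,\pi(S_i))<2\mu_i$, pick a nearby $y\in\pi(S_i)$, rule out $y\in(\pi(S_j))_{\mu_j}$ for $j<i$ using the decay $\mu_j\geq 12\mu_i$ together with minimality of $i$, and then apply maximality of $\mathcal{D}_i$. The added remark on the admissible range of the constant $2$ is a fine sanity check but does not change the argument.
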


\begin{proof}
The proof contains ideas of \cite{Samet} Theorem 4.2 step 1). Let $x \in M'_+$ and choose $i \in \{0, \ldots, n \}$ minimal, such that $d(x, \pi(S_i)) < 2\mu_i$; since $S_n = X$ -- i.e. $\pi(S_n) = M$ --, this is always possible. Hence there is $y \in \pi(S_i)$ with $d(x,y) < 2\mu_i$.

We claim that $y \notin \bigcup_{j<i} (\pi(S_j))_{\mu_j}$. Assume the contrary, then there would be $j < i$ and $z \in \pi(S_j)$ with $d(y,z) < \mu_j$. As $\mu_i \leq \mu_j/12$ (because $j<i$), this leads to
\[
d(x,z) \leq d(x,y) + d(y,z) < 2\mu_i + \mu_j < 2\mu_j,
\]
i.e. $d(x, \pi(S_j)) < 2\mu_j$; since $j < i$, this contradicts the minimality of $i$. Thus $y \notin \bigcup_{j<i} (\pi(S_j))_{\mu_j}$.

Using $d(x,y) < 2\mu_i < 8\mu_i$, we also get $d(y, M'_+) \leq d(y, x) < 8\mu_i$, so $y \in \overline{(M'_+)_{8\mu_i}}$ and hence
\[
y \in \big( \overline{(M'_+)_{8\mu_i}} \cap \pi(S_i) \big) \setminus \bigcup_{j<i} (\pi(S_j))_{\mu_j}.
\]
By definition, $\mathcal{D}_i$ lies in that set as a maximal $\mu_i$-discrete subset, so there must be a $y' \in \mathcal{D}_i$ with $d(y, y') < \mu_i$. We conclude
\[
d(x,y') \leq d(x,y) + d(y,y') < 2\mu_i + \mu_i = 3\mu_i,
\]
i.e. $x \in B_{3\mu_i}^M(y') \in \mathcal{B}'$, which finishes the proof.
\end{proof}

To fill in possible gaps of the cover outside of $M_+$ (or using another interpretation: to make the cover stable\footnote{We say that a set is stable under the flow if the flow does not leave that set after entering it for the first time.} under the flow to $M_+$), we will have to stretch some of the balls of the cover; this simply means that we replace that ordinary ball by (the image of) a suitable stretched ball with that ball as its initial ball. Recall that in the present situation, $\pi(B_r^X(y)) = B_r^M(\pi(y))$ for all $y \in X$ and $r > 0$, so for the sake of simplicity, the image of a stretched ball in $X$ will also be called stretched ball (in $M$).

If $x \in \mathcal{D}_i \cap \partial (M'_+)_{8\mu_i}$, then (by choice of $M'_+$) $B := B_{3\mu_i}^M(x)$ is contained in the $\epsilon(n)$-thin part of $M$. We will replace such a ball by a corresponding stretched ball; at this point, the length of this stretching is of no further importance and will be specified later on in the proof of Lemma \ref{Lem:Laenge_Streckung} (see below). These stretchings will turn the set $\mathcal{B}'$ into our final set of covering sets $\mathcal{B}$. Observe that the set $\mathcal{D}_i \cap \partial (M'_+)_{8\mu_i}$ of initial centers of the stretched balls contains precisely the points of $\mathcal{D}_i$ with maximal distance to $M'_+$ (namely $8\mu_i$).

With this choice of stretchings, we have achieved a monotonicity of the radii of the (stretched) balls; in simple terms, it means that the initial centers of stretched balls with large radius are closer to the thin part than the (initial) centers of arbitrary balls of smaller radius (if these balls intersect):

\begin{lem}\label{Lem:Monotonie_der_Radien}
Let $y_i \in \widetilde{\mathcal{D}_i}$ be an initial center of a stretched ball $B_i \in \widetilde{\mathcal{B}}$ with initial ball $B_{3\mu_i}^X(y_i)$ and $B_j \in \widetilde{\mathcal{B}}$ be an arbitrary (i.e. ordinary or stretched) ball with (initial) center $B_{3\mu_j}^X(y_j)$ for $y_j \in \widetilde{\mathcal{D}_j}$. By $HS_i \ni y_i$ and $HS_j \ni y_j$ we denote the corresponding horospheres around $z$. If $B_i \cap B_j \neq \emptyset$ and $3\mu_i > 3\mu_j$ (so equivalently: $i<j$), then $HS_i$ is closer to $z$ than $HS_j$.
\end{lem}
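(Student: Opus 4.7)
The plan is to place $z$ at infinity in the upper half space model, so that horospheres around $z$ are the horizontal affine hyperplanes $\{t=\mathrm{const}\}$. The claim then reduces to the strict inequality $t_{y_i} > t_{y_j}$ for the $t$-coordinates of the two initial centers, where I write $y_i = (x_{y_i}, t_{y_i})$ and $y_j = (x_{y_j}, t_{y_j})$.

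First I pin down the geometry of $X'_+$ near $z$. Since locally $X_-$ is a horoball $\{t > t_0\}$, its $\epsilon(n)/32$-thickening is the horoball $\{t > T\}$ with $T := t_0 e^{-\epsilon(n)/32}$, so $X'_+ = \{t \leq T\}$ near the cusp. Because $B_i$ is a stretched ball, its initial ball $B^X_{3\mu_i}(y_i)$ lies in the $\epsilon(n)$-thin part at $z$, forcing $t_{y_i} > T$. The fact that $y_i$ is an initial center of a stretched ball means $\pi(y_i) \in \partial(M'_+)_{8\mu_i}$, and since $X'_+ = \pi^{-1}(M'_+)$ this translates to $d(y_i, X'_+) = 8\mu_i$. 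I will argue that the foot point of $y_i$ in $X'_+$ is the vertical projection $(x_{y_i}, T)$: this combines the convexity of horoballs (so that the nearest point on the half space $\{t \leq T\}$ is vertically below) with the precise invariance of the $\epsilon(n)/32$-neighborhood of the thin part at $z$ under the maximal parabolic subgroup $G'$, which rules out closer competing components of $\partial X'_+$. The vertical distance formula $d((x,t),(x,s)) = |\ln(t/s)|$ then gives $\ln(t_{y_i}/T) = 8\mu_i$, i.e., $t_{y_i} = T e^{8\mu_i}$.

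Next I bound $t_{y_j}$. From $\pi(y_j) \in \mathcal{D}_j \subseteq \overline{(M'_+)_{8\mu_j}}$ we have $d(y_j, X'_+) \leq 8\mu_j$. If $t_{y_j} \leq T$ then trivially $t_{y_j} \leq T < T e^{8\mu_i} = t_{y_i}$ and we are done. Otherwise $t_{y_j} > T$, and here the hypothesis $B_i \cap B_j \neq \emptyset$ in $X$ is essential: it forces $y_j$ close enough to $y_i$ that $y_j$ lies in the cusp region at $z$, so the same vertical-projection argument applies and yields $\ln(t_{y_j}/T) \leq 8\mu_j$, i.e. $t_{y_j} \leq T e^{8\mu_j}$. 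Since $i<j$ gives $\mu_j < \mu_i$, the conclusion $t_{y_j} \leq T e^{8\mu_j} < T e^{8\mu_i} = t_{y_i}$ follows, so $HS_i$ is indeed closer to $z$.

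The main technical step to handle carefully is the claim that for a point in the cusp at $z$, the nearest point of $X'_+$ is the downward vertical foot on the horosphere $\{t=T\}$. This rests on the convexity of horoballs in the upper half space model (which identifies the vertical geodesic as the correct direction toward the complementary half space) together with the precise invariance of the Margulis thin neighborhood at $z$ under $G'$ (which prevents other $\Gamma$-translates, or other cusps, from producing a closer component of $\partial X'_+$). Once this geometric point is settled, both the exact value $t_{y_i} = T e^{8\mu_i}$ and the bound $t_{y_j} \leq T e^{8\mu_j}$ are immediate from the logarithmic distance formula along vertical geodesics.
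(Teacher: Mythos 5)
Your argument hinges on the claim that $X_-$ is locally a horoball $\{t > t_0\}$ near $z$, so that $\partial X'_+$ near the cusp is a horosphere $\{t = T\}$ and the nearest-point projection of $y_i, y_j$ onto $X'_+$ is vertical. This is the gap: for hyperbolic orbifolds (and even torsion-free lattices) in dimension $n \geq 4$, the parabolic elements of $\Gamma_z$ need not be pure translations. The cusp group $\Gamma_z$ acts on $\R^{n-1}$ as a crystallographic (Bieberbach-type) group, and for $n \geq 4$ it may contain screw motions, which are parabolic but have a nontrivial rotational part. For such $\gamma$, the displacement $d_\gamma((x,t)) = 2\arsinh\bigl(\|Ax+b-x\|/(2t)\bigr)$ depends on $x$, so $\{d_\gamma < \epsilon_\gamma\}$ is not a horoball, and consequently $X_- = \bigcup_\gamma \{d_\gamma < \epsilon_\gamma\}$ and its $\epsilon(n)/32$-thickening are not bounded by a horosphere. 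Once that fails, the identity $t_{y_i} = Te^{8\mu_i}$ no longer holds, and the inequality $d(y_j,X'_+) \leq 8\mu_j$ no longer translates into $t_{y_j} \leq Te^{8\mu_j}$: the nearest point of $X'_+$ to $y_j$ may lie to the side rather than straight down, so the true $t$-coordinate of $y_j$ is not controlled. (Your argument is valid for $n = 2,3$, where parabolics are always translations, but the lemma must hold in all dimensions.)

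The paper's proof avoids any assumption on the shape of $\partial X'_+$. It first shows by the triangle inequality that the two (initial) balls $B_{3\mu_i}(y_i)$ and $B_{3\mu_j}(y_j)$ cannot meet, then considers the flow geodesic $c_x$ through a point $x \in B_i \cap B_j$, which must pass through both balls. Using only that $d(\cdot, X'_+)$ is $1$-Lipschitz, together with $d(y_i,X'_+) = 8\mu_i$ and $d(y_j,X'_+) \leq 8\mu_j$, one finds that $c_x$ cannot enter the larger ball after the smaller one (that would make the entry point of the larger ball simultaneously at distance $\geq 5\mu_i$ and $< \mu_i$ from $X'_+$), and in the remaining case $c_x$ must travel at least $4\mu_i$ between the balls; an explicit upper-half-space computation along $c_x$ then forces $t_{y_j} < t_{y_i}$. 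If you want to keep the flavor of your argument, the fix is to replace ``vertical projection onto a horosphere'' with ``flow distance along $c_x$'' and use the Lipschitz property of $d(\cdot, X'_+)$ in place of the exact formula $\ln(t/T)$.
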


\begin{proof}
We already see from the formulation of the statement that we will treat the situation in $X$, where $\widetilde{\mathcal{D}_i}$ and $\widetilde{\mathcal{B}}$ denote the lifts (to $X$) of the corresponding sets in $M$.

By the choice of centers, we have $d(y_i, X'_+) = 8\mu_i$; moreover, $d(y_j, X'_+) \leq 8\mu_j$. Let $x \in B_i \cap B_j$. If the (initial) balls $B_{3\mu_i}^X(y_i)$ and $B_{3\mu_j}^X(y_j)$ would intersect (in $x$, without restriction), then we would reach the contradiction
\[
8 \mu_i = d(y_i, X'_+) \leq d(y_i, x) + d(x, y_j) + d(y_j, X'_+) < 3\mu_i + 3\mu_j + 8\mu_j < 4\mu_i
\]
(note that $12\mu_j \leq \mu_i$, since $i<j$). Let $c_x$ be the flow geodesic of $x$, with parametrization $c_x(0) = x$ and $c_x(-\infty) = z$ (i.e. flowing to $X'_+$). We've already seen that the (initial) balls can not intersect, but $c_x$ has to intersect both (initial) balls. We will distinguish between the two possible cases:

In the first case, $c_x$ leaves the smaller (initial) ball $B_{3\mu_j}^X(y_j)$ before entering the larger initial ball $B_{3\mu_i}^X(y_i)$. Observe that all points of $B_{3\mu_j}^X(y_j)$ have distance $< 3\mu_j + 8\mu_j < 12\mu_j \leq \mu_i$ to $X'_+$, because $d(y_j, X'_+) \leq 8\mu_j$. As the distance to $X'_+$ decreases along the flow, this means that the entry point of $c_x$ in $\overline{B_{3\mu_i}^X(y_i)}$ also has distance $<\mu_i$ to $X'_+$. This is a contradicition, since $d(y_i, X'_+) = 8\mu_i$ already implies that all points $y \in \overline{B_{3\mu_i}^X(y_i)}$ satisfy $d(y, X'_+) \geq 8\mu_i - 3\mu_i = 5\mu_i$. Consequently, this case can not happen.

In the second case, $c_x$ leaves the larger initial ball $B_{3\mu_i}^X(y_i)$ before entering the smaller (initial) ball $B_{3\mu_j}^X(y_j)$. Using the above estimates for the distance to $X'_+$, we know that along $c_x$, we have to flow at least $5\mu_i - \mu_i = 4\mu_i$ after leaving $B_{3\mu_i}^X(y_i)$ before we enter $B_{3\mu_j}^X(y_j)$. If $x_i = c_x(t_i)$ denotes the exit point of $\overline{B_{3\mu_i}^X(y_i)}$ and $x_j = c_x(t_j)$ the entry point in $\overline{B_{3\mu_j}^X(y_j)}$, we see that $t_j \geq t_i + 4\mu_i$

We will now use the upper half space model (again with $z$ as point $\infty$), so horospheres around $z$ are (Euclidean) hyperplanes $\R^{n-1} \times \{t\} \subseteq \H^n$ and the flow geodesics are (Euclidean) lines going away from $\infty$ and perpendicular to $\R^{n-1} \times \{0\}$. So the exit point $x_i$ is on the lower half of the initial ball $B_{3\mu_i}^X(y_i)$ (seen as a Euclidean ball), i.e. it has a $t$-coordinate smaller than the $t$-coordinate of the Euclidean center of $B_{3\mu_i}^X(y_i)$. Assuming $y_i = (0,\ldots,0,1)$ (without restriction), by similar arguments as in the proof of Lemma \ref{Lem:Parabolisch_gestreckte_Kugeln_Abstand} and using the usual distance formulas we can deduce that the $t$-coordinate of the Euclidean center of $B_{3\mu_i}^X(y_i)$ is $\cosh(3\mu_i) > 1$. Similarly, the $t$-coordinate of a point that is reached after flowing $4\mu_i$ (in hyperbolic length) -- and starting from a $t$-coordinate of $\cosh(3\mu_i)$ -- can be computed to be $e^{-\mu_i} < 1$ (again using the usual distance formulas). By the above arguments, this value is an upper bound on the $t$-coordinate of any entry point of the ball $B_{3\mu_j}^X(y_j)$; so an arbitrary entry point of $c_x$ into $B_{3\mu_j}^X(y_j)$ has a $t$-coordinate $<1$. Since the $t$-coordinate of the center $y_j$ has an even smaller $t$-coordinate than the entry point, $y_j$ has a $t$-coordinate $<1$. Thus $y_j$ lies in a horosphere $\R^{n-1} \times \{t\}$ for some $t < 1$, whereas $y_i$ lies in the horosphere $\R^{n-1} \times \{1\}$.
\end{proof}

\subsubsection*{Stretching length}

Before proving the desired properties of the cover, we will further investigate the length of the stretched balls.

\begin{lem}\label{Lem:Laenge_Streckung}
The stretching length\footnote{As measured by the length of the (Euclidean) line segment in the construction of the stretched balls or -- equivalently -- by the (hyperbolic) distance between the initial center and the end center.} of the balls can be chosen in such a way that all the sets of $\mathcal{B}$ are contained in the (ordinary) thick part, i.e.
\[
\bigcup_{B \in \mathcal{B}} B \subseteq M_+.
\]
\end{lem}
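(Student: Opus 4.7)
The plan is to lift everything to the universal cover $X = \H^n$ and work in the upper half space model with the parabolic fixed point $z$ of the relevant cusp placed at $\infty$; then the flow geodesics are vertical lines and horospheres around $z$ are horizontal hyperplanes $\{t = \text{const}\}$. The key observation is that since by assumption every stretched ball $U$ lies in the $\epsilon(n)$-thin part with respect to $G' = \Gamma_z$, a component of the $\epsilon(n)$-thin part of $X$ that is precisely invariant under $\Gamma$, only parabolic elements of $G'$ (which have $\epsilon_\gamma = \epsilon(n)/2$) can contribute to thin behaviour inside $U$. Consequently, within this region both $\partial X_-$ and $\partial X'_+$ are essentially horospheres $\{t = t_+\}$ resp.\ $\{t = t'_+\}$ around $z$, with $t_+ = t'_+ \cdot e^{\epsilon(n)/32}$, since vertical geodesic segments realize the distance between parallel horospheres.

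Concretely, I would first locate the lifted initial center $y = (x_y, t_y)$: because $d(y, X'_+) = 8\mu_i$, one has $t_y = t'_+ \cdot e^{8\mu_i}$, and the definition $\mu_{-1} = \min(\epsilon(n)/64, \nu)$ together with $\mu_0 \leq \mu_{-1}/12$ yields $8\mu_i \leq \epsilon(n)/96 < \epsilon(n)/32$, so $y \in X_+$. The top of the hyperbolic initial ball $B_{3\mu_i}^X(y)$ sits at $t'_+ \cdot e^{11\mu_i}$, still below $t_+$, so the initial ball is already inside $X_+$. Writing $L$ for the hyperbolic distance between initial and end centres of $U$ and $\mu_{\text{end}} \leq 3\mu_i$ for the hyperbolic end radius, the top of $U$ sits at $t'_+ \cdot e^{8\mu_i + L + \mu_{\text{end}}}$. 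Requiring this to stay strictly below $t_+$ yields the condition $L \leq \epsilon(n)/32 - 11\mu_i$, a bound of size at least $13\epsilon(n)/768 > 0$; this defines a valid range of stretching lengths uniformly in $i$, and any such choice makes $U \subseteq X_+$ and hence $\pi(U) \subseteq M_+$. Applying this to every stretched ball yields $\bigcup_{B \in \mathcal{B}} B \subseteq M_+$.

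The main obstacle I anticipate is making the identification of $\partial X_-$ (and thus of $\partial X'_+$) with a genuine horosphere around $z$ precise: in reality $\partial X_-$ is assembled out of the horospherical boundaries of $\{d_g < \epsilon(n)/2\}$ for the various parabolic $g \in G'$, and its shape is only piecewise horospherical. One has to combine convexity of these sublevel sets (via the distance formulas in the preliminaries) with the $\Gamma$-precise invariance of the $G'$-thin component to justify that the vertical, one-parameter computation in $t$ captures the true hyperbolic distances to $X'_+$ and $X_+$. A secondary issue is horizontal escape: the hyperbolic horizontal width of $U$ is bounded by its initial radius $3\mu_i$, which is well below the uniform separation of distinct components of $X_-$ guaranteed by the thick-thin decomposition (Theorem~\ref{Satz:Dick_duenn_Zerlegung_Orbifaltigkeit}), so $U$ stays within the single $G'$-component and the analysis above applies without interference from other cusps.
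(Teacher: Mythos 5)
You correctly flagged the dangerous step yourself, but it is not a minor polishing task to be done afterwards --- it is the load-bearing part of the argument, and it fails in general. The claim that $\partial X_-$ and $\partial X'_+$ are (even ``essentially'') horospheres around $z$ at vertical separation $\epsilon(n)/32$ is what turns $d(y, X'_+) = 8\mu_i$ into $t_y = t'_+ e^{8\mu_i}$ and produces the bound $L \leq \epsilon(n)/32 - 11\mu_i$. But the cusp component of $X_-$ is $\bigcup \{d_\gamma < \epsilon(n)/2\}$ over the parabolic $\gamma \in G' = \Gamma_z$. For a pure parabolic translation $\gamma$ the set $\{d_\gamma < \epsilon(n)/2\}$ is indeed a horoball; for a screw parabolic (translation composed with a nontrivial rotation, which occurs for $n\geq 3$) the displacement $d_\gamma((x,t)) = 2\arsinh(\|(A-I)x+b\|/2t)$ depends on the horizontal coordinate $x$, and the sublevel set is not a horoball. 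Convexity of the individual sublevel sets and precise invariance of the thin component do not force the boundary of the union to be horospherical, so neither of your proposed remedies closes the gap. A concrete symptom: the paper's Lemma \ref{Lem:Laenge_Streckung_absolut} bounds the length of the flow geodesic segment between $\partial X'_+$ and $\partial X_+$ by a quantity of order $\ln\!\left(\sinh(\epsilon(n)/2)/\sinh(\epsilon(n)/4)\right)$, which for small $\epsilon(n)$ is of size roughly $\ln 2$, \emph{not} $\epsilon(n)/32$; under your horospherical picture that flow length would be exactly $\epsilon(n)/32$. So the two pictures are incompatible, and the longer flow length is what actually has to be accommodated.

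The paper's proof never invokes any shape information about $\partial X_+$ or coordinates. It defines the end center of a stretched ball with initial center $y$ as the point $c_y(t)$ on the flow geodesic satisfying $c_y(t) \in M_+$ and $d(c_y(t), \partial M_+) = \mu_{-1}/2$, a condition phrased purely in terms of the metric. Since the end radius is at most the initial radius $3\mu_i \leq \mu_{-1}/4$, every point of the end ball lies at distance $\geq \mu_{-1}/2 - \mu_{-1}/4 = \mu_{-1}/4$ from $\partial M_+$; because the intermediate centers along $c_y$ are no closer to $\partial M_+$ and the intermediate radii are no larger, the whole stretched ball stays in $M_+$. This formulation also guarantees, without any extra work, that the stretched ball reaches all the way down to distance $\mu_{-1}/2$ from $\partial M_+$ and hence crosses $\partial M''_+$; your cap $L \leq \epsilon(n)/32 - 11\mu_i$ would be far too short for that when $\partial X_+$ is not horospherical, and the stability of the cover under the flow (Lemma \ref{Lem:Ueberdeckung_ist_stabil}) and the commutative diagram in Lemma \ref{Lem:Kommutatives_Diagramm_M_+_N_+} would then break down. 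In short: you need to prescribe the end center by a \emph{distance-to-$\partial M_+$} condition rather than by a coordinate formula, so that the argument is agnostic about the precise shape of $\partial X_+$.
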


\begin{proof}
The general situation and strategy is summarized in Figure \ref{Bild_Streckungslaenge}.

{
\begin{figure}
\centering

\begin{tikzpicture}[scale=0.75]





\fill[color=gray, opacity=0.05] (0,0) .. controls (1,0.5) and (4,0.5) .. (5,0) -- (5,-0.5) -- (0,-0.5) -- cycle;
\draw[very thin] (0,0) .. controls (1,0.5) and (4,0.5) .. (5,0) node[right] {\footnotesize$\partial M_+$};
\node at (2.5,-0.15) {\footnotesize$M_-$};


\draw[very thin, dashed] (0,2) .. controls (1,2.5) and (4,2.5) .. (5,2);


\fill[color=gray, opacity=0.05] (0,4) .. controls (1,4.5) and (4,4.5) .. (5,4) -- (5,5) -- (0,5) -- cycle;
\draw[very thin] (0,4) .. controls (1,4.5) and (4,4.5) .. (5,4) node[right] {\footnotesize$\partial M'_+$};
\node at (2.5,4.65) {\footnotesize$M'_+$};


\draw [decorate,decoration={brace,amplitude=5pt},xshift=0pt,yshift=0pt]
(0,0) -- (0,2) node [black,midway,xshift=-1cm] 
{\scriptsize $\mu_{-1} = \frac{\epsilon(n)}{64}$};

\draw [decorate,decoration={brace,amplitude=5pt},xshift=0pt,yshift=0pt]
(0,2) -- (0,4) node [black,midway,xshift=-1cm] 
{\scriptsize $\mu_{-1} = \frac{\epsilon(n)}{64}$};


\filldraw (2.5,1.375) circle (0.25pt);												
\fill[color=gray, opacity=0.1] (2.5,1.375) circle (0.5cm);		
\draw[ultra thin] (2.5,1.375) circle(0.5cm);									

\draw[decorate,decoration={brace,amplitude=3pt},xshift=0pt,yshift=0pt]
(2.5,0.375) -- (2.5,0.875) node [black,midway,xshift=-0.4cm] 
{\scriptsize $\frac{\mu_{-1}}{4}$};

\draw[decorate,decoration={brace,amplitude=3pt},xshift=0pt,yshift=0pt]
(2.5,0.875) -- (2.5,1.875) node [black,midway,xshift=-0.6cm] 
{\scriptsize $2\!\cdot\!\frac{\mu_{-1}}{4}$};

\draw[decorate,decoration={brace,amplitude=3pt},xshift=0pt,yshift=0pt]
(2.5,1.875) -- (2.5,2.375) node [black,midway,xshift=-0.4cm] 
{\scriptsize $\frac{\mu_{-1}}{4}$};


\end{tikzpicture}

\caption{Choosing a suitable stretching length in the proof of Lemma \ref{Lem:Laenge_Streckung}. It is enough to specify the position of the end center (here: the center of the gray ball). Ordinary balls of $\mathcal{B}'$ (equivalently: their preimages in $X$) can not exceed the dashed line.}
\label{Bild_Streckungslaenge}

\end{figure}
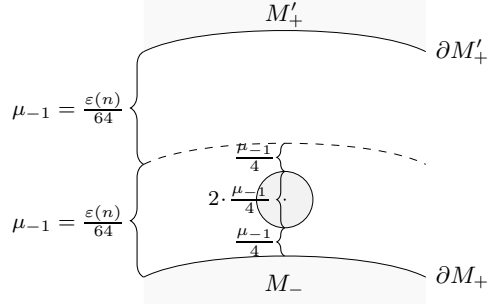
}

We first remark that by the monotonicity $\mu_i > \mu_{i+1}$, the balls of $\mathcal{B}'$ with centers $x \in \mathcal{D}_0$ are those which can lie the furthest away from $M'_+$; for these $x$ we have $d(x, M'_+) \leq 8\mu_0$, so for arbitrary $y \in B_{3\mu_0}^M(x)$ we get
\[
d(y, M'_+) < 8\mu_0 + 3\mu_0 = 11\mu_0 < 12\mu_0 \leq \mu_{-1},
\]
where $\mu_{-1} = \epsilon(n)/64$ is half the size $\epsilon(n)/32$ of the shrinking of $M_+$ onto $M'_+$ (so every point of $\bigcup_{B \in \mathcal{B}'} B$ has a distance $>\mu_{-1}$ to the boundary $\partial M_+$). This yields $\bigcup_{B \in \mathcal{B}'} B \subseteq M_+$, so it remains to show that this property is preserved when stretching the balls, i.e. when going from $\mathcal{B}'$ to $\mathcal{B}$.

Let $y$ be the center of a ball $B_{3\mu_i}^M(y) \in \mathcal{B}'$ which should be stretched; to specify the stretching length, it is enough to define the position of the end center\footnote{We stated the construction of stretched balls only in $X$, but the construction in $M$ can be translated to $X$ by lifting the end center.}. Let $c_y$ be the flow geodesic of $y$; we will flow along $c_y$ in direction of $\partial M_+$ until $c_y(t)$ is precisely $\mu_{-1}/2$ away from $\partial M_+$, but still lies in $M_+$, i.e. for this $t$ we have $c_y(t) \in M_+$ and $d(c_y(t), \partial M_+) = \mu_{-1}/2$. We claim that this $c_y(t)$ satisfies the conditions needed to be an end center. Observe that the hyperbolic radius at the end center is smaller than the hyperbolic radius at the initial center, and since
\[
3\mu_i \leq 3\mu_0 \leq \frac{3\mu_{-1}}{12} = \frac{\mu_{-1}}{4},
\]
the latter is bounded by $\mu_{-1}/4$. Hence the hyperbolic end radius is also smaller than $\mu_{-1}/4$. Thus by choice of $c_y(t)$, every point of the end ball has distance $\geq \mu_{-1}/2 - \mu_{-1}/4 = \mu_{-1}/4$ from $\partial M_+$, so the end ball itself is completely contained in $M_+$. Consequently, the entire stretched ball lies inside $M_+$. Also note that the end ball has distance $\geq \mu_{-1}/4$ to any ordinary ball of $\mathcal{B}$, because the distance of points of the ordinary balls to $\partial M_+$ is always $\geq \mu_{-1}$.
\end{proof}

The stretching length can be taken from the proof of the above Lemma \ref{Lem:Laenge_Streckung}, but there it is defined via the distance to $\partial M_+$; so in order to get a value for the stretching length in absolute terms, we have to check how long we have to flow to realize said distance.

\begin{lem}\label{Lem:Laenge_Streckung_absolut}
Every stretched ball $B \in \mathcal{B}$ with initial ball $B_{3\mu_i}^M(y)$ is contained in a ball $B_{R(n,\nu)}^M(y)$, where $R(n,\nu) > 0$ is a constant only depending on the dimension $n$ and the constant $\nu$.
\end{lem}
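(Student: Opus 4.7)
The plan is a direct computation in horocyclic coordinates around the cusp containing $B$.

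First, I would lift to the universal cover and work in the upper half-space model of $X=\H^n$, placing the parabolic fixed point $z$ corresponding to that cusp at $\infty$. Let $U$ be a lift of $B$ with initial center a lift $\widetilde{y}$ of $y$. In these coordinates, horospheres around $z$ are the horizontal planes $\{t=\text{const}\}$, flow geodesics are vertical lines, and the portions of $\partial M_+$ and $\partial M'_+$ near the cusp lift to horospheres (since $\partial M_+$ arises as a level set of the displacement function of a parabolic in the maximal parabolic subgroup $G'$ fixing $z$, and $\partial M'_+$ is its metric parallel at distance $\epsilon(n)/32$). Moreover, by construction $U$ is a union of hyperbolic balls along a vertical geodesic section, with hyperbolic radii monotonically decreasing from the initial radius $3\mu_i$.

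Next, I would bound the hyperbolic length $L$ of this geodesic section, i.e.\ the distance from $\widetilde{y}$ to the end center. Using the orthogonality of flow geodesics and horospheres (a feature of constant curvature), the hyperbolic distance along a flow line between $\partial M_+$ and $\partial M'_+$ equals $\epsilon(n)/32$. By definition of $\mathcal{D}_i$, $\widetilde{y}$ lies on the cusp-side of $\partial M'_+$ at flow-line distance $8\mu_i$ from $M'_+$, hence at flow-line distance $\epsilon(n)/32-8\mu_i$ from $\partial M_+$ on the $M_+$-side. By Lemma \ref{Lem:Laenge_Streckung} the end center lies at distance $\mu_{-1}/2$ from $\partial M_+$ on the $M_+$-side. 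Since $8\mu_i\le 8\mu_0\le 8\mu_{-1}/12\le \epsilon(n)/96$, the initial center is further from $\partial M_+$ than the end center, and
\[
L = \bigl(\epsilon(n)/32 - 8\mu_i\bigr) - \mu_{-1}/2 \;\leq\; \epsilon(n)/32.
\]

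Finally, I would bound the diameter of $U$ relative to $\widetilde{y}$. Any point of $U$ lies in one of the constituent hyperbolic balls, whose center sits on the geodesic section (so at distance at most $L$ from $\widetilde{y}$) and whose hyperbolic radius is at most $3\mu_i\le\mu_{-1}/4$. So every point of $U$ has hyperbolic distance at most $L+3\mu_i\le \epsilon(n)/32+\mu_{-1}/4$ from $\widetilde{y}$, giving $U\subseteq B_{R(n,\nu)}^{X}(\widetilde{y})$ with $R(n,\nu):=\epsilon(n)/32+\mu_{-1}/4$, a constant depending only on $n$ and $\nu$ through $\mu_{-1}=\min(\epsilon(n)/64,\nu)$. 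Projecting to $M$, where $\pi$ is distance non-increasing, yields the desired inclusion $B\subseteq B_{R(n,\nu)}^{M}(y)$. There is no serious obstacle; the only place constant curvature is used is the orthogonality of flow geodesics and horospheres, which is what lets us identify $d(\cdot,\partial M_+)$ with arc length along the vertical direction and reduces the whole estimate to a one-dimensional computation.
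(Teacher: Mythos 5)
Your outline follows the paper's strategy (bound the flow-line length $L$ of the stretching, then add the initial radius), but there is a genuine gap in the central step where you bound $L$.

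You assert that near a cusp, $\partial M_+$ and $\partial M'_+$ lift to horospheres, and use this orthogonality to conclude that the flow-line distance between them is exactly $\epsilon(n)/32$ (and similarly to read off that the initial center sits at flow-line distance $8\mu_i$ from $M'_+$). This is false in general. The thin part near $z$ is the union $\bigcup_{\gamma}\{d_\gamma<\epsilon(n)/2\}$ over parabolic $\gamma\in\Gamma_z$, and a sublevel set $\{d_\gamma<\epsilon\}$ is a horoball only when $\gamma$ acts on $\R^{n-1}$ as a pure translation, so that $\|\gamma(x)-x\|$ is independent of $x$. For a parabolic acting as a screw motion (dimension $\geq 4$) or a glide-type motion (already in dimension $3$ with orientation-reversing elements), $\|\gamma(x)-x\|$ varies with $x$, so $\{d_\gamma<\epsilon(n)/2\}$ is not a horoball, its boundary is not orthogonal to the vertical flow geodesics, and the nearest point of $X_-$ to a point of $\partial X'_+$ need not lie on that point's flow line. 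Consequently, the flow-line distance between $\partial X'_+$ and $\partial X_+$ can exceed $\epsilon(n)/32$, and the identity $L=(\epsilon(n)/32-8\mu_i)-\mu_{-1}/2$ does not follow.

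The paper avoids this by never claiming the boundaries are horospheres. Instead, for $y_1\in\partial X'_+$ one picks a parabolic $\gamma$ with $y_1\in\overline{(\{d_\gamma<\epsilon(n)/2\})_{\epsilon(n)/32}}$ and uses the triangle inequality to get $d_\gamma(y_1)\leq \epsilon(n)/2+2\cdot\epsilon(n)/32<\epsilon(n)$. Parametrizing the vertical flow geodesic through $y_1=(0,1)$ as $c_y(t)=(0,e^t)$, one computes $d_\gamma(c_y(t))=2\arsinh\bigl(\sinh(d_\gamma(y_1)/2)/e^t\bigr)$ and solves for the $t$ making this drop to $\epsilon(n)/2$, getting the explicit bound $t\leq\ln\bigl(\sinh(\epsilon(n)/2)/\sinh(\epsilon(n)/4)\bigr)$, a constant depending only on $n$. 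This is the piece of the argument you need to replace your horosphere shortcut with; once $L$ is bounded in this way, your final step (every point of $U$ lies within $3\mu_i\leq\mu_{-1}/4$ of the geodesic section, hence within $L+\mu_{-1}$ of $y$, and $\pi$ is $1$-Lipschitz) is fine.
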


\begin{proof}
We will omit any further mentioning of $\nu$ in the proof, as the dependency of $R$ on $\nu$ only comes from the definition of $\mu_{-1}$ as the minimum of $\epsilon(n)/64$ and $\nu$; here, the dependency on $\epsilon(n)/64$ contains the interesting information on the correct size of $R$, whereas $\nu$ is only needed to account for the fact that we defined our thick part using $\nu$ in order to have no tubes in the thin part.

Let $c_y$ be the flow geodesic of $y = c_y(0)$. In the proof of Lemma \ref{Lem:Laenge_Streckung} we have seen that the end center $c_y(t)$ of $B$ is characterized by $c_y(t) \in M_+$ and $d(c_y(t), \partial M_+) = \mu_{-1}/2$, for suitable $t>0$. By the construction of the stretched balls, every point $y' \in B$ is contained in a ball around some $c_y(t')$, where $0 \leq t' \leq t$; the radius of that ball is at most $3\mu_i$ (which is the initial radius). The latter value is bounded by $\mu_{-1}$, so using $d(y, c_y(t')) = t'$ we get
\[
B \subseteq B_{t + \mu_{-1}}^M(y).
\]
As $\mu_{-1}$ only depends on $n$, it remains to show that $t$ can also be controlled by only $n$. Recall that $y$ is not contained in $M'_+$ (because it is the initial center of a stretched ball), but in the piece between $\partial M'_+$ and $\partial M_+$, and similarly for the end center. If we could bound the length $L$ of $c_y$ between $\partial M'_+$ and $\partial M_+$, we would thus get the desired bound on the above $t$. Note that equivalently, we can try to control the length of the lift of $c_y$ (which we also denote by $c_y$) in $X$ between $\partial X'_+$ and $\partial X_+$.

Let $y_1 \in \partial X'_+$ and $y_2 \in \partial X_+$ be the intersections of $c_y$ with $\partial X'_+$ and $\partial X_+$. We reparametrize $c_y$ such that $c_y(0) = y_1$, so we only have to find an upper bound for $t_2 > 0$, where $c_y(t_2) = y_2$. By definition of $X'_+$, we have that $y_1 \in \overline{ ( \{ d_{\gamma} < \epsilon(n)/2 \} )_{\epsilon(n)/32} }$ for some parabolic isometry $\gamma \in \Gamma$ with fixed point $z = c_y(\infty)$. Let $y_3 = c_y(t_3)$ be the intersection of $c_y$ with $\partial \{ d_{\gamma} < \epsilon(n)/2 \}$, so $t_3 \geq t_2$ holds\footnote{Either $y_3$ is the intersection of $c_y$ with $\partial X_+$ (i.e. $t_3 = t_2$), or $c_y$ entered $\partial X_+$ earlier (at the boundary of another sublevel set, i.e. $t_2 \leq t_3$).}.

Thus it remains to show: if $y_1 = c_y(0)$ is the entry point of $c_y$ in $\overline{ ( \{ d_{\gamma} < \epsilon(n)/2 \} )_{\epsilon(n)/32} }$ and $y_3 = c_y(t_3)$ is the entry point of $c_y$ in $\overline{ \{ d_{\gamma} < \epsilon(n)/2 \} }$, then $t_3 \geq 0$ is bounded from above by a constant only depending on $n$.

Again, we choose the upper half space model with $z = c_y(\infty)$ as the point $\infty$, and $y_1 = (0,1) \in \R^{n-1} \times \R_{>0}$ without restriction. Note that $\gamma$ acts on $\R^{n-1} \times \R_{>0} = \H^n$ as a Euclidean motion and on $\R_{>0}$ as the identity; we will denote the restricted action of $\gamma$ on $\R^{n-1}$ also by $\gamma$. By the triangle inequality we already have
\[
\epsilon' := d_{\gamma}((0,1)) \leq \frac{\epsilon(n)}{2} + 2 \cdot \frac{\epsilon(n)}{32} < \epsilon(n).
\]
With the usual distance formulas we deduce
\[
\epsilon' = d\big( (0,1), (\gamma(0),1) \big) = 2 \cdot \arsinh\left( \frac{\| 0 - \gamma(0) \|}{2 \cdot 1} \right) = 2 \cdot \arsinh\left( \frac{\| \gamma(0) \|}{2} \right),
\]
hence $\| \gamma(0) \| = 2 \sinh ( \epsilon'/2 )$. Since $c_y$ is parametrized by arc length, we get $c_y(t) = (0, e^t)$; so similar to the above, the displacement at time $t$ is $d_{\gamma}(c_y(t)) = d_{\gamma}((0,e^t)) = 2 \arsinh ( \| \gamma(0) \|/(2 e^t) )$. Using the above value for $\| \gamma(0) \|$ and the bound on $\epsilon'$, we obtain
\[
d_{\gamma}(c_y(t)) = 2 \cdot \arsinh\left( \frac{\sinh \left( \frac{\epsilon'}{2} \right)}{e^t} \right) < 2 \cdot \arsinh\left( \frac{\sinh \left( \frac{\epsilon(n)}{2} \right)}{e^t} \right).
\]
Hence if $t$ is so large that the term on the right hand side is at most $\epsilon(n)/2$, then $c_y(t)$ already lies in $\{ d_{\gamma} < \epsilon(n)/2 \}$; thus we have to solve
\[
2 \cdot \arsinh\left( \frac{\sinh \left( \frac{\epsilon(n)}{2} \right)}{e^t} \right) \stackrel{!}{\leq} \frac{\epsilon(n)}{2}
\]
for $t$, i.e.
\[
\ln \left( \frac{\sinh \left( \frac{\epsilon(n)}{2} \right)}{\sinh \left( \frac{\epsilon(n)}{4} \right)} \right) \leq t.
\]
So the left hand term tells us the maximal amount of time needed for $c_y$ to enter $\{ d_{\gamma} < \epsilon(n)/2 \}$; as it only depends on $n$, this finishes the proof.
\end{proof}

\subsubsection*{Properties of the covering}

We will now prove the desired properties of the covering.

\begin{lem}\label{Lem:Ueberdeckung_ist_stabil}
The sets of $\mathcal{B}$ form a covering of $M'_+$, i.e.
\[
M'_+ \subseteq \bigcup_{B \in \mathcal{B}} B,
\]
which is stable under the flow $f$ in the sense that if a flow geodesic enters $\bigcup_{B \in \mathcal{B}} B$, then it remains inside $\bigcup_{B \in \mathcal{B}} B$ until it meets $\partial M'_+$.
\end{lem}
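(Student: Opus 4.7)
The covering property is immediate: by construction each stretched ball $B \in \mathcal{B}$ contains, as its initial ball, the ordinary ball of $\mathcal{B}'$ that it replaces, while the non-stretched balls of $\mathcal{B}'$ remain unchanged in $\mathcal{B}$. Hence $\bigcup_{B \in \mathcal{B}} B \supseteq \bigcup_{B' \in \mathcal{B}'} B'$, and by Lemma \ref{Lem:Ueberdeckung_ueberdeckt} this already contains $M'_+$.

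For the stability, I would lift everything to $X$. Since the (lifted) cusp regions around different parabolic fixed points are pairwise separated in $X$, it suffices to consider a single flow geodesic $c$ heading toward one fixed parabolic fixed point $z \in X(\infty)$. Passing to the upper half-space model with $z = \infty$, the geodesic $c$ is a vertical line along which the flow strictly decreases the $t$-coordinate, and each stretched ball in $\widetilde{\mathcal{B}}$ with end at this cusp is a vertical Euclidean capsule extending upward (i.e.\ toward $\partial X_+$) from its initial hyperbolic ball.

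Suppose, for contradiction, that $c$ enters $\bigcup_{B \in \widetilde{\mathcal{B}}} B$ and then exits at some time $s_1$ before reaching $\partial X'_+$. Let $B$ be the ball containing $c$ just before $s_1$. Since the flow moves downward in $t$, if $B$ is a stretched ball then $c(s_1)$ must lie on the lower hemisphere of the corresponding initial hyperbolic ball $B_{3\mu_i}^X(y)$ (an upward exit would be an entry under the flow direction, not an exit); if $B$ is an ordinary ball then $c(s_1) \in \partial B$. In either case $c(s_1)$ is at hyperbolic distance at most $3\mu_i$ from the (initial) center. I would then show that $c(s_1)$ must lie in another ball of $\widetilde{\mathcal{B}}$. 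If $c(s_1) \in X'_+$, this is immediate from the covering by $\mathcal{B}'$ via Lemma \ref{Lem:Ueberdeckung_ueberdeckt}, noting that if that covering ball was stretched in forming $\mathcal{B}$ it still contains the old initial ball and hence $c(s_1)$. If $c(s_1)$ is still in the shell $X_+ \setminus X'_+$, I would use the monotonicity of radii from Lemma \ref{Lem:Monotonie_der_Radien} together with the maximal $\mu_j$-discrete nature of $\widetilde{\mathcal{D}_j}$ to locate a center $x' \in \widetilde{\mathcal{D}_j}$ whose (possibly stretched) ball contains $c(s_1)$, contradicting the choice of $s_1$.

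The main technical obstacle is the second subcase: ensuring that at every depth in the shell region up to the distance $\mu_{-1}/2$ from $\partial M_+$ reached by the stretched balls, the discrete structure of $\widetilde{\mathcal{D}}$ always furnishes a covering ball that actually overlaps the exit point. This relies on the carefully chosen hierarchy $\mu_{i+1} \leq \min(\epsilon_2(\mu_i)/12, \epsilon_3(\mu_i)/24, \mu_i/12)$ and the interplay of Lemmas \ref{Samet4.6} and \ref{Samet4.7}, which together ensure that balls at consecutive scales (ordinary or stretched) overlap appropriately along flow lines in the shell, so that no "gap" can occur between leaving one covering ball and entering the next.
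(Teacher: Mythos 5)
Your covering argument is correct and matches the paper's.

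Your stability argument, however, is only a sketch, and the part you label ``the main technical obstacle'' is precisely the content of the paper's proof; you do not resolve it, and the tools you name do not resolve it either. The core difficulty is that the maximal $\mu_j$-discrete set $\mathcal{D}_j$ lives inside
\[
\big( \overline{(M'_+)_{8\mu_j}} \cap \pi(S_j) \big) \setminus \bigcup_{j'<j} (\pi(S_{j'}))_{\mu_{j'}},
\]
so to invoke discreteness at a point one must first locate that point near $\pi(S_j)$ and inside the correct distance band from $M'_+$. Your flow geodesic $c$ need not lie in (or near) any singular stratum $\pi(S_j)$ with $j<n$, so simply citing discreteness at the exit point $c(s_1)$ does not produce a nearby center. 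The paper resolves this by introducing \emph{auxiliary flow geodesics}: starting from a point $x'\in\pi(S_i)$ within $2\mu_i$ of $x\in\partial M'_+$, the companion geodesic $c_{x'}$ lies entirely in $\pi(S_i)$, the distance $d(c_x(t),c_{x'}(t))$ only decreases along the flow toward the thin part, and discreteness is then applied to $c_{x'}(t)\in\pi(S_i)$, not to $c_x(t)$. Lemma~\ref{Lem:Monotonie_der_Radien}, which you cite, compares radii of \emph{intersecting stretched balls}; it plays no role in locating a covering ball along the flow and cannot substitute for this companion-geodesic device.

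Two further pieces are missing. First, the index $i$ is not fixed: as $c_x$ flows outward it can approach lower-dimensional strata, and the paper handles this with the ``event~II'' bookkeeping — switching from index $i$ to a smaller index $j$ and from $c_{x'}$ to a new companion $c_{x''}\subseteq\pi(S_j)$. Your sketch has no analogue of this induction, yet without it the discreteness argument breaks down at the transition between scales. Second, the argument must terminate by \emph{entering a stretched ball} and staying inside it all the way out to $\partial M_+$: the paper's ``event~I'' shows that once $c_{x'}$ reaches $\partial(M'_+)_{8\mu_i}$ the nearby $\mathcal{D}_i$-point furnished by discreteness is, by the construction of $\mathcal{B}$, the initial center of a stretched ball, and then the stretched-ball geometry (Lemma~\ref{Lem:Laenge_Streckung}) takes over. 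Your proposal gestures at stretched balls only in the covering step, not at this crucial hand-off. As written, the proposal correctly sets up the statement to be proved by contradiction, but the substance of the stability proof — companion geodesics in singular strata, the two-event induction over indices, and the final capture by a stretched ball — is absent.
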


\begin{proof}
Using Lemma \ref{Lem:Ueberdeckung_ueberdeckt}, $\bigcup_{B \in \mathcal{B}'} B \subseteq \bigcup_{B \in \mathcal{B}} B$ yields the first statement.

As every flow geodesic meets $\partial M'_+$ at some point, the stability is equivalent to the following property: a flow geodesic of a point $x \in \partial M'_+$ -- flowing to the thin part, i.e. in opposite direction -- is contained in $\bigcup_{B \in \mathcal{B}} B$ until it leaves this set at some point and never enters it again.

Let $x \in \partial M'_+$ and $i \in \{ 0, \ldots, n \}$ be minimal such that
\[
x \in \big( \partial M'_+ \cap (\pi(S_i))_{2\mu_i} \big) \setminus \bigcup_{j<i} (\pi(S_j))_{3\mu_j/2}.
\]
This is always possible: if $i=n$, then by $S_n = M$ we get $\partial M'_+ \cap (\pi(S_n))_{2\mu_n} = \partial M'_+$, i.e. $x$ lies in $\partial M'_+ \setminus \bigcup_{j<n} (\pi(S_j))_{3\mu_j/2}$ -- proving this statement --, or $x \in \partial M'_+ \cap \bigcup_{j<n} (\pi(S_j))_{3\mu_j/2}$. In the latter case, we can chose $j<n$ minimally such that $x \in (\pi(S_j))_{3\mu_j/2}$. Since $(\pi(S_j))_{2\mu_j} \supseteq (\pi(S_j))_{3\mu_j/2}$, this means $x \in \partial M'_+ \cap (\pi(S_j))_{2\mu_j}$, hence $x \notin \bigcup_{j'<j} (\pi(S_{j'}))_{3\mu_{j'}/2}$ by minimality of $j$. So
\[
x \in \big( \partial M'_+ \cap (\pi(S_j))_{2\mu_j} \big) \setminus \bigcup_{j'<j} (\pi(S_{j'}))_{3\mu_{j'}/2}.
\]
Thus we can always find a minimal $i$ as described above.

Let $c_x$ be the flow geodesic of $x$ in direction of $M_-$, where $c_x(0) = x$. By the above arguments, there is $x' \in \pi(S_i)$ with $d(x,x') < 2\mu_i$. Denote the corresponding flow geodesic (again in direction of $M_-$) of $x'$ by $c_{x'}$, where $c_{x'}(0) = x'$. Then $d(c_x(t), c_{x'}(t)) < 2\mu_i$ for $t>0$, because the distance between the geodesics decreases when flowing towards $M_-$. Note that $c_{x'}$ is entirely contained in $\pi(S_i)$, so $c_x(t) \in (\pi(S_i))_{2\mu_i}$ for $t>0$. As $d(c_{x'}(0), M'_+) = d(x', M'_+) \leq d(x',x) < 2\mu_i$, we see that $c_{x'}(t)$ has not entered $\partial \overline{(M'_+)_{8\mu_i}}$ for sufficiently small $t>0$; moreover, $c_x(t)$ does not lie in $(\pi(S_j))_{3\mu_j/2}$ for sufficiently small $t>0$, as (similar to the above arguments) this would contradict the minimality of $i$. With increasing $t$ we will now flow towards $M_-$ (equivalently: towards $\partial M_+$). If $c_{x'}(t)$ meets the boundary $\partial (M'_+)_{8\mu_i}$, we say that event I happened; if on the other hand $c_x(t)$ enters some $(\pi(S_j))_{3\mu_j/2}$ for $j<i$, we say that event II happened.

As $\partial M'_+ \subseteq M'_+$, $x$ is already contained in some $B \in \mathcal{B}$. Thus for sufficiently small $t>0$, we know that $c_x(t)$ lies inside $\bigcup_{B \in \mathcal{B}} B$ and neither event I nor II has happened.

We will now show that for increasing $t$, $c_x(t)$ will remain inside $\bigcup_{B \in \mathcal{B}} B$ if neither of these events happen. To this end note that $c_{x'}(t) \notin \bigcup_{j<i} (\pi(S_j))_{\mu_j}$ has to hold, because otherwise -- for some $j<i$ -- we would find $x'' \in \pi(S_j)$ with $d(x'', c_{x'}(t)) < \mu_j$. By 
\[
d(c_x(t), \pi(S_j)) \leq d(c_x(t), x'') \leq d(c_x(t), c_{x'}(t)) + d(c_{x'}(t), x'') < 2\mu_i + \mu_j < \frac{3}{2}\mu_j
\]
(recall $2\mu_i \leq \mu_j/6$, as $j<i$) this would mean $c_x(t) \in (\pi(S_j))_{3\mu_j/2}$, contradicting the assumption that event II hasn't happened. Since we further assumed that event I hasn't happened -- i.e. $c_{x'}(t)$ still lies inside $(M'_+)_{8\mu_i}$ --, we conclude
\[
c_{x'}(t) \in \big( \overline{(M'_+)_{8\mu_i}} \cap \pi(S_i) \big) \setminus \bigcup_{j<i} (\pi(S_j))_{\mu_j}.
\]
By definition, this set contains $\mathcal{D}_i$ as a maximal $\mu_i$-discrete subset, so there is $y \in \mathcal{D}_i$ with $d(y, c_{x'}(t)) < \mu_i$. This yields
\[
d(c_x(t), y) \leq d(c_x(t), c_{x'}(t)) + d(c_{x'}(t), y) < 2\mu_i + \mu_i = 3\mu_i,
\]
i.e. $c_x(t) \in B_{3\mu_i}^M(y) \subseteq \bigcup_{B \in \mathcal{B}} B$, what we wanted to show.

So it only remains to check what occurs if event I or event II happen. Let us begin with the case that event II happens first. Hence we can chose $j<i$ minimally such that $c_x(t) \in (\pi(S_j))_{3\mu_j/2} \subseteq (\pi(S_j))_{2\mu_j}$, where -- by minimality of $j$ -- also $c_x(t) \notin \bigcup_{j'<j} (\pi(S_{j'}))_{3\mu_{j'}/2}$. As event II happened before event I, we get
\[
d(c_x(t), M'_+) \leq d(c_x(t), c_{x'}(t)) + d(c_{x'}(t), M'_+) < 2\mu_i + 8\mu_i < \mu_j < 8 \mu_j
\]
(recall $j<i$), so
\[
c_x(t) \in \big( \overline{(M'_+)_{8\mu_j}} \cap (\pi(S_j))_{2\mu_j} \big) \setminus \bigcup_{j'<j} (\pi(S_{j'}))_{3\mu_{j'}/2}.
\]
Similar to the definition of $c_{x'}$, we can find a flow geodesic $c_{x''}$ which lies inside $\pi(S_j)$ and which fulfills $d(c_{x''}(t), c_x(t)) < 2\mu_j$ after event II happened. As
\[
d(c_{x''}(t), M'_+) \leq d(c_{x''}(t), c_x(t)) + d(c_x(t), M'_+) < 2\mu_j + \mu_j < 8\mu_j,
\]
the analogously defined event I for this index $j$ and the geodesics $c_x$, $c_{x''}$ hasn't happened yet; by minimality of $j$, the analogously defined event II for this data has also not happened. So replacing $i$ by $j$ and $c_{x'}$ by $c_{x''}$ in the above paragraphs, we can repeat the corresponding arguments and deduce that $c_x(t)$ still lies inside $\bigcup_{B \in \mathcal{B}} B$.

Let us now assume that event I happens first; hence $c_{x'}(t) \in \partial (M'_+)_{8\mu_i}$. Again, $c_{x'}(t) \notin \bigcup_{j<i} (\pi(S_j))_{\mu_j}$, because
\[
d(c_x(t), \pi(S_j)) \leq d(c_x(t), c_{x'}(t)) + d(c_{x'}(t), \pi(S_j)) < 2\mu_i + \mu_j < \frac{3}{2}\mu_j
\]
(for suitable $j<i$) would once more mean that event II has already happened, contradicting our assumption. Hence
\[
c_{x'}(t) \in \big( \partial (M'_+)_{8\mu_i} \cap \pi(S_i) \big) \setminus \bigcup_{j<i} (\pi(S_j))_{\mu_j}.
\]
By definition, $\mathcal{D}_i$ contains a maximal $\mu_i$-discrete subset inside this set, so there is some $y \in \mathcal{D}_i$ with $d(y, c_{x'}(t)) < \mu_i$. Again, $d(c_x(t), y) < 3\mu_i$, thus $c_x(t) \in B_{3\mu_i}^M(y) \subseteq \bigcup_{B \in \mathcal{B}} B$, what we wanted to show. Note that by our choice of stretched balls, $B_{3\mu_i}^M(y)$ is the initial ball of some stretched ball $U$; consequently, $c_x(t)$ lies inside $U$ from this point on. As seen in the proof of Lemma \ref{Lem:Laenge_Streckung}, $c_x$ will remain outside of $\bigcup_{B \in \mathcal{B}} B$ after leaving $U$, or lie inside some other stretched ball $U'$; in the latter case, we can repeat the argument, proving the statement.
\end{proof}

The following lemmata state that the covering sets and their intersections are contractible.

\begin{lem}\label{Lem:Ueberdeckung_Kugeln_zusammenziehbar}
Elements of $\mathcal{B}$ are folded sets and thus contractible.
\end{lem}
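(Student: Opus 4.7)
The plan is to split the argument by the two kinds of elements of $\mathcal{B}$: ordinary balls and stretched balls. For ordinary balls I will invoke Lemma \ref{Lem:Durchschnitt_Kugeln_faltbar} (a), and for stretched balls Lemma \ref{Lem:Parabolisch_gestreckte_Kugeln_faltbar}; in either case, once foldability is established, the contractibility claim is immediate from Lemma \ref{Lem:Gefaltete_Mengen_zusammenziehbar}.

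For an ordinary ball $B = B_{3\mu_i}^M(x)$ with $x \in \mathcal{D}_i$, I would lift $x$ to a point $\tilde{x} \in X$ so that $\tilde{x}$ lies on some $Y \in \Sigma_i(\Gamma)$. By the definition of $\mathcal{D}_i$ we have $\tilde{x} \notin \bigcup_{j<i}(\pi(S_j))_{\mu_j}$, which gives $\tilde{x} \in Y \setminus S_{<i}$ and, for $i \geq 1$, the distance bound $d(\tilde{x}, S_{<i}(\Gamma)) \geq \mu_{i-1}$. The inductive definition $\mu_i \leq \min(\epsilon_3(\mu_{i-1})/24,\, \mu_{i-1}/12)$ was tailored precisely so that Lemma \ref{Samet4.7} applies and yields that $Y$ is fixed pointwise by $\Gamma_{12\mu_i}(\tilde{x}) = \Gamma_{4 \cdot 3\mu_i}(\tilde{x})$. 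This is exactly the hypothesis of Lemma \ref{Lem:Durchschnitt_Kugeln_faltbar} (a), so $B_{3\mu_i}^X(\tilde{x})$ is $Y$-foldable and hence $B$ is folded. (The degenerate case $i = 0$ is immediate since then $S_{<0} = \emptyset$.)

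For a stretched ball $B$ with initial ball $B_{3\mu_i}^M(x)$, the center $x \in \mathcal{D}_i \cap \partial(M'_+)_{8\mu_i}$ lies by construction in the $\epsilon(n)$-thin part with respect to a maximal parabolic subgroup $G' = \Gamma_z$. Lifting to $X$, the argument of the previous paragraph still produces the singular submanifold $Y$ through $\tilde{x}$ and verifies the first group of hypotheses of Lemma \ref{Lem:Parabolisch_gestreckte_Kugeln_faltbar}. The extra condition I must verify is that $Y$ contains the parabolic fixed point $z$; this is the main obstacle. My approach is to appeal to the Margulis lemma: any elliptic $\tau$ in the finite stabilizer of $\tilde{x}$ contributing to $Y$ satisfies $d_\tau(\tilde{x}) = 0 < \epsilon(n)$ and therefore lies in the virtually nilpotent group $\Gamma_{\epsilon(n)}(\tilde{x})$, which also contains a parabolic element of $G'$ since $\tilde{x}$ is in the $\epsilon(n)$-thin part near $z$. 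A classical fact about discrete virtually nilpotent subgroups of $\Isom(\H^n)$ containing a parabolic element is that all their elements fix the associated boundary point, so $\tau z = z$; applying this to all generators of the finite subgroup $G$ with $Y = \bigcap_{g \in G} \FIX(g)$ shows that $Y$ extends to $z$ in the sense used in the definition of stretched balls. Lemma \ref{Lem:Parabolisch_gestreckte_Kugeln_faltbar} then applies and gives $Y$-foldability of $B$.

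The principal difficulty I foresee is precisely this verification that $Y$ contains $z$ in the stretched ball case; the remainder of the proof is a careful but routine bookkeeping with the inductive inequalities defining the $\mu_i$ and an appeal to the foldability lemmas already established.
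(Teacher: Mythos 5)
Your proposal follows essentially the same route as the paper's (very brief) proof: lift $x \in \mathcal{D}_i$ to $\widetilde{x}$, identify the unique $Y \in \Sigma_i$ through $\widetilde{x}$ (unique because $\widetilde{x} \notin S_{<i}$), use the definition of $\mathcal{D}_i$ to get $d(\widetilde{x}, S_{<i}) \geq \mu_{i-1}$, invoke Lemma~\ref{Samet4.7} together with the inductive bound $24\mu_i \leq \epsilon_3(\mu_{i-1})$ to conclude $\Gamma_{4\cdot 3\mu_i}(\widetilde{x})$ fixes $Y$ pointwise, and then feed this into Lemma~\ref{Lem:Durchschnitt_Kugeln_faltbar}(a) resp.\ Lemma~\ref{Lem:Parabolisch_gestreckte_Kugeln_faltbar}, finishing with Lemma~\ref{Lem:Gefaltete_Mengen_zusammenziehbar}. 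That part of your argument is correct and matches the paper precisely.

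Where you add genuine content is in the stretched-ball case: you correctly observe that Lemma~\ref{Lem:Parabolisch_gestreckte_Kugeln_faltbar} has the extra hypothesis that $Y$ contains the parabolic fixed point $z$, and that this is not automatic from $Y \in \Sigma_i(\Gamma)$. The paper's one-line claim that ``the assumptions of Lemma~\ref{Lem:Parabolisch_gestreckte_Kugeln_faltbar} are fulfilled'' elides this verification. Your argument --- that the stabilizer of $\widetilde{x}$ sits inside $\Gamma_{\epsilon(n)}(\widetilde{x})$, which by Margulis is discrete and virtually nilpotent, contains a parabolic from $\Gamma_z$ because $\widetilde{x}$ lies in the $\epsilon(n)$-thin part of the cusp, and is therefore an elementary group of parabolic type whose every element fixes $z$ --- is exactly the right way to close this gap. (One can phrase the last step more concretely: each elliptic $\tau$ in the stabilizer then fixes $z$, hence preserves the geodesic from $\widetilde{x}$ to $z$ and, being elliptic, fixes it pointwise; so this geodesic lies in every $\FIX(\tau)$ and hence in $Y$, showing $z \in \partial Y$.) So your proof is correct, and is in fact more complete than the paper's; you should be aware, though, that this level of care was needed because the paper silently assumes the singular submanifolds arising near a cusp all come from $\Sigma(\Gamma_z)$.
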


\begin{proof}
The proof is similar to \cite{Samet} Theorem 4.2 step 2). Let $x \in \mathcal{D}_i$ and $\widetilde{x} \in X$ be a preimage of $x$ in $X$; moreover, let $Y \in \Sigma_i$ be the singular submanifold containing $\widetilde{x}$. By choice of $\mathcal{D}_i$ we have $d(x, \pi(S_{<i})) > \mu_{i-1}$ and thus $d(\widetilde{x}, S_{<i}) > \mu_{i-1}$. Note that since $x \in \mathcal{D}_i \subseteq M_+$, also $\widetilde{x} \in X_+$, hence $\widetilde{x} \in Y \cap X_+$.

By the definition of the $\mu_i$ we have $24\mu_i \leq \epsilon_3(\mu_{i-1})$, so using Lemma \ref{Samet4.7} we conclude that $\Gamma_{24\mu_i}(\widetilde{x})$ fixes $Y$ pointwise. Hence the assumptions of Lemma \ref{Lem:Durchschnitt_Kugeln_faltbar} and \ref{Lem:Parabolisch_gestreckte_Kugeln_faltbar} are fulfilled; using these, we see that the respective ball with (initial) center $\widetilde{x}$ is $Y$-foldable, which proves the statement.
\end{proof}

\begin{lem}\label{Lem:Ueberdeckung_Durchschnitte_zusammenziehbar}
Nonempty intersections of the elements of $\mathcal{B}$ are folded sets and thus contractible.
\end{lem}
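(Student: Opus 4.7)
The plan is to reduce the assertion to Lemma \ref{Lem:Durchschnitt_Kugeln_faltbar} or Lemma \ref{Lem:Durchschnitt_parabolisch_gestreckte_Kugeln_faltbar}, both of which already package the foldability of an intersection once one produces a common singular submanifold $Y$ containing all (initial) centers and has the radii under control. First I would take balls $B_1, \ldots, B_k \in \mathcal{B}$ with $\bigcap_j B_j \neq \emptyset$, lift them to pairwise intersecting (stretched or ordinary) balls $U_1, \ldots, U_k \subseteq \H^n$ (after translating by suitable $\gamma_j \in \Gamma$), and relabel so that $i_1 = \max_j i_j$; thus $U_1$ carries the smallest (initial) hyperbolic radius $3\mu_{i_1}$ and the largest-dimensional associated singular submanifold. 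I denote by $y_j$ the (initial) center of $U_j$ and by $Y_j \in \Sigma_{i_j}$ the singular submanifold satisfying $y_j \in Y_j \setminus S_{<i_j}$.

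The first substantive step is to show that all centers in fact lie on the common submanifold $Y := Y_1$. Since $y_j \in \mathcal{D}_{i_j}$ gives $d(y_j, S_{<i_j}) > \mu_{i_j-1}$, Lemma \ref{Samet4.6} with $\epsilon_1 = \mu_{i_j-1}$ yields $Y_j \subseteq Y_1$ as soon as $d(y_1, y_j) < \epsilon_2(\mu_{i_j-1})$. For two intersecting ordinary balls this is immediate, as $d(y_1, y_j) \le 3\mu_{i_1} + 3\mu_{i_j} \le 6\mu_{i_j} \le \epsilon_2(\mu_{i_j-1})/2$ by the definition of the $\mu_i$. For a stretched or mixed configuration I would adapt the comparison-ball argument from point 2 of the proof of Lemma \ref{Lem:Durchschnitt_parabolisch_gestreckte_Kugeln_faltbar}: replace $U_1$ by its comparison ball at the height of the horosphere through $y_j$, invoke Lemma \ref{Lem:Parabolisch_gestreckte_Kugeln_Abstand}, and use that the Euclidean radius of $U_1$ is the minimal one to get $d(y_1, y_j) < 4\mu_{i_j}$, again strictly below $\epsilon_2(\mu_{i_j-1})$. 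Hence $Y_j \subseteq Y_1$ in every case, and in particular $y_j \in Y_1$ for all $j$.

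I would then split into two cases. If every $U_j$ is an ordinary ball, Lemma \ref{Lem:Durchschnitt_Kugeln_faltbar} applies directly: the assumption that $y_j$ is fixed by $\Gamma_{12\mu_{i_j}}(y_j)$ is supplied by Lemma \ref{Samet4.7} together with $12\mu_{i_j} \le \epsilon_3(\mu_{i_j-1})$, yielding that $\pi(\bigcap_j U_j) = \bigcap_j \pi(U_j)$ is folded and hence contractible. If at least one $U_j$ is a stretched ball associated to a parabolic fixed point $z$, I would treat every ordinary ball appearing in the intersection as the degenerate stretched ball (with $t'_0 = t'_1$) at its center. Such an ordinary ball necessarily sits in the $\epsilon(n)$-thin neighbourhood of $z$ with respect to the parabolic stabiliser $G'$, since it meets a set with this property and the quantitative control $R(n,\nu)$ from Lemma \ref{Lem:Laenge_Streckung_absolut} together with the monotonicity of radii from Lemma \ref{Lem:Monotonie_der_Radien} keeps the entire intersecting configuration in a definite bounded region around $z$. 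The Margulis lemma then forces the finite group whose fixed set is $Y_j$ to lie inside $G'$, so that $z \in Y_j \subseteq Y_1$. The hypotheses of Lemma \ref{Lem:Durchschnitt_parabolisch_gestreckte_Kugeln_faltbar} — including the stronger bound $24\mu_{i_j} \le \epsilon_3(\mu_{i_j-1})$ — are now all in place, and its conclusion again delivers that $\bigcap_j \pi(U_j)$ is folded, hence contractible by Lemma \ref{Lem:Gefaltete_Mengen_zusammenziehbar}.

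The hard part, I expect, is precisely the mixed case: verifying rigorously that an ordinary ball taking part in an intersection with a stretched ball genuinely lives in the $G'$-thin neighbourhood of the single cusp at $z$, so that its singular submanifold contains $z$ and the ball can legitimately be fed into Lemma \ref{Lem:Durchschnitt_parabolisch_gestreckte_Kugeln_faltbar} as a degenerate stretched ball. This is what allows both regimes to be treated by the same machinery and requires carefully combining the quantitative bounds from Lemmas \ref{Lem:Laenge_Streckung_absolut} and \ref{Lem:Monotonie_der_Radien} with the precise invariance of $G'$-thin neighbourhoods and the Margulis lemma \ref{Samet2.1}.
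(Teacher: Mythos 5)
Your overall strategy matches the paper's: reduce the all--ordinary case to Lemma \ref{Lem:Durchschnitt_Kugeln_faltbar}, reduce the remaining cases to Lemma \ref{Lem:Durchschnitt_parabolisch_gestreckte_Kugeln_faltbar}, and for both show that every (initial) center lies on the singular submanifold $Y_1$ of the ball with maximal index. The two places where your argument diverges from the paper's, however, are exactly the places where you have left gaps.

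First, the Margulis-lemma detour is not what the paper does, and it both overshoots and undersubstantiates. To feed an ordinary ball $U_j$ into Lemma \ref{Lem:Durchschnitt_parabolisch_gestreckte_Kugeln_faltbar} as a degenerate stretched ball you do not actually need $z \in Y_j$; you only need $y_j \in Y_1$, because the relevant geodesic $c_j$ from $y_j$ to $z$ lies in $Y_1$ (which is totally geodesic, contains $y_j$, and contains $z$ at infinity since $U_1$ is a $Y_1$-foldable stretched ball). Meanwhile, the Margulis argument itself depends on the claim that the ordinary ball, and more precisely the point $y_j$, lies entirely inside the $\epsilon(n)$-thin region with respect to $G'$; you assert this by vaguely appealing to $R(n,\nu)$ and Lemma \ref{Lem:Monotonie_der_Radien}, but centers in $\mathcal{D}_{n_j}$ only satisfy $d(y_j, M'_+) < 8\mu_{n_j}$ and need not be inside the thin part at all, let alone $3\mu_{n_j}$-deep inside it. So the hard half of your extra step is missing, and the easy half is unnecessary.

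Second, the quantitative control for the mixed case is not in place. You claim $d(y_1, y_j) < 4\mu_{i_j}$ after replacing $U_1$ with a comparison ball; but Lemma \ref{Lem:Parabolisch_gestreckte_Kugeln_Abstand} gives $2\cdot 3\mu + 2\cdot 3\mu' < 12\mu_{i_j}$, and the estimate is for the comparison-ball center $y_1'$, not for $y_1$ itself. More seriously, your relabeling by $i_1 = \max_j i_j$ gives $U_1$ the smallest initial \emph{hyperbolic} radius, but the comparison-ball argument in Lemma \ref{Lem:Durchschnitt_parabolisch_gestreckte_Kugeln_faltbar} works with minimal \emph{Euclidean} radius, and these need not coincide once an ordinary ball is in the mix (ordinary balls are not anchored on $\partial(M'_+)_{8\mu_i}$, so Lemma \ref{Lem:Monotonie_der_Radien} does not apply to them). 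The paper's proof handles exactly this by reducing to a single stretched ball against a single ordinary ball, using the observation that a stretched ball cannot meet an ordinary ball of strictly smaller radius, and then splitting into two sub-cases according to whether the stretched ball's initial center $\widetilde{x}_1$ is nearer to or farther from the thin part than $\widetilde{x}_2$; the distance estimate and the pair of points fed into Lemma \ref{Samet4.6} differ in the two cases. Your sketch treats this case as a trivial adaptation of the all-stretched argument, which it is not.

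So: the framework, the invoked lemmata, and the all-ordinary and all-stretched cases in your proposal are essentially the paper's; the mixed case, which is the genuinely hard part, is not handled correctly --- the $z \in Y_j$ claim is a dead end, the asserted thin-part containment is unjustified, and the radius bookkeeping needs the paper's explicit case split rather than a one-line comparison-ball estimate.
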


\begin{proof}
The main idea is similar to \cite{Samet} Theorem 4.2 step 3), although we have to work considerably harder because of the presence of stretched balls.

Let $x_1, \ldots, x_k \in \mathcal{D}$, where $x_j \in \mathcal{D}_{n_j}$ for $j=1,\ldots,k$, such that the corresponding sets $U_j$ of $\mathcal{B}$ have a non-empty intersection. Note that these balls are either all ordinary balls, or we have an intersection of stretched balls and possibly some ordinary balls. The proof for the case of ordinary balls only is essentially the same as in \cite{Samet} Theorem 4.2 step 3), so we will skip it. Without restriction, let $n_1$ be the maximum of the $n_j$ ($j=1,\ldots,k$).

In a first step, we will treat the case where all the balls are stretched; so let $B_{3\mu_{n_j}}^X(\widetilde{x}_j)$ be the corresponding initial balls of the $U_j$ and note that $\widetilde{x}_j \in X_+$ for all $j=1,\ldots,k$. Denote the singular submanifold containing $\widetilde{x}_j$ by $Y_j \in \Sigma_{n_j}$. By definition of $\mathcal{D}_j$, we have $d(\widetilde{x}_j, S_{<n_j}) \geq \mu_{n_j - 1}$. Observe that by maximality of $n_1$, the point $\widetilde{x}_1$ is the farthest away from the thin part among all $\widetilde{x}_j$: for $j \in \{1,\ldots,k\}$ with $n_1 > n_j$ (i.e. $\mu_{n_j} > \mu_{n_1}$) this follows from Lemma \ref{Lem:Monotonie_der_Radien}; and for those $j$ with $n_1 = n_j$ (i.e. $\mu_{n_1} = \mu_{n_j}$) the point $\widetilde{x}_1$ can be chosen to be the one maximizing the distance to the thin part (among all $\widetilde{x}_j$ with $n_j = n_1$), without restriction. Let $U_1^{(j)}$ be the comparison ball of $U_1$ at the height of $\widetilde{x}_j$, with initial center $\widetilde{x}_1^{(j)}$ and initial radius $3\mu_{n_1}^{(j)}$. Note that $\widetilde{x}_1^{(j)}$ lies in the same singular submanifolds as $\widetilde{x}_1$, so in particular $\widetilde{x}_1^{(j)} \in Y_1$. Since $\widetilde{x}_1$ had the largest distance to the thin part, we get $3\mu_{n_1}^{(j)} \leq 3\mu_{n_1}$ for all $j=2,\ldots,k$; by maximality of $n_1$, we moreover have $\mu_{n_1} \leq \mu_{n_j}$ for all $j$, hence $3\mu_{n_1}^{(j)} \leq 3\mu_{n_j}$. The construction of the comparison balls now allow the application of Lemma \ref{Lem:Parabolisch_gestreckte_Kugeln_Abstand}, which yields
\[
d(\widetilde{x}_j, \widetilde{x}_1^{(j)}) < 2\cdot 3\mu_{n_j} + 2 \cdot 3\mu_{n_1}^{(j)} \leq 12\mu_{n_j}.
\]
Since $12\mu_{n_j} \leq \epsilon_2(\mu_{n_j - 1})$ and $d(\widetilde{x}_j, S_{<n_j}) \geq \mu_{n_j - 1}$, we can use Lemma \ref{Samet4.6} and deduce $Y_j \subseteq Y_1$. Hence all initial centers lie in the same singular submanifold $Y_1$, which already contained the center of the $Y_1$-foldable stretched ball $U_1$. As $\Gamma_{24\mu_{n_j}}(\widetilde{x}_j)$ (for all $j=1,\ldots,k$) fixes $\widetilde{x}_j$ (see the proof of the previous Lemma \ref{Lem:Ueberdeckung_Kugeln_zusammenziehbar}), the statement follows after applying Lemma \ref{Lem:Durchschnitt_parabolisch_gestreckte_Kugeln_faltbar}.

It remains to examine what happens if we have a mixed intersection of ordinary and stretched balls. This situation can be reduced to the case of the intersection of a single ordinary ball and a single stretched ball: if $U_1, \ldots, U_k$ are stretched balls as above such that $n_1 \geq n_j$ for all $j=1,\ldots,k$, we saw that the initial centers of the $U_j$ also lie in the singular submanifold $Y_1$, and a similar statement holds for ordinary balls $U'_1, \ldots, U'_{k'}$ (so the centers of the $U'_j$ lie in $Y'_1$, which contained the center of $U'_1$); so if we can show that either $Y_1 \subseteq Y'_1$ or $Y'_1 \subseteq Y_1$, then all (initial) centers of the mixed intersection would be contained in the same singular submanifold. Applying Lemma \ref{Lem:Durchschnitt_parabolisch_gestreckte_Kugeln_faltbar}, this would yield the statement (the additional assumption that $\Gamma_{24\mu_{n_j}}(\widetilde{x}_j)$ fixes the centers $\widetilde{x}_j$ can again be taken from the proof of Lemma \ref{Lem:Ueberdeckung_Kugeln_zusammenziehbar}).

So let $U_1$ be a stretched ball with initial ball $B_{3\mu_{n_1}}^X(\widetilde{x}_1)$ and $U_2 = B_{3\mu_{n_2}}^X(\widetilde{x}_2)$ be an ordinary ball; denote the singular submanifolds containing $\widetilde{x}_1$ and $\widetilde{x}_2$ by $Y_1 \in \Sigma_{n_1}$ and $Y_2 \in \Sigma_{n_2}$, respectively. We want to show that one of $Y_1 \subseteq Y_2$ or $Y_2 \subseteq Y_1$ always holds.

By the choice of centers, $U_1$ can not intersect an ordinary ball of strictly smaller radius (compare the proof of Lemma \ref{Lem:Monotonie_der_Radien}), so we already have $3\mu_{n_1} \leq 3\mu_{n_2}$, i.e. $n_1 \geq n_2$. If the initial ball of $U_1$ intersects (the ordinary ball) $U_2$, then the above argument for the intersection of ordinary balls can be used, giving $Y_2 \subseteq Y_1$, which proves the statement. Thus without restriction, we can assume that the intersection of $U_1$ and $U_2$ is outside the initial ball of $U_1$. We will distinguish between two cases, depending on the proximity of $\widetilde{x}_1$ and $\widetilde{x}_2$ to the thin part.

\begin{itemize}
\item In the first case, $\widetilde{x}_1$ is closer to the thin part than $\widetilde{x}_2$. In a first step, assume $n_1 = n_2$, i.e. $3\mu_{n_1} = 3\mu_{n_2}$. Note that since $U_1$ and $U_2$ intersect, $U_1$ also intersects the stretched ball $U'_2$ with initial ball $U_2$; let $U''_2$ be the comparison ball of $U'_2$ at height $\widetilde{x}_1$. As $\widetilde{x}_1$ is closer to the thin part than $\widetilde{x}_2$, the initial radius $3\mu''_{n_2}$ of $U''_2$ satisfies $3\mu''_{n_2} \leq 3\mu_{n_2} (=3\mu_{n_1})$. If $\widetilde{x}''_2$ denotes the initial center of $U''_2$ (observe that $\widetilde{x}''_2 \in Y_2$), then using Lemma \ref{Lem:Parabolisch_gestreckte_Kugeln_Abstand} we deduce
\[
d(\widetilde{x}_1, \widetilde{x}''_2) < 2 \cdot 3\mu_{n_1} + 2 \cdot 3\mu''_{n_2} \leq 12\mu_{n_1}.
\]
By definition, $12\mu_{n_1} \leq \epsilon_2(\mu_{n_1 - 1})$ and $d(\widetilde{x}_1, S_{<n_1}) \geq \mu_{n_1 - 1}$, so Lemma \ref{Samet4.6} yields $Y_2 \subseteq Y_1$, the statement.

Assume now that $n_1 > n_2$, i.e. $3\mu_{n_1} < 3\mu_{n_2}$. Let $x \in U_1 \cap U_2$ and $c_x$ be the flow geodesic towards the parabolic fixed point $z$, with parametrization $c_x(0) = x$ and $c_x(-\infty) = z$; moreover, let $HS_1$ and $HS_2$ be the horospheres around $z$ containing $\widetilde{x}_1$ and $\widetilde{x}_2$, respectively. As $\widetilde{x}_1$ is closer to the thin part and the initial ball of $U_1$ does not intersect $U_2$ -- i.e. $c_x$ (coming from $z$) leaves $U_2$ before entering the initial ball of $U_1$ --, we have $c_x(t_1) \in HS_1$ and $c_x(t_2) \in HS_2$ for suitable $0 < t_1 < t_2$, where $t_2 < 3\mu_{n_2}$ (recall that $x \in U_2 = B_{3\mu_{n_2}}^X(\widetilde{x}_2)$). Note that -- on its way from $x$ to $c_x(t_1)$ -- $c_x$ has to meet the initial ball $B_{3\mu_{n_1}}^X(\widetilde{x}_1)$ of $U_1$ (compare the proof of Lemma \ref{Lem:Monotonie_der_Radien}). Thus there is some $t'$ with $0 < t' < t_1$ (hence $t' < 3\mu_{n_2}$) such that $c_x(t') \in B_{3\mu_{n_1}}^X(\widetilde{x}_1)$. We deduce
\begin{align*}
d(\widetilde{x}_1, \widetilde{x}_2) &\leq d(\widetilde{x}_1, c_x(t')) + d(c_x(t'), c_x(0)) + d(c_x(0), \widetilde{x}_2) \\
&= d(\widetilde{x}_1, c_x(t')) + t' + d(x, \widetilde{x}_2) \\
&< 3\mu_{n_1} + t' + 3\mu_{n_2} \\
&< 3\mu_{n_1} + 3\mu_{n_2} + 3\mu_{n_2} \\
&< 7\mu_{n_2}.
\end{align*}
Since $7\mu_{n_2} < 12\mu_{n_2} \leq \epsilon_2(\mu_{n_2 - 1})$, we conclude $d(\widetilde{x}_1, \widetilde{x}_2) < \epsilon_2(\mu_{n_2 - 1})$. By definition of $\mathcal{D}_i$, we also have $d(\widetilde{x}_2, S_{<n_2}) \geq \mu_{n_2 - 1}$; applying Lemma \ref{Samet4.6} gives $Y_2 \subseteq Y_1$, the desired statement.

\item In the other case we assume that $\widetilde{x}_1$ is farther away from the thin part than $\widetilde{x}_2$. So the comparison ball $U'_1$ of $U_1$ at the height of $\widetilde{x}_2$ satisfies $3\mu'_{n_1} \leq 3\mu_{n_1}$, where $B_{3\mu'_{n_1}}^X(\widetilde{x}'_1)$ denotes  the initial ball of $U'_1$ (note that $\widetilde{x}'_1 \in Y_1$, as above). Using Lemma \ref{Lem:Parabolisch_gestreckte_Kugeln_Abstand} and $3\mu_{n_1} \leq 3\mu_{n_2}$, we deduce
\[
d(\widetilde{x}'_1, \widetilde{x}_2) < 2 \cdot 3\mu'_{n_1} + 2 \cdot 3\mu_{n_2} \leq 12\mu_{n_2}.
\]
Since $12\mu_{n_2} \leq \epsilon_2(\mu_{n_2 - 1})$ und $d(\widetilde{x}_2, S_{<n_2}) \geq \mu_{n_2 - 1}$, applying Lemma \ref{Samet4.6} again yields $Y_2 \subseteq Y_1$, which finishes the proof.
\end{itemize}
\end{proof}

Our construction guarantees that we can control the number of covering sets and the number of nonempty intersections between such sets linearly by the volume:

\begin{lem}\label{Lem:Ueberdeckung_Anzahl_und_Schnitte}
There are constants $C = C(n, \eta, \nu), D = D(n, \nu) > 0$ satisfying the following statements.
\begin{enumerate}
\item We have $|\mathcal{B}| \leq C \cdot \Vol(M)$.

\item A set of $\mathcal{B}$ intersects at most $D$ other sets of $\mathcal{B}$.
\end{enumerate}
\end{lem}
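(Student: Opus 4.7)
The plan is to exploit two properties of the center set $\mathcal{D}$: (a) the centers are uniformly separated in $M$, and (b) every element of $\mathcal{B}$ is contained in a ball of uniformly bounded radius around its (initial) center.

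\textbf{Separation of the centers.} First I would establish that $\mathcal{D} = \bigcup_{i=0}^{n} \mathcal{D}_i$ is $\mu_n$-discrete in $M$. Two centers $x,y \in \mathcal{D}_i$ inside the same stratum satisfy $d(x,y) \geq \mu_i \geq \mu_n$ by definition of $\mathcal{D}_i$ as a $\mu_i$-discrete set. If $x \in \mathcal{D}_i$ and $y \in \mathcal{D}_j$ with $i<j$, then by construction $y \notin (\pi(S_i))_{\mu_i}$; since $x \in \pi(S_i)$, this again forces $d(x,y) \geq \mu_i \geq \mu_n$.

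\textbf{Volume bound (part 1).} The separation just shown implies that the balls $\{B_{\mu_n/2}^M(x)\}_{x \in \mathcal{D}}$ are pairwise disjoint in $M$. Each center lies in $M_+$, so its preimage $\widetilde{x} \in X_+$ has finite stabilizer of order $\leq \eta$; since $\mu_n/2$ is far below the Margulis $\epsilon$, no non-stabilizer element of $\Gamma$ moves $B_{\mu_n/2}^X(\widetilde{x})$ into itself, and hence
\[
\Vol\bigl(B_{\mu_n/2}^M(x)\bigr) \geq \frac{1}{\eta}\,\Vol\bigl(B_{\mu_n/2}^X(\widetilde{x})\bigr) = \frac{V_0(n,\nu)}{\eta}
\]
with $V_0(n,\nu) > 0$ the hyperbolic volume of a ball of radius $\mu_n/2$ in $\H^n$. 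Summing over $\mathcal{D}$ and using $|\mathcal{B}| = |\mathcal{D}|$ gives $|\mathcal{B}| \leq C \cdot \Vol(M)$ with $C := \eta/V_0(n,\nu) = C(n,\eta,\nu)$.

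\textbf{Local intersection bound (part 2).} By Lemma \ref{Lem:Laenge_Streckung_absolut} (together with the trivial bound $3\mu_0 \leq R(n,\nu)$ for ordinary balls), every $B \in \mathcal{B}$ with (initial) center $y$ is contained in $B_{R(n,\nu)}^M(y)$. Hence if $B_1,B_2 \in \mathcal{B}$ intersect, their (initial) centers $y_1,y_2$ satisfy $d(y_1,y_2) \leq 2R(n,\nu)$. Fix $B_1$ with center $y_1$ and a lift $\widetilde{y}_1 \in X$. For every other $B \in \mathcal{B}$ intersecting $B_1$, pick a lift of the corresponding (initial) center in $X$ that witnesses the intersection, i.e.\ choose $\widetilde{y}_B$ with $d(\widetilde{y}_1,\widetilde{y}_B) \leq 2R(n,\nu)$. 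Distinct $B,B' \in \mathcal{B}$ correspond to distinct points $y_B,y_{B'} \in \mathcal{D} \subseteq M$ at $M$-distance $\geq \mu_n$, and since the projection $X \to M$ is distance-non-increasing, the chosen lifts satisfy $d(\widetilde{y}_B,\widetilde{y}_{B'}) \geq \mu_n$. Thus the chosen lifts form a $\mu_n$-discrete subset of $B_{2R(n,\nu)}^X(\widetilde{y}_1)$, whose cardinality is bounded by
\[
D := \frac{\Vol_X\bigl(B_{2R(n,\nu)+\mu_n/2}^X\bigr)}{\Vol_X\bigl(B_{\mu_n/2}^X\bigr)} = D(n,\nu),
\]
a purely hyperbolic volume ratio in dimension $n$; note in particular that $\eta$ does not enter here.

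\textbf{Main obstacle.} The only delicate point I foresee is in part 2: to obtain a bound $D(n,\nu)$ independent of $\eta$, one must not lose a factor of $\eta$ when passing from intersections in $M$ to discrete points in $X$. The cleanest way is the lift-selection argument above, which turns ``intersecting $B_1$ in $M$'' directly into ``a specific lift at distance $\leq 2R$ from $\widetilde{y}_1$'', and then uses only the $M$-separation of centers -- which lifts for free -- to count them in hyperbolic space.
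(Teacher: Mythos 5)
Your argument is correct and is exactly the packing argument that the paper delegates to (Samet, Theorem~4.2, steps~4 and~5, with the stretched balls replaced by the $R(n,\nu)$-balls of Lemma~\ref{Lem:Laenge_Streckung_absolut}, as the paper explicitly mentions). The $\mu_n$-discreteness of $\mathcal{D}$, the quotient volume bound costing a factor $1/\eta$ for part~1, and the lift-selection argument that makes $D$ independent of $\eta$ in part~2 all match what the cited proof does; your write-up is simply more self-contained than the paper's pointer to the literature.
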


\begin{proof}
The proof of the first statement is a standard argument which is basically identical to \cite{Samet} Theorem 4.2 step 4). Note that our $\eta$ corresponds to the $m$ in \cite{Samet}, which explains the dependency of $C$ on $\eta$; the dependency of $C$ on $\nu$ stems from the fact that we defined our thick part using $\nu$ as a lower bound on the hyperbolic displacement (so the thin part consisted only of cusps).

Note that in order to estimate the maximal number of intersecting sets, we can replace the stretched balls by larger ordinary balls is in Lemma \ref{Lem:Laenge_Streckung_absolut} and use the resulting value as an upper bound. But for the (larger) ordinary balls, the argument is essentially the same as in \cite{Samet} Theorem 4.2, this time step 5)\footnote{The usual proof shows that there is no dependency of $D$ on $\eta$; the dependency on $\nu$ again comes from the fact that we needed $\nu$ to put the tubes in the thick part.}.

Detailed proofs of these statements not omitting these details can also be taken from \cite{SenskaDissertation} Lemma 3.50.
\end{proof}

\subsection{Nerve construction}

Our goal is now to build the desired simplicial complex out of the covering via the nerve construction. Let $\mathcal{B}_g \subseteq \mathcal{B}$ be the set of all stretched balls in $\mathcal{B}$; define
\[
N_+ := \bigcup_{B \in \mathcal{B}} B \qquad\text{and}\qquad N_0 := \bigcup_{B \in \mathcal{B}_g} B.
\]
We denote the nerve complexes (see also \cite{Sauer} section 2.2) associated to $\mathcal{B}$ and $\mathcal{B}_g$ by $N(\mathcal{B})$ and $N(\mathcal{B}_g)$, respectively; obviously, we can think of $N(\mathcal{B}_g)$ as a subcomplex of $N(\mathcal{B})$. The following lemma will be needed to prove the homotopy equivalence between the nerve complexes and the thick part.

\begin{lem}\label{Lem:Kommutatives_Diagramm_M_+_N_+}
There is a homotopy equivalence $F: M_+ \stackrel{\simeq}{\rightarrow} N_+$ which induces a commutative diagram, with vertical maps given by the inclusions $\partial M_+ \hookrightarrow M_+$ and $N_0 \hookrightarrow N_+$:
\begin{center}
\begin{tikzcd}
\partial M_+ \ar[r, "\simeq", "F|_{\partial M_+}"']	\ar[d, hook]		& N_0 \ar[d, hook] \\
M_+ \ar[r, "\simeq", "F"']																					& N_+.
\end{tikzcd}
\end{center}
\end{lem}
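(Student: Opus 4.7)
My plan is to build $F$ from the flow $f : M_+ \times [0,1] \to M_+$ of Lemma \ref{Lem:Fluss_starke_Deformationsretraktion} combined with the flow-stability of the cover from Lemma \ref{Lem:Ueberdeckung_ist_stabil}. Since $M'_+ \subseteq N_+$ by Lemma \ref{Lem:Ueberdeckung_ueberdeckt} and no flow geodesic leaves $N_+$ once it has entered (until reaching $\partial M'_+ \subseteq N_+$), the restriction of $f$ to $N_+\times[0,1]$ takes values in $N_+$ and is a strong deformation retraction of $N_+$ onto $M'_+$. Hence both $M_+$ and $N_+$ deformation retract onto $M'_+$ along the same flow, and the inclusion $i : N_+ \hookrightarrow M_+$ is already a homotopy equivalence.

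To obtain an explicit $F$ with the correct behaviour on $\partial M_+$, I define the first-entry time $\tau : M_+ \to [0,1]$ by $\tau(x) := \inf\{t \in [0,1] : f(x,t) \in N_+\}$. This is well-defined since the flow eventually lands in $M'_+ \subseteq N_+$, and its continuity follows from the openness of $N_+$, the continuity of $f$, and the stability property (which rules out re-exits after a grazing intersection). After a uniform regularization of the entry time -- e.g.\ replacing $\tau(x)$ by $\min(\tau(x)+\delta,1)$ for a sufficiently small $\delta > 0$, so that $f(x,\tau(x)+\delta)$ lies strictly inside $N_+$ by stability -- one obtains a continuous map $F : M_+ \to N_+$. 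The homotopy $H(x,t) := f(x, t\cdot(\tau(x)+\delta))$ then witnesses $F \simeq \id_{M_+}$ as maps into $M_+$, so $F$ is a homotopy inverse of $i$ and in particular a homotopy equivalence.

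The commutativity of the diagram requires $F(\partial M_+) \subseteq N_0$, which is the crucial geometric point. The stretched balls form the outer layer of $N_+$: by Lemma \ref{Lem:Laenge_Streckung} every stretched ball reaches to within $\mu_{-1}/4$ of $\partial M_+$, while any ordinary covering ball $B_{3\mu_i}^M(y)$ has $d(y, M'_+) < 8\mu_i$ and radius $3\mu_i$, so each of its points lies within $11\mu_i \leq 11\mu_{-1}/12 < \mu_{-1}$ of $M'_+$, i.e.\ at distance $> \mu_{-1}$ from $\partial M_+$. Consequently, the annular zone adjacent to $\partial M_+$ of width $\mu_{-1}$ meets only stretched balls, so the flow geodesic starting at any $x \in \partial M_+$ must first enter $N_+$ through the boundary of a stretched ball; choosing $\delta$ smaller than the thickness of that stretched ball at the entry point forces $F(x) \in N_0$.

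It remains to show that $F|_{\partial M_+} : \partial M_+ \to N_0$ is itself a homotopy equivalence. Since distinct flow geodesics are disjoint on the collar between $\partial M_+$ and $\partial M'_+$, $F|_{\partial M_+}$ is an injective continuous map from a compact space, hence embeds $\partial M_+$ as a codimension-$1$ ``entry surface'' $\Sigma \subseteq N_0$. I then deformation retract $N_0$ onto $\Sigma$ by flowing points of $N_0$ backward along $f$ until the first exit from $N_0$ (equivalently, until hitting $\Sigma$); the convex, flow-tube shape of each stretched ball, whose sides are parallel to the cusp geodesics, makes this backward retraction well-defined and continuous after the same sort of regularization. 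I expect the main technical obstacle to be the continuity of $\tau$ together with the verification of this backward retraction from $N_0$, both of which ultimately rely on the convexity and flow-alignment of stretched balls in $\H^n$ that were established in Lemmas \ref{Lem:Parabolisch_gestreckte_Kugeln_faltbar} and \ref{Lem:Durchschnitt_parabolisch_gestreckte_Kugeln_faltbar}.
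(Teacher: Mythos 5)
Your overall geometric intuition is sound, and your key observation — that the $\mu_{-1}$-wide zone adjacent to $\partial M_+$ meets only stretched balls — is exactly the geometric fact the paper exploits. But you take a genuinely different route (first-entry time along the flow, plus a backward retraction of $N_0$ onto an ``entry surface'' $\Sigma$), whereas the paper introduces an intermediate shrinking $M''_+ := M \setminus (M_-)_{3\mu_{-1}/4}$ chosen so that \emph{every} stretched ball meets $\partial M''_+$ while \emph{no} ordinary ball does, and then flows both $(\partial M_+, M_+)$ and $(N_0, N_+)$ by a uniform amount to this fixed level, identifying $N'_+ = M''_+$ and composing the resulting commutative squares. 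The fixed-level flow avoids first-entry times entirely, and that is where your proof has two gaps.

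First, the continuity of $\tau$ (and hence of $F = f(\cdot, \min(\tau + \delta,1))$) is not established by the $\delta$-shift. Stability says the flow does not re-exit $N_+$ once it has entered, but it does not rule out $\tau$ jumping \emph{upward}: if the flow geodesic of $x_0$ is tangent to the (Euclidean) boundary of a covering ball — e.g.\ a vertical line at distance exactly equal to the Euclidean radius of a stretched ball — then nearby geodesics on one side enter there, giving $\tau(x') \approx t_1$, while the geodesic of $x_0$ (and those on the other side) first enter a different ball at some later $t_2 > t_1$. Adding $\delta$ does not remove this finite discontinuity of $F$; you would need a separate argument that tangencies never occur, or a fixed-time (rather than first-entry) flow as in the paper.

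Second, your final step deformation-retracts $N_0$ onto $\Sigma := F(\partial M_+)$ by flowing backward ``until the first exit from $N_0$ (equivalently, until hitting $\Sigma$)''. That equivalence is unjustified: $\Sigma$ is (approximately) the first-exit locus from $N_+$, not from $N_0$. Going forward, after entering a stretched ball the geodesic may leave $N_0$ into an ordinary ball of $N_+ \setminus N_0$ well before $\partial M'_+$, and nothing you cite forbids it from re-entering $N_0$ afterward; for a point on such a re-entered piece, the first backward exit from $N_0$ is not on $\Sigma$. (Moreover, since $F$ pushes $\delta$ inward, $\Sigma$ is not literally a first-exit set of anything.) In the paper this problem does not arise, because $N_0$ and $\partial M''_+$ are shown to be images of one another under the fixed-level flow, directly producing the homotopy equivalence $N_0 \simeq \partial M''_+ \simeq \partial M_+$ inside the commutative square, with no auxiliary surface $\Sigma$ needed.
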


\begin{proof}
Let $M''_+$ be the shrinking of the thick part $M_+$ by $3\mu_{-1}/4$, i.e. $M''_+ := M \setminus (M_-)_{{3\mu_{-1}}/{4}}$. By the choice of the stretching length (see Lemma \ref{Lem:Laenge_Streckung}) we know that every stretched ball $B \in \mathcal{B}_g$ intersects $\partial M''_+$; on the other hand, no ordinary ball of $\mathcal{B}$ meets the boundary $\partial M''_+$, see Figure \ref{Bild_Beweis_Homotopieaequivalenz_M+_N+_Lage_der_Mengen}.

{
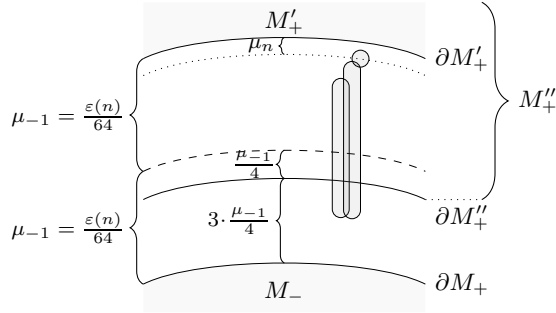
\begin{figure}
\centering

\begin{tikzpicture}[scale=0.75]


\begin{scope}[xshift=0cm,yshift=0cm]


\fill[color=gray, opacity=0.05] (0,0) .. controls (1,0.5) and (4,0.5) .. (5,0) -- (5,-0.5) -- (0,-0.5) -- cycle;
\draw[very thin] (0,0) .. controls (1,0.5) and (4,0.5) .. (5,0) node[right] {\footnotesize$\partial M_+$};
\node at (2.5,-0.15) {\footnotesize$M_-$};


\draw[very thin, dashed] (0,2) .. controls (1,2.5) and (4,2.5) .. (5,2);


\fill[color=gray, opacity=0.05] (0,4) .. controls (1,4.5) and (4,4.5) .. (5,4) -- (5,5) -- (0,5) -- cycle;
\draw[very thin] (0,4) .. controls (1,4.5) and (4,4.5) .. (5,4) node[right] {\footnotesize$\partial M'_+$};
\node at (2.5,4.65) {\footnotesize$M'_+$};


\draw[very thin] (0,1.5) .. controls (1,2) and (4,2) .. (5,1.5) node[right,yshift=-7pt] {\footnotesize$\partial M''_+$};

\draw[thin, dotted] (5,1.5) -- (6,1.5);

\draw [decorate,decoration={brace,mirror,amplitude=10pt},xshift=1cm,yshift=0pt]
(5,1.5) -- (5,5) node [black,midway,xshift=0.75cm] 
{\footnotesize $M''_+$};


\draw[thin, dotted] (0,3.7) .. controls (1,4.2) and (4,4.2) .. (5,3.7);


\draw [decorate,decoration={brace,amplitude=5pt},xshift=0pt,yshift=0pt]
(0,0) -- (0,2) node [black,midway,xshift=-1cm] 
{\scriptsize $\mu_{-1} = \frac{\epsilon(n)}{64}$};

\draw [decorate,decoration={brace,amplitude=5pt},xshift=0pt,yshift=0pt]
(0,2) -- (0,4) node [black,midway,xshift=-1cm] 
{\scriptsize $\mu_{-1} = \frac{\epsilon(n)}{64}$};


\draw[decorate,decoration={brace,amplitude=3pt},xshift=0pt,yshift=0pt]
(2.5,0.375) -- (2.5,1.875) node [black,midway,xshift=-0.6cm] 
{\scriptsize $3\!\cdot\!\frac{\mu_{-1}}{4}$};

\draw[decorate,decoration={brace,amplitude=3pt},xshift=0pt,yshift=0pt]
(2.5,1.875) -- (2.5,2.375) node [black,midway,xshift=-0.4cm] 
{\scriptsize $\frac{\mu_{-1}}{4}$};

\draw[decorate,decoration={brace,amplitude=2pt},xshift=0pt,yshift=0pt]
(2.5,4.075) -- (2.5,4.375) node [black,midway,xshift=-0.3cm] 
{\scriptsize$\mu_n$};



\fill[color=gray, opacity=0.1] (3.65,3.5) arc (0:180:0.15cm) -- (3.35,1.33) arc (180:360:0.15cm) -- cycle;
\draw[ultra thin] (3.65,3.5) arc (0:180:0.15cm) -- (3.35,1.33) arc (180:360:0.15cm) -- cycle;


\fill[color=gray, opacity=0.1] (3.85,3.8) arc (0:180:0.15cm) -- (3.55,1.31) arc (180:360:0.15cm) -- cycle;
\draw[ultra thin] (3.85,3.8) arc (0:180:0.15cm) -- (3.55,1.31) arc (180:360:0.15cm) -- cycle;


\fill[color=gray, opacity=0.1] (3.85,4) circle (0.15cm);		
\draw[ultra thin] (3.85,4) circle(0.15cm);									


\end{scope}


\end{tikzpicture}

\caption{Simplified depiction of the position of the covering sets. Ordinary balls do not exceed the dashed line in the middle and thus can not intersect $\partial M''_+$. Stretched balls always intersect $\partial M''_+$, but do not meet the upper dotted line. $N_+$ consists of all balls (ordinary and stretched), whereas $N_0 \subseteq N_+$ is made up of all stretched balls. Observe that $M''_+ \subseteq N_+$.
}
\label{Bild_Beweis_Homotopieaequivalenz_M+_N+_Lage_der_Mengen}

\end{figure}
}

Similar to Lemma \ref{Lem:Fluss_starke_Deformationsretraktion}, the flow away from the thin part up to $\partial M''_+$ (where we stop flowing) induces a commutative diagram
\begin{center}
\begin{tikzcd}
\partial M_+ \ar[r, "\simeq"]	\ar[d, hook]		& \partial M''_+ \ar[d, hook] \\
M_+ \ar[r, "\simeq"]													& M''_+.
\end{tikzcd}
\end{center}
Moreover, $N_+$ is stable under this flow (compare Lemma \ref{Lem:Ueberdeckung_ist_stabil}); by the choice of the shrinking $3\mu_{-1}/4$, the same is true for $N_0$. Let $N'_+$ and $N'_0$ denote the images of $N_+$ and $N_0$ under this flow up to $\partial M''_+$, then we get a commutative diagram
\begin{center}
\begin{tikzcd}
N_0 \ar[r, "\simeq"]	\ar[d, hook]		& N'_0 \ar[d, hook] \\
N_+ \ar[r, "\simeq"]									& N'_+.
\end{tikzcd}
\end{center}
Observe that $N'_+ = M''_+$. On the other hand -- by the construction of the stretched balls --, $N'_+$ and $N'_0$ are also stable under the flow in opposite direction (i.e. flowing towards $M_-$) up to $\partial M''_+$; this yields a homotopy equivalence $N'_+ \simeq M''_+$, which itself induces a homotopy equivalence $N'_0 \simeq \partial M''_+$. Consequently, we get the commutative diagram
\begin{center}
\begin{tikzcd}
N'_0 \ar[r, "\simeq"]	\ar[d, hook]		& \partial M''_+ \ar[d, hook] \\
N'_+ \ar[r, "\simeq"]									& M''_+.
\end{tikzcd}
\end{center}
The desired diagram of the statement is now obtained by composing the above diagrams, where -- if needed -- the horizontal arrows can be reversed by taking the respective homotopy inverses.
\end{proof}

Our main result will depend on the following homotopy equivalence.

\begin{lem}\label{Lem:Dicker_Teil_homotopieaequivalent_zu_Simplizialkomplex}
$(M_+, \partial M_+)$ is as a pair homotopy equivalent to $(N(\mathcal{B}), N(\mathcal{B}_g))$.
\end{lem}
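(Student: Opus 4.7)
The plan is to reduce the claim to the Nerve lemma, applied in a relative form to the cover $\mathcal{B}$ of $N_+$ and its subfamily $\mathcal{B}_g$ covering $N_0$, and then to splice the resulting homotopy equivalence with the one provided by Lemma \ref{Lem:Kommutatives_Diagramm_M_+_N_+}.

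First, I would verify that $\mathcal{B}$ and $\mathcal{B}_g$ are good covers of $N_+$ and $N_0$, respectively. The covering property is trivial from the definitions $N_+ = \bigcup_{B\in\mathcal{B}} B$ and $N_0 = \bigcup_{B\in\mathcal{B}_g} B$. Every element of $\mathcal{B}$ is contractible by Lemma \ref{Lem:Ueberdeckung_Kugeln_zusammenziehbar}, and every nonempty finite intersection is contractible by Lemma \ref{Lem:Ueberdeckung_Durchschnitte_zusammenziehbar}; the analogous statements for $\mathcal{B}_g$ follow immediately since $\mathcal{B}_g\subseteq\mathcal{B}$. Hence both covers satisfy the hypotheses of the Nerve lemma (e.g.\ in the form used in \cite{Sauer} section 2.2).

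Second, I would apply a relative version of the Nerve lemma to the inclusion $N_0\hookrightarrow N_+$. Since $\mathcal{B}_g\subseteq\mathcal{B}$, the nerve $N(\mathcal{B}_g)$ is a subcomplex of $N(\mathcal{B})$, and the standard naturality of the nerve construction under inclusions of good covers yields a commutative square
\begin{center}
\begin{tikzcd}
N_0 \ar[r, "\simeq"] \ar[d, hook] & N(\mathcal{B}_g) \ar[d, hook] \\
N_+ \ar[r, "\simeq"] & N(\mathcal{B})
\end{tikzcd}
\end{center}
in which both horizontal maps are homotopy equivalences; equivalently, $(N_+,N_0)$ and $(N(\mathcal{B}),N(\mathcal{B}_g))$ are homotopy equivalent as pairs. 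Composing with the pair homotopy equivalence $(M_+,\partial M_+)\simeq(N_+,N_0)$ from Lemma \ref{Lem:Kommutatives_Diagramm_M_+_N_+} then gives the desired pair homotopy equivalence $(M_+,\partial M_+)\simeq(N(\mathcal{B}),N(\mathcal{B}_g))$.

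The main obstacle is the relative form of the Nerve lemma, i.e.\ upgrading the two separate equivalences $N_+\simeq N(\mathcal{B})$ and $N_0\simeq N(\mathcal{B}_g)$ into a commuting square with the respective inclusions. The cleanest route is via the Mayer--Vietoris blowup (or equivalently the homotopy colimit) of the cover: for a good cover the canonical maps from the blowup to the covered space and to the nerve are both homotopy equivalences, and each of these maps is natural with respect to inclusions of good covers, so the inclusion $\mathcal{B}_g\hookrightarrow\mathcal{B}$ induces precisely the commutative square above. All the remaining steps are then formal.
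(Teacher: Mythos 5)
Your proof follows the same overall strategy as the paper's (good cover, nerve lemma via the Mayer--Vietoris blowup / generalized nerve space, splice with Lemma~\ref{Lem:Kommutatives_Diagramm_M_+_N_+}), but it glosses over the step that the paper has to handle carefully, and glosses over it in a way that is actually false as stated. You assert that a commuting square with vertical inclusions and horizontal homotopy equivalences is ``equivalently'' a homotopy equivalence of pairs. That implication is not automatic: upgrading two separate homotopy equivalences plus a commuting square to an honest pair homotopy equivalence requires a cofibration hypothesis on the vertical maps (this is the classical gluing theorem for cofibrations). In the intermediate square you form, with $N_0 \hookrightarrow N_+$ on the left, there is no reason for this inclusion of open subsets of $M$ to be a cofibration, so the theorem doesn't apply there. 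The same issue appears when you ``compose with the pair homotopy equivalence from Lemma~\ref{Lem:Kommutatives_Diagramm_M_+_N_+}'': that lemma only provides a commuting square, not a pair homotopy equivalence, and again the vertical map $N_0 \hookrightarrow N_+$ is not known to be a cofibration.

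The paper avoids both problems by never trying to produce the intermediate pair equivalences. Instead it splices Lemma~\ref{Lem:Kommutatives_Diagramm_M_+_N_+} and the two nerve-lemma squares into one large commuting diagram whose \emph{outermost} vertical maps are $\partial M_+ \hookrightarrow M_+$ (inclusion of the boundary of a manifold-with-boundary, hence a cofibration) and $N(\mathcal{B}_g) \hookrightarrow N(\mathcal{B})$ (inclusion of a subcomplex, hence a cofibration), and then applies the zig-zag argument from \cite{Sauer} Theorem~3.1 step~4 to the whole diagram at once. Your write-up would be correct if you replaced the ``equivalently'' step with this: keep the full zig-zag, observe that only the two end columns need to be cofibrations, verify that they are, and invoke the gluing lemma for cofibrations (or the argument of \cite{Sauer} Theorem~3.1 step~4) for the combined diagram.
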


\begin{proof}
By Lemma \ref{Lem:Ueberdeckung_Kugeln_zusammenziehbar} and \ref{Lem:Ueberdeckung_Durchschnitte_zusammenziehbar} we know that the (open) covers of $N_+$ and $N_0$ by $\mathcal{B}$ and $\mathcal{B}_g$, respectively, are good covers\footnote{In the sense that the covering sets and their non-empty intersections are contractible.}. Using \cite{Sauer} Theorem 2.7 we get a commutative diagram
\begin{center}
\begin{tikzcd}
N_0 \ar[d, hook, "j"] 		& (N_0)_{\mathcal{B}_g} \ar[d, hook, "k"] \ar[l, "\simeq"'] \ar[r, "\simeq"]		& N(\mathcal{B}_g) \ar[d, hook] \\
N_+							 		& (N_+)_{\mathcal{B}} \ar[l, "\simeq"', "G"] \ar[r, "\simeq"]										& N(\mathcal{B}),
\end{tikzcd}
\end{center}
where the vertical maps are given by the respective inclusions; here, the spaces $(N_0)_{\mathcal{B}_g}$ and $(N_+)_{\mathcal{B}}$ are the generalized nerve spaces as in \cite{Sauer} section 2.2. Attaching the diagram of Lemma \ref{Lem:Kommutatives_Diagramm_M_+_N_+} on the left hand side yields the commutative diagram
\begin{equation}
\begin{tikzcd}\label{Diagramm:Beweis_Hauptresultat_Orbifaltigkeiten}\tag{$*$}
\partial M_+ \ar[d, hook] \ar[r, "\simeq"]		& N_0 \ar[d, hook, "j"] 		& (N_0)_{\mathcal{B}_g} \ar[d, hook, "k"] \ar[l, "\simeq"'] \ar[r, "\simeq"]		& N(\mathcal{B}_g) \ar[d, hook] \\
M_+ \ar[r, "\simeq", "F"']													& N_+							 		& (N_+)_{\mathcal{B}} \ar[l, "\simeq"', "G"] \ar[r, "\simeq"]										& N(\mathcal{B}),
\end{tikzcd}
\end{equation}
where the outer vertical maps are cofibrations, because $\partial M_+ \hookrightarrow M_+$ is the inclusion of the boundary and $N(\mathcal{B}_g) \hookrightarrow N(\mathcal{B})$ the inclusion of a subcomplex.

The diagram (\ref{Diagramm:Beweis_Hauptresultat_Orbifaltigkeiten}) can take the role of diagram (11) in step 4 of the proof of \cite{Sauer} Theorem 3.1. Note that the second column of that diagram (11) has no counterpart in our diagram (\ref{Diagramm:Beweis_Hauptresultat_Orbifaltigkeiten}); this is no issue, as that column was only needed to construct a diagram similar to the one in our Lemma \ref{Lem:Kommutatives_Diagramm_M_+_N_+}. Repeating the arguments of \cite{Sauer} Theorem 3.1 step 4 with our diagram (\ref{Diagramm:Beweis_Hauptresultat_Orbifaltigkeiten}) replacing diagram (11) there now yields the statement.
\end{proof}

Of course the impact of Lemma \ref{Lem:Dicker_Teil_homotopieaequivalent_zu_Simplizialkomplex} depends on if we can control the complexity of the simplicial pair $(N(\mathcal{B}), N(\mathcal{B}_g))$; here, Lemma \ref{Lem:Ueberdeckung_Anzahl_und_Schnitte} will come into play. Before summarizing all these statements in our main result, we will see that the case of arithmetic, non-uniform lattices $\Gamma$ is particularly nice -- all constants will only depend on the dimension $n$ (and not on the other constants $\eta$ and $\nu$):

\begin{lem}\label{Lem:Arithmetisch_nicht-uniform_gute_Konstanten}
Let $\mathfrak{A}_n$ be the class of arithmetic, non-uniform lattices in $\Isom(\H^n)$, then:
\begin{enumerate}
\item There is a constant $\eta = \eta(n) \in \N$ only depending on $n$, such that for all $\Gamma \in \mathfrak{A}_n$, the order of any finite subgroup $G$ of $\Gamma$ is bounded by $\eta$, i.e. $|G| \leq \eta$.

\item There is a constant $\nu = \nu(n) > 0$ only depending on $n$, such that for all $\Gamma \in \mathfrak{A}_n$ and every hyperbolic $\gamma \in \Gamma$, the minimal displacement of $\gamma$ is at least $\nu$, i.e. $d_{\gamma}(x) \geq \nu$ for all $x \in \H^n$.
\end{enumerate}
\end{lem}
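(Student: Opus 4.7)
The plan is to reduce both parts to classical bounds for integer matrix groups, exploiting the tight arithmetic structure of $\mathfrak{A}_n$. The key initial input, which I would set up once and use for both statements, is that by the Margulis arithmeticity theorem together with Godement's compactness criterion, a non-uniform arithmetic lattice $\Gamma \in \mathfrak{A}_n$ is necessarily defined over $\Q$: any proper totally real extension $K \supsetneq \Q$ would force compactness at an extra archimedean place, contradicting non-uniformity. Hence, up to commensurability and conjugation, $\Gamma$ is contained in $\mathrm{O}(f; \Z)$ for an isotropic rational quadratic form $f$ of signature $(n,1)$, giving a faithful representation $\rho \colon \Gamma \hookrightarrow \mathrm{GL}_N(\Q)$ with $N = n+1$ depending only on $n$.

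For Part~1, take a finite subgroup $F < \Gamma$. Averaging the standard positive definite form on $\Q^N$ over $F$ produces an $F$-invariant lattice in $\Q^N$, so $\rho(F)$ is conjugate inside $\mathrm{GL}_N(\Q)$ to a subgroup of $\mathrm{GL}_N(\Z)$. Minkowski's classical theorem -- reduction modulo $3$ is torsion-free on $\mathrm{GL}_N(\Z)$ -- then yields $|F| \leq |\mathrm{GL}_N(\mathbb{F}_3)| =: \eta(n)$.

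For Part~2, a hyperbolic $\gamma \in \Gamma$ has displacement $\ell(\gamma) = 2\log\lambda$ with $\lambda > 1$ the largest eigenvalue of $\rho(\gamma)$; the other eigenvalues are $\lambda^{-1}$ together with $N-2$ roots of modulus $1$. To obtain genuine integrality for an arbitrary $\gamma \in \Gamma$ (not merely for $\gamma$ in some fixed finite-index integral subgroup, whose commensurability index may depend on $\Gamma$), I would pass to the adjoint representation $\mathrm{Ad} \colon \Isom(\H^n)^{\circ} \to \mathrm{GL}(\mathfrak{g})$: for any arithmetic lattice, the image $\mathrm{Ad}(\Gamma)$ preserves a canonical $\Z$-form of $\mathrm{Aut}(\mathfrak{g})$ depending only on the Lie algebra $\mathfrak{g} = \mathfrak{so}(n,1)$, so the characteristic polynomial of $\mathrm{Ad}(\gamma)$ lies in $\Z[x]$. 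Factor it as $p(x) = (x-1)^a (x+1)^b q(x)$ with $q(\pm 1) \neq 0$. Gauss's lemma forces $q \in \Z[x]$, hence $|q(1)| \geq 1$. The eigenvalues of $\mathrm{Ad}(\gamma)$ are of the form $e^{\pm\ell(\gamma)} \cdot u$ or $u$ with $|u| = 1$; pairing complex conjugate eigenvalues, unit-circle factors contribute at most $4$ each to $|q(1)|$, while factors carrying $e^{\pm\ell(\gamma)}$ are controlled in terms of $\ell(\gamma)$. Combining yields a lower bound of the form $(2\cosh\ell(\gamma)-2) \cdot R \geq c(n)$ with $R$ uniformly bounded in $n$ for small $\ell(\gamma)$, hence $\ell(\gamma) \geq \nu(n) > 0$.

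The main obstacle is the careful bookkeeping in Part~2: the ``mixed'' eigenvalues $\mu e^{i\theta}$ with $\mu > 1$ individually can be close to $1$, so one must confirm that the joint integrality constraint on the characteristic polynomial (e.g.\ integrality of $4\cos\theta \cosh\ell(\gamma)$ and of $2\cosh 2\ell(\gamma) + 4\cos^2\theta$ in the simplest case) prevents all such factors from simultaneously being small unless $\ell(\gamma)$ itself is bounded below. A cleaner alternative would be to invoke the classically known uniform systole bounds for non-uniform arithmetic hyperbolic orbifolds, which are exactly the content of Part~2.
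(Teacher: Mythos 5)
The paper's own proof is a two-line citation to Gelander's \emph{Homotopy Type and Volume of Locally Symmetric Manifolds} (Lemma~13.1 and Remark~5.7), so you are taking a genuinely different route by arguing from scratch. The global structure of your argument is sound: reduce to the case $k=\Q$ via Godement, realize $\Gamma$ inside $\mathrm{GL}_N(\Q)$ via the commensurator, and then exploit integrality. For Part~1 this works cleanly: a finite subgroup of $\mathrm{GL}_N(\Q)$ stabilizes a lattice (sum the $F$-translates of a fixed lattice, not ``average a form'' as you wrote) and Minkowski applies. One caveat worth spelling out is that ``defined over $\Q$'' and ``contained in $\mathrm{O}(f;\Z)$ up to commensurability'' needs unpacking: what you actually use is $\Gamma \subset G(\Q) = \mathrm{Comm}_G(G(\Z))$ (Borel--Margulis), since commensurability alone would only control a finite-index subgroup of $\Gamma$, and the index is not uniform in $\Gamma$. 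You should also note that the quaternion-algebra and triality-type arithmetic lattices in $\mathrm{O}(n,1)$ are all cocompact, which is what justifies restricting to the quadratic-form case for $\mathfrak{A}_n$.

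Part~2 has a genuine gap in the form stated. There is no ``canonical $\Z$-form of $\mathrm{Aut}(\mathfrak{g})$ depending only on $\mathfrak{g}$'' that is preserved by $\mathrm{Ad}(\Gamma)$ for every arithmetic $\Gamma$; different lattices preserve genuinely different $\Z$-lattices in $\mathfrak{g}_{\Q}$. The correct (and recoverable) statement is: since $[\Gamma : \Gamma\cap G(\Z)] < \infty$, the finite sum $\Lambda := \sum_i \mathrm{Ad}(\gamma_i)\mathfrak{g}_{\Z}$ over coset representatives $\gamma_i$ is an $\mathrm{Ad}(\Gamma)$-invariant lattice, hence $\mathrm{Ad}(\gamma)\in\mathrm{GL}(\Lambda)\cong\mathrm{GL}_{\dim\mathfrak g}(\Z)$ and its characteristic polynomial lies in $\Z[x]$. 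This lattice depends on $\Gamma$, but the conclusion (integrality of the characteristic polynomial) is all you need, since the degree $\dim\mathfrak{g}=\binom{n+1}{2}$ depends only on $n$. Once integrality is in hand, the bookkeeping you are worried about is in fact easy and you should not punt to a citation: write $p(x)=(x-1)^a(x+1)^b q(x)$ with $q(\pm1)\neq0$ and $q\in\Z[x]$; then $q(1)$ is a nonzero integer, so $|q(1)|\geq1$. On the other hand, $q(1)$ contains the factor $(1-e^{2\ell})(1-e^{-2\ell})=2-2\cosh(2\ell)$ (note the translation length enters doubled here), which tends to $0$ as $\ell\to0$, while every other factor of $q(1)$ has modulus at most $1+|\lambda_i|\leq 1+e^{2\ell}$ and there are at most $\binom{n+1}{2}-2$ of them. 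Thus for $\ell$ below a threshold $\nu(n)$ depending only on $n$ one would get $|q(1)|<1$, a contradiction. You do not need joint cancellation among the mixed factors; an upper bound on each suffices.
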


\begin{proof}
These statements are straightforward consequences of \cite{Gelander} Lemma 13.1 and \cite{Gelander} Remark 5.7.
\end{proof}

We will now state the main result in its general form; recall that a pair $(S,S')$ with a simplicial complex $S$ and (possibly empty) subcomplex $S' \subseteq S$ is an \textbf{$(A,B)$-simplicial pair}, if $S$ has at most $B$ vertices and the degree at every vertex is bounded by $A$.

\begin{satz}\label{Satz:Hauptresultat_beschraenkt_komplizierter_Simplizialkomplex}
As usual, let $\eta \in \N$ be an upper bound on the order of finite subgroups of $\Gamma$ and $\nu > 0$ be a lower bound on the displacement of hyperbolic elements of $\Gamma$. Then there are constants $C = C(n,\eta,\nu)$ and $D = D(n,\nu)$, such that $(M_+, \partial M_+)$ is as a pair homotopy equivalent to a $(D, C \cdot \Vol(M))$-simplicial pair. For $\Gamma \in \mathfrak{A}_n$ (i.e. $\Gamma$ is arithmetic, non-uniform) the constants $C$ and $D$ will only depend on the dimension $n$.
\end{satz}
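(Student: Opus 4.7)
The theorem is essentially the combination of the three preceding lemmas, so the plan is to produce the required simplicial pair as the nerve pair $(N(\mathcal{B}), N(\mathcal{B}_g))$ coming from the covering constructed in this section, and then to read off the numerical bounds from the geometric estimates already established.

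First I would invoke Lemma \ref{Lem:Dicker_Teil_homotopieaequivalent_zu_Simplizialkomplex}, which supplies a homotopy equivalence of pairs
\[
(M_+, \partial M_+) \simeq (N(\mathcal{B}), N(\mathcal{B}_g)).
\]
After this step the theorem reduces to verifying that $(N(\mathcal{B}), N(\mathcal{B}_g))$ is a $(D, C\cdot\Vol(M))$-simplicial pair in the sense defined just before the statement. The vertices of $N(\mathcal{B})$ are, by the very definition of the nerve, in bijection with the elements of $\mathcal{B}$, so part~(1) of Lemma \ref{Lem:Ueberdeckung_Anzahl_und_Schnitte} bounds the number of vertices of $N(\mathcal{B})$ by $C\cdot \Vol(M)$ with $C = C(n,\eta,\nu)$. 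The degree of a vertex $v \in N(\mathcal{B})$ is the number of $1$-simplices incident to $v$, which again by definition of the nerve equals the number of sets in $\mathcal{B}$ distinct from the set $B_v$ corresponding to $v$ that meet $B_v$; part~(2) of Lemma \ref{Lem:Ueberdeckung_Anzahl_und_Schnitte} bounds this by $D = D(n,\nu)$, uniformly in $v$. This establishes the general statement.

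For the arithmetic non-uniform case, I would simply substitute the bounds from Lemma \ref{Lem:Arithmetisch_nicht-uniform_gute_Konstanten} into the expressions for $C$ and $D$: since for $\Gamma \in \mathfrak{A}_n$ one may take $\eta = \eta(n)$ and $\nu = \nu(n)$, the resulting constants $C$ and $D$ depend only on the dimension $n$.

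There is no real obstacle at this step; the genuinely hard work, namely constructing a good cover whose images are contractible with contractible intersections (Lemmas \ref{Lem:Ueberdeckung_Kugeln_zusammenziehbar} and \ref{Lem:Ueberdeckung_Durchschnitte_zusammenziehbar}), proving that the cover is compatible with the flow to the thick part (Lemma \ref{Lem:Ueberdeckung_ist_stabil}), controlling its cardinality and multiplicity linearly in the volume (Lemma \ref{Lem:Ueberdeckung_Anzahl_und_Schnitte}), and lifting the nerve-lemma homotopy equivalence to pairs (Lemma \ref{Lem:Dicker_Teil_homotopieaequivalent_zu_Simplizialkomplex}), has already been done. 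The only small point worth double-checking is that the notion of ``degree'' used in the simplicial-pair terminology matches the number-of-neighbors count produced by the nerve construction, but this is immediate from the standard definitions.
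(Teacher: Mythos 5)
Your argument matches the paper's proof exactly: the theorem is obtained by combining Lemma \ref{Lem:Dicker_Teil_homotopieaequivalent_zu_Simplizialkomplex} with Lemma \ref{Lem:Ueberdeckung_Anzahl_und_Schnitte}, and additionally Lemma \ref{Lem:Arithmetisch_nicht-uniform_gute_Konstanten} in the arithmetic, non-uniform case. Your spelled-out check that the nerve's vertex count and vertex degrees are controlled by those lemmas is exactly the intended reading and is correct.
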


\begin{proof}
This is a combination of Lemma \ref{Lem:Dicker_Teil_homotopieaequivalent_zu_Simplizialkomplex} and \ref{Lem:Ueberdeckung_Anzahl_und_Schnitte}, as well as \ref{Lem:Arithmetisch_nicht-uniform_gute_Konstanten} in the arithmetic, non-uniform case.
\end{proof}

\begin{bem}\label{Bem:Diskussion_Abhaengigkeit_Konstanten_von_eta_nu}
Using the formulas given for the construction of the constant $C$ of Theorem \ref{Satz:Hauptresultat_beschraenkt_komplizierter_Simplizialkomplex}, it can be shown that $C = C(n,\eta,\nu)$ grows exponentially in $\eta$. The dependency on $\nu$ can not be deduced that easily, as it was also used for the definition of $\mu_{-1}$, and thus its influence would have to traced along the iterative construction of all the $\mu_i$ up to $\mu_n$.
\end{bem}

\section{Applications}\label{Kapitel:Anwendungen}

Using the main result Theorem \ref{Satz:Hauptresultat_beschraenkt_komplizierter_Simplizialkomplex} (and its notation), bounds on the homology of hyperbolic orbifolds are an immediate consequence.

\begin{satz}\label{Satz:Freie_Homologie_Schranke}
Let $\K$ be an arbitrary field and let $b_k(M;\K) = \dim_{\K} H_k(M;\K)$ denote the $k$-th Betti number of $M$ with coefficients in $\K$. Then there is a constant $E = E(n, \eta, \nu) > 0$ such that
\[
b_k(M;\K) \leq E \cdot \Vol(M)
\]
for all $k\in\N_0$. In the arithmetic, non-uniform case, the constant $E$ will only depend on the dimension $n$.
\end{satz}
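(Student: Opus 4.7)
The plan is to deduce this as a direct corollary of Theorem \ref{Satz:Hauptresultat_beschraenkt_komplizierter_Simplizialkomplex} via the standard simplicial bound on Betti numbers, with the observation that $M$ itself is homotopy equivalent to $M_+$ under our choice of thick-thin decomposition.

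First, I would invoke the remark after Corollary \ref{Kor:Dick_duenn_Zerlegung_Orbifaltigkeit_gute_Niveaus}: since we always take the Margulis $\epsilon$ to be at most $\nu$, the thin part $M_-$ consists solely of cusps, and thus $M$ deformation retracts onto its thick part $M_+$. In particular $H_k(M;\K) \cong H_k(M_+;\K)$ for every $k$ and every coefficient field $\K$. By Theorem \ref{Satz:Hauptresultat_beschraenkt_komplizierter_Simplizialkomplex} there is a simplicial pair $(S, S')$ homotopy equivalent to $(M_+, \partial M_+)$, where $S$ has at most $C \cdot \Vol(M)$ vertices and each vertex has degree at most $D$, with $C = C(n,\eta,\nu)$ and $D = D(n,\nu)$. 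In particular $H_k(M;\K) \cong H_k(S;\K)$.

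Second, I would convert the bounds on vertex number and vertex degree into a bound on the total number of $k$-simplices. A simplex of dimension $k$ is determined by a vertex together with a choice of $k$ further vertices among its neighbours, so the number of $k$-simplices containing a fixed vertex is at most $\binom{D}{k}$. Summing over the at most $C \cdot \Vol(M)$ vertices and dividing by $k+1$ to avoid multi-counting, the number $s_k(S)$ of $k$-simplices satisfies
\[
s_k(S) \leq \frac{1}{k+1} \binom{D}{k} \cdot C \cdot \Vol(M).
\]
Note also that $s_k(S) = 0$ for $k > D$, so only finitely many degrees contribute.

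Third, I would apply the elementary bound $b_k(S;\K) = \dim_{\K} H_k(S;\K) \leq \dim_{\K} C_k(S;\K) = s_k(S)$, valid for every coefficient field $\K$. Setting
\[
E := \max_{0 \leq k \leq D} \frac{1}{k+1} \binom{D}{k} \cdot C,
\]
we obtain $b_k(M;\K) = b_k(S;\K) \leq E \cdot \Vol(M)$ for all $k \in \N_0$. Since $C$ and $D$ depend only on $n, \eta, \nu$, so does $E$; in the arithmetic, non-uniform case, Lemma \ref{Lem:Arithmetisch_nicht-uniform_gute_Konstanten} eliminates the dependence on $\eta$ and $\nu$, leaving $E = E(n)$. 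There is no real obstacle here: the work has all been done in the construction of the efficient simplicial model, and this proof is a routine bookkeeping of that data.
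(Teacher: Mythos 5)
Your proof is correct, and it takes a genuinely different route from the one the paper follows. The paper deduces the Betti number bound from Theorem \ref{Satz:Hauptresultat_beschraenkt_komplizierter_Simplizialkomplex} via a Mayer--Vietoris argument (outsourced to \cite{SenskaVis} Theorem 4.11 and \cite{SenskaDissertation} Satz 3.59), arriving at the explicit constant $E = (D^{n-1} + D^n + 1)\cdot C$. You instead bound the homology of the simplicial model $S$ directly: since a $k$-simplex is spanned by $k+1$ mutually adjacent vertices, the degree bound $D$ forces $s_k(S) \leq \frac{1}{k+1}\binom{D}{k}\cdot C\cdot\Vol(M)$ and $s_k(S)=0$ for $k>D$, whence $b_k(M;\K) = b_k(S;\K) \leq \dim_\K C_k(S;\K) = s_k(S)$. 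Both arguments are valid; yours is more elementary and self-contained (no Mayer--Vietoris machinery or external reference needed), works verbatim for every coefficient field because the chain-group bound is purely linear-algebraic, and in fact gives a slightly tighter constant $E$ than the one stated in the paper. The only thing worth making explicit is the reduction $H_k(M;\K)\cong H_k(M_+;\K)$, which you correctly attribute to the choice of thick-thin decomposition (Margulis $\epsilon$ taken $\leq \nu$, so $M_-$ consists only of cusps and retracts onto $\partial M_+$); the paper relies on the same fact implicitly. All in all, the heavy lifting is Theorem \ref{Satz:Hauptresultat_beschraenkt_komplizierter_Simplizialkomplex}, and your bookkeeping of its output is sound.
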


\begin{proof}
With our main result Theorem \ref{Satz:Hauptresultat_beschraenkt_komplizierter_Simplizialkomplex}, the proof is a standard argument utilizing the Mayer-Vietoris sequence (see e.g. \cite{SenskaVis} Theorem 4.11 or \cite{SenskaDissertation} Satz 3.59); $E$ will be given by $E := (D^{n-1} + D^n + 1) \cdot C$.
\end{proof}

The similar result for the torsion part of the homology is given as follows.

\begin{satz}\label{Satz:Torsion_Homologie_Schranke}
There is a constant $F = F(n, \eta, \nu) > 0$ such that
\[
\log | \tors H_k(M;\Z) | \leq F \cdot \Vol(M)
\]
for all $k\in\N_0$. In the arithmetic, non-uniform case, the constant $F$ will only depend on $n$.
\end{satz}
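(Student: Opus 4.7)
My plan is to feed the efficient simplicial pair produced by Theorem \ref{Satz:Hauptresultat_beschraenkt_komplizierter_Simplizialkomplex} into a standard Smith-normal-form estimate. By the remark closing Section \ref{Kapitel:Dick-duenn-Zerlegung}, $M$ is homotopy equivalent to $M_+$, and Theorem \ref{Satz:Hauptresultat_beschraenkt_komplizierter_Simplizialkomplex} then gives a simplicial complex $S$ homotopy equivalent to $M_+$ with at most $V := C \cdot \Vol(M)$ vertices, each of degree at most $D$; here $C = C(n, \eta, \nu)$ and $D = D(n, \nu)$ (both depending only on $n$ in the arithmetic, non-uniform case, by Lemma \ref{Lem:Arithmetisch_nicht-uniform_gute_Konstanten}). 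Consequently $H_k(M;\Z) \cong H_k(S;\Z)$ and it suffices to bound the torsion on the right.

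From the bounded vertex degree, a standard combinatorial count shows that the number $n_k$ of $k$-simplices of $S$ satisfies $n_k \leq D^k \cdot V$, and that $\dim S \leq D$ (so $H_k(S;\Z) = 0$ for $k > D$). In particular only the finitely many degrees $0 \leq k \leq D$ require attention, and each chain group $C_k(S;\Z)$ is free abelian of rank at most $D^k \cdot C \cdot \Vol(M)$.

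For such $k$, the matrix of the simplicial boundary $\partial_{k+1}: C_{k+1}(S;\Z) \to C_k(S;\Z)$ has entries in $\{-1, 0, +1\}$ and exactly $k+2$ nonzero entries per column, so every column has Euclidean norm $\sqrt{k+2}$. Writing $d_1 \mid d_2 \mid \cdots \mid d_r$ for its nonzero elementary divisors (where $r = \rank \partial_{k+1}$), the Smith-normal-form identity $d_1 d_2 \cdots d_r = \gcd\{r \times r\text{-minors of } \partial_{k+1}\}$ combined with Hadamard's inequality applied to any nonzero $r \times r$-minor yields
\[
\prod_{i=1}^{r} d_i \,\leq\, \bigl(\sqrt{k+2}\,\bigr)^{r}.
\]
Since $\tors H_k(S;\Z)$ embeds into the torsion subgroup of $C_k(S;\Z) / \im \partial_{k+1}$, whose order is exactly $\prod_i d_i$, we obtain
\[
\log |\tors H_k(M;\Z)| \,\leq\, \tfrac{1}{2} r \log(k+2) \,\leq\, \tfrac{1}{2} D^k C \log(k+2) \cdot \Vol(M).
\]

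Setting $F := \max_{0 \leq k \leq D} \tfrac{1}{2} D^k C \log(k+2)$ finishes the proof; by construction $F = F(n, \eta, \nu)$, and the arithmetic, non-uniform case inherits from Theorem \ref{Satz:Hauptresultat_beschraenkt_komplizierter_Simplizialkomplex} and Lemma \ref{Lem:Arithmetisch_nicht-uniform_gute_Konstanten} that $F$ then depends only on $n$. No single step poses serious difficulty once Theorem \ref{Satz:Hauptresultat_beschraenkt_komplizierter_Simplizialkomplex} is in hand; the only point requiring minor care is the verification that the vertex-degree bound controls the number of $k$-simplices across all dimensions $k$, which is a routine combinatorial count using that a $k$-simplex through a given vertex is determined by its remaining $k$ neighbours among the at most $D$ adjacent ones.
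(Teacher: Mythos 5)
Your proposal is correct and follows the same route as the paper: reduce to the efficient simplicial pair from Theorem~\ref{Satz:Hauptresultat_beschraenkt_komplizierter_Simplizialkomplex}, then bound torsion via the bounded-degree combinatorics of the complex. The only difference is that the paper delegates the final step to Lemma~5.2 of~\cite{Sauer}, whereas you unpack that lemma explicitly (counting $k$-simplices by the degree bound, then applying the Smith-normal-form identity together with Hadamard's inequality to the columns of $\partial_{k+1}$); this yields a constant of the same shape as the paper's $F = D^n \log(n+2)\cdot C$, merely with $n$ replaced by the a priori cruder bound $\dim S \le D$, which is harmless since both depend only on $(n,\eta,\nu)$.
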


\begin{proof}
By the main result Theorem \ref{Satz:Hauptresultat_beschraenkt_komplizierter_Simplizialkomplex}, $M \simeq M_+$ is homotopy equivalent to a $(D, C \cdot \Vol(M))$-simplicial complex. Now \cite{Sauer} Lemma 5.2 yields the result. Note that -- following the proof of \cite{Sauer} Lemma 5.2 and assuming $D \geq 1$ (without restriction) -- the constant $F$ will be given by $F := D^n \cdot \log(n+2) \cdot C$.
\end{proof}

As we see from the previous two theorems, the constants for the homology bounds depend on the dimension $n$, the maximal order $\eta$ of finite subgroups of the lattice and the minimal hyperbolic displacement $\nu$. Using a more general approach than the one presented in this paper, the dependence on $\nu$ can be relaxed -- yielding bounds independent of $\nu$, but polynomial in $\Vol(M)$ -- and we suspect that it might be removed altogether. This more general approach basically consists of replicating the construction of the stretched balls also near the tubes (which we basically ignored by putting them into the thick part, using $\nu$); a detailed description of this procedure and the corresponding proofs are contained in \cite{SenskaDissertation} and might be a starting point for future improvements of the results given here.

\end{document}